\documentclass[a4paper, reqno,10pt]{amsart}

\usepackage{latexsym, amsfonts, amsmath, amsthm, amssymb}

\usepackage{graphics, graphicx}
\usepackage{a4wide}
\usepackage{tikz}

\newtheorem{thm}{Theorem}
\newtheorem{lemma}{Lemma}
\newtheorem{cor}{Corollary}
\newtheorem{prop}{Proposition}

\newtheorem{defin}{Definition}
\newtheorem{rem}{Remark}

\newcommand{\N}{\mathbb{N}}
\newcommand{\Z}{\mathbb{Z}}
\newcommand{\R}{\mathbb{R}}

\newcommand{\hs}{\overline{H}}
\newcommand{\eps}{\varepsilon}
\newcommand{\dotcup}{\,\ensuremath{\mathaccent\cdot\cup}\,}

%\numberwithin{equation}{section}

\makeatletter
\def\@tocline#1#2#3#4#5#6#7{\relax
  \ifnum #1>\c@tocdepth % then omit
  \else
    \par \addpenalty\@secpenalty\addvspace{#2}%
    \begingroup \hyphenpenalty\@M
    \@ifempty{#4}{%
      \@tempdima\csname r@tocindent\number#1\endcsname\relax
    }{%
      \@tempdima#4\relax
    }%
    \parindent\z@ \leftskip#3\relax \advance\leftskip\@tempdima\relax
    \rightskip\@pnumwidth plus4em \parfillskip-\@pnumwidth
    #5\leavevmode\hskip-\@tempdima
      \ifcase #1
       \or\or \hskip 1em \or \hskip 2em \else \hskip 3em \fi%
      #6\nobreak\relax
    \dotfill\hbox to\@pnumwidth{\@tocpagenum{#7}}\par
    \nobreak
    \endgroup
  \fi}
\makeatother

\makeatletter
\ifcsname phantomsection\endcsname
    \newcommand*{\qrr@gobblenexttocentry}[5]{}
\else
    \newcommand*{\qrr@gobblenexttocentry}[4]{}
\fi
\newcommand*{\addsubsection}{%
    \addtocontents{toc}{\protect\qrr@gobblenexttocentry}%
    \subsection}
\makeatother

\title{On Uniqueness of Weak Solutions for the Thin-Film Equation}
\author{Dominik John}
\address{Mathematisches Institut, Universit\"at Bonn, Endenicher Allee 60, 53115 Bonn, Germany}

\begin{document}

\begin{abstract}
In any number of space variables, we study the Cauchy problem related to the fourth order degenerate diffusion equation $\partial_s h + \nabla \!\cdot\! (h^m \nabla\Delta h)=0$ in the simplest case of a linearly degenerate mobility $m=1$. This equation, derived from a lubrication approximation, also models the surface tension dominated flow of a thin viscous film in the Hele-Shaw cell. Our focus is on uniqueness of weak solutions in the ``complete wetting'' regime, when a zero contact angle between liquid and solid is imposed. In this case, we transform the problem by zooming into the free boundary and look at small Lipschitz perturbations of a quadratic stationary solution. In the perturbational setting, the main difficulty is to construct scale invariant function spaces in such a way that they are compatible with the structure of the nonlinear equation. Here we rely on the theory of singular integrals in spaces of homogeneous type to obtain linear estimates in these functions spaces which provide ``optimal'' conditions on the initial data under which a unique solution exists. In fact, this solution can be used to define a class of functions in which the original initial value problem has a unique (weak) solution. Moreover, we show that the interface between empty and wetted regions is an analytic hypersurface in time and space.
\end{abstract}

\maketitle
\tableofcontents
\thispagestyle{empty}

%%%%%%%%%%%%%%%%%%%%%%%%%%%%%%%%%%%%%%%%%%%%%%%%%%%%%%%%%%%%

\section{Introduction} % introduction
\label{introduction}

\subsection{The model} % the model
\label{model}
In this paper, we will be discussing the Cauchy problem related to the fourth order degenerate equation
\begin{equation}
\label{tfe}
\tag{tfe}
\partial_s h + \nabla \!\cdot\! \bigl( h^m \, \nabla\Delta \, h \bigr) \, = \, 0 \, .
\end{equation}
Here $h: [0,\infty) \times \R^n \to \R$ is a real-valued function of time and space, the gradient and the Laplacian are in the space variables only, and $m>0$.

Possibly the simplest context in which equation (\ref{tfe}) applies is that of a thin liquid film spreading along a solid surface. For such problems, a lubrication approximation\footnote{The reference \cite{ODB97} and the references therein provide a comprehensive discussion on this.} simplifies the Navier-Stokes equations to
\[
\mu \, \partial_s h \, + \, \gamma \; \nabla \!\cdot\! \Bigl( \bigl( \frac{1}{3} \, h^3 + h^2 \, k(h) \bigr) \, \nabla\Delta \, h \Bigr) \, = \, 0 \, ,
\]
relating the fluid's velocity in the horizontal direction to its thickness and shape at the liquid-vapor interface. Here the constants $\mu$ and $\gamma$ denote the viscosity and the surface tension, respectively. Moreover, we assume that the mobility coefficient $k$ is a nonnegative function of $h$, vanishes at zero and has at most polynomial growth (say of order $m-2$). A weighted slip condition of the form
\[
v \, = \, k(h) \, \partial_z v \qquad \text{if} \; z=0 \, ,
\]
demanding that the horizontal fluid velocity $v$ is proportional to its normal derivative ($z$ is the coordinate perpendicular to the surface), entails $k(h) = b^{3-m} \, h^{m-2}$. For films sufficiently thinner than the slip length, i.e.\ $h \ll b$, the term $h^3$ can be neglected and equation (\ref{tfe}) holds for the dimensionless variables
\[
\hat{h} \, = \, \frac{h}{Z} \, , \; \hat{y} \, = \, \frac{y}{Y} \quad \text{and} \quad \hat{s} \, = \, \frac{\mu \, Z^m}{\gamma \, Y^4} \, b^{m-3} \, s \, ,
\]
with $Z>0$ being a characteristic film thickness and $Y>0$ a typical horizontal length scale. The underlying specific model determines the slip length $b \geq 0$ and the growth exponent $m$. Basically, one distinguishes two qualitatively different cases, weaker slippage ($m \in (2,3)$) and stronger slippage ($m \in (0,2)$). For the borderline case $m=2$, we recover the classical Navier slip condition. If $m=0$, then (\ref{tfe}) becomes non-degenerate at $\{ h=0 \}$, leading to the phenomenon of infinite speed of propagation. On the other hand, for $m \geq 3$, the support is expected to be constant in time (no slip). Indeed, the behavior of solutions of (\ref{tfe}) is subject to changes in the parameter regime $m \in (0,3)$.

We briefly compare the fourth-order degenerate diffusion equation (\ref{tfe}) to its second-order analogue
\begin{equation}
\label{pme}
\tag{pme}
\partial_s h \, = \, \Delta (h^m) \, , \quad m>1 \, ,
\end{equation}
the well-known porous medium equation. Both equations (\ref{tfe}) and (\ref{pme}) can be expressed by the conservation law
\[
\partial_s h + \nabla \!\cdot q(h) \, = \, 0 \, ,
\]
where $q(h)$ is the vector-valued flux of either an ideal gas in a porous medium or a thin layer of liquid deposited on a flat surface. In the absence of flows across the boundary of the support of $h$, that is
\[
q(h) \big\vert_{\partial \{ h>0 \}} \cdot \vec{\nu} \, = \, 0
\]
with $\vec{\nu}$ being the outer normal to $\partial \{ h>0 \}$, we expect conservation of mass: $\int_{\R^n} h(s,y) \, dy \equiv M$. For the model (\ref{pme}), a number of striking features are known by now including the following:
\begin{enumerate}
\item In regions of strictly positive $h$ solutions become instantaneously smooth.
\item There exists a maximum principle.
\item Compactly supported initial data generate compactly supported solutions at fixed times.
\item The Cauchy problem has a unique solution for a wide range of data including $L^1(\R^n)$.
\end{enumerate}
While the fact that both equations are diffusive guarantees that property (1) is preserved, a remarkable difference between (\ref{pme}) and (\ref{tfe}) is the lack of a maximum principle for the fourth-order equation. This can already be seen in the non-degenerate case $\partial_s h + \Delta^2 h = 0$, where solutions can change their sign. The remaining properties (3)--(4) have been subject of recent and ongoing study. We will discuss some examples in the following section.

A peculiarity of (\ref{tfe}) with linearly degenerate mobility $h$ is that its evolution can be understood as the gradient flow of the energy with respect to the $L^2$-Wasserstein metric. The corresponding energy is given by
\[
E(h) \, = \, \frac{1}{2} \int_{\{ h > 0 \}} ( |\nabla h|^2 + \theta^2) \, dy \, ,
\]
where the contact angle $\theta$ at the liquid-solid interface is determined by an equilibrium of surface energies. This is another similarity between the second-order and fourth-order equation since the porous medium equation also has gradient flow structure \cite{O01}.

\subsection{The background} % the background
\label{background}
The initial value problem related to (\ref{tfe}) admits the construction of nonnegative weak solutions - see Bernis and Friedman \cite{BF90}, and Bertozzi and Pugh \cite{BP96} for the case of one space dimension and Dal Passo et al.\ \cite{BDGG98,DGG98} for that one of multiple space dimensions in the parameter regime $m \in (\frac{1}{8}, 3)$.

Another remarkable observation is that property (3) holds for $m \in (0,3)$, that is, the support of solutions propagates with finite speed. In space dimension $n=1$, this has been established by Bernis \cite{B96a, B96b}, in the higher dimensional case it is derived in \cite{BDGG98, DGG98} for $m \in (0,2)$ and in \cite{G05} for $m \in [2,3)$.

More recent results on weak (or classical) solutions use the model to describe the dynamics of a moving interface in the context of either a complete wetting ($\theta=0$) or a partial wetting (fixed positive contact angle $\theta$). 

Giacomelli et al.\ \cite{GKO08} address the regularity theory for solutions in one space dimension with mobility $h$, when a zero contact angle is imposed. In particular, they study perturbations of a stationary solution near the free boundary and obtain maximal regularity for a linearized operator, and existence and uniqueness for the nonlinear problem. In fact, this solution is smooth up to the boundary of its support. For the Navier-slip model, a solution - a perturbation of a traveling wave - is actually non-smooth at the contact line, as can already be seen in the case of self-similar solutions (see \cite{GGO12}).

A treatment of the problem in the partial wetting regime may be found in \cite{O98, KM10}. Well-posedness under the classical Navier slip condition $m=2$ is discussed in \cite{K07}.

The methods used to prove results about the model proposed in (\ref{tfe}) rely on sophisticated energy arguments combined with weighted Sobolev estimates. In particular, entropy estimates play an important part in the development of an existence theory for weak solutions from nonnegative data. However, many questions are still unanswered. Probably the most important one lies in the classification of function classes in which (weak) solutions of (\ref{tfe}) are unique. On that subject, we refer to \cite{DGG98} for a counterexample.

\subsection{Outline} % outline
\label{outline}
In the present paper, we extend the results in \cite{GKO08} to the case of arbitrary space dimensions $n \in \N$ under conditions on the initial data that are optimal for the methods we use. The strategy is as follows: It is convenient to transform the evolution free boundary problem formulated in (\ref{tfe}) to an equivalent problem with fixed domain by formally interchanging the roles of the independent variable $y_n$ and the dependent variable $h$, a technique commonly known as von Mises transformation. In fact, this is the first time one applies this kind of transformation to a fourth-order parabolic equation. Using $(t, x)$ to denote the new independent variables and $u=u(t,x)$ to denote the dependent variable, we can write (\ref{tfe}) as the following initial value problem\footnote{All the nonlinear terms are collected in the inhomogeneity $f[u]$ - its precise form will be discussed in Lemma \ref{nonlinearity}.}:
\begin{equation}
\label{perturbation equation}
\tag{pe}
\partial_t u \, + \, Lu \, = \, f[u] \qquad \text{on} \quad (0,T) \times H \, ,
\end{equation}
with linear spatial part
\[
Lu \, = \, x_n^{-1} \Delta(x_n^{\,3} \, \Delta u) \, - \, 4 \, \Delta_{\R^{n-1}}u \, .
\]

In section \ref{transformation}, we will be concerned with the transformation of the equation onto the upper half space $\{ x_n > 0 \}$, here and in the following denoted by $H$.

Section \ref{linear model case} is dedicated to the analysis of the linear equation, where for the moment we ignore the dependency of $f$ on $u$. In particular, we prove a series of linear estimates for solutions using a rather weak solution concept on relatively open subsets of $\hs$. In the further course of section 3, we will be discussing a Gaussian estimate for the Green kernel and some of its useful consequences, allowing us to construct a solution of (\ref{perturbation equation}) via a fixed point argument, as discussed in section 4. Indeed, under very weak regularity assumptions on the initial data $g$, this solution is unique in an appropriate Banach space. Moreover, we use an argument introduced by Angenent \cite{A90}, and later improved by of Koch and Lamm \cite{KL12}, to obtain analyticity of the solution in time and all tangential directions up to the boundary of its support.

In section \ref{uniqueness, stability and regularity}, we prove the main result of this paper: The unique solution $u^*$ can be used to generate a solution $h$ of the thin-film equation on its positivity set $P(h) = \{ (s,y) \mid h(s,y) > 0\}$. It satisfies the identity
\begin{equation}
\label{weak solution of tfe}
\int_I \int_{\R^n} h \, \partial_s \varphi \, + \, h \, \nabla\Delta h \cdot\! \nabla\varphi \, dy ds \, = \, 0
\end{equation}
for all $\varphi \in C_c^{\infty}\bigl( (0,T) \times \R^n \bigr)$, that is, it is a weak solution of (\ref{tfe}). In fact, the solution obtained in this manner is unique and the expression
\[
\bigl[ \sqrt{h}\, \bigr]_{X_p^1} \, = \! \sup_{{s \in (0,T)}\atop{y\in P_s(h)}} \, |Q_{\!\sqrt[4]{s}\,}(y)|^{-\frac{1}{p}} \!\!\!\!\! \sum_{(j, l, \alpha) \in \mathcal{CZ}} \!\!\!\!\!\!\! \sqrt[4]{s}^{\,4l+|\alpha|-1} (\sqrt[4]{s}+\sqrt[4]{h(s,y)}\,)^{|\alpha|-2j-1} \| \sqrt{h}^{\,j} \partial_s^l \partial_y^\alpha \, \sqrt{h} \|_{L^p(Q_{\!\sqrt[4]{s}\,}(y))}
\]
is finite, where $(j, l, \alpha) \in \mathcal{CZ}$ means that the triple is admissible in a sense to be specified in (\ref{CZ exponents}). \\

\begin{thm}
\label{uniqueness}
Suppose $T>0$ and $\eps>0$ is sufficiently small. Given an initial datum $h(0)=h_0$ with
\[
|\nabla_{\!y}\sqrt{h_0(y)} - e_n| \, < \, \eps \, ,
\]
there exist a constant $c>0$ and a unique weak solution $h^* \in C\bigl( (0,T) \times \R^n \bigr)$ of (\ref{tfe}) with initial value $h_0$,
\[
\| \nabla_{\!y} \sqrt{h^*} - e_n \|_{L^{\infty}(P(h))} + \bigl[ \sqrt{h^*} \, \bigr]_{X_p^1} \, \leq \, c \, \eps \, ,
\]
and $h^*$ satisfies the equation (\ref{weak solution of tfe}). Moreover, the level sets of $h^*$ are analytic. In particular, this holds for the interface $\partial P(h^*)$ between empty and occupied regions.
\end{thm}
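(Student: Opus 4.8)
The plan is to obtain Theorem~\ref{uniqueness} as a translation, via the von Mises transformation of Section~\ref{transformation}, of the well-posedness and regularity statement for the fixed-domain problem (\ref{perturbation equation}) established in Sections~\ref{linear model case} and~\ref{uniqueness, stability and regularity}. The first step is to record that the hypothesis $|\nabla_y\sqrt{h_0(y)}-e_n|<\eps$ is precisely the condition ensuring that $y\mapsto\sqrt{h_0(y)}$ is a small Lipschitz perturbation of the linear profile $y\cdot e_n$, hence a bi-Lipschitz change of variables in the last coordinate. Applying the transformation of Section~\ref{transformation} to $h_0$ then produces an initial datum $g$ for (\ref{perturbation equation}) on $H$ that is $O(\eps)$-small in the relevant trace space (the ``optimal'' space identified in Section~\ref{linear model case}); one must check that the nonlinear map $h_0\mapsto g$ is well-defined and Lipschitz from the $\eps$-ball in the $\nabla\sqrt{\,\cdot\,}$-Lipschitz topology into that trace space. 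Conversely, any $u$ solving (\ref{perturbation equation}) with the correct smallness bounds can be transformed back to a function $h$ on its positivity set.

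The second step invokes the fixed-point construction of Section~\ref{linear model case}/Section~4: given the small datum $g$, there is a unique $u^*$ in the scale-invariant Banach space (the one whose norm controls $[\,\cdot\,]_{X_p^1}$ together with the $L^\infty$ Lipschitz bound) solving $\partial_t u+Lu=f[u]$, with $\|u^*\|\leq c\,\eps$. Transforming back, I get $h^*\in C((0,T)\times\R^n)$ with
\[
\|\nabla_y\sqrt{h^*}-e_n\|_{L^\infty(P(h))}+\bigl[\sqrt{h^*}\,\bigr]_{X_p^1}\leq c\,\eps,
\]
where the passage from the norm of $u^*$ to the bracket $[\sqrt{h^*}]_{X_p^1}$ is again a bookkeeping exercise: the Carnot--Carath\'eodory-type exponents $\mathcal{CZ}$ and the weights $(\sqrt[4]{s}+\sqrt[4]{h})^{|\alpha|-2j-1}$ are designed to match, under the transformation, the parabolic scaling built into the norm on $u$. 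The third step is to verify that the function $h^*$ so produced is genuinely a weak solution in the sense of (\ref{weak solution of tfe}): one takes $\varphi\in C_c^\infty((0,T)\times\R^n)$, pulls it back under the von Mises change of variables, uses that $u^*$ satisfies (\ref{perturbation equation}) in the weak sense established in Section~\ref{linear model case}, and integrates by parts; the degeneracy at $\partial P(h^*)$ is harmless because $h\,\nabla\Delta h$ vanishes there in the appropriate sense, thanks to the regularity of $u^*$ up to $\partial H$.

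For uniqueness, the point is that \emph{any} weak solution $h$ of (\ref{tfe}) with initial datum $h_0$ satisfying the stated smallness bound transforms, on its positivity set, to a solution of (\ref{perturbation equation}) lying in the uniqueness class of Section~4; by the uniqueness there, it must coincide with $u^*$, hence $h=h^*$. The analyticity of the level sets, in particular of the interface $\partial P(h^*)$, follows from the analyticity of $u^*$ in $t$ and in the tangential variables $x'$ up to $\{x_n=0\}$ --- obtained via the Angenent/\cite{KL12} parameter-trick argument cited in Section~4 --- since in the von Mises picture the level set $\{h^*=0\}$ is exactly the graph $x_n=u^*(t,x',0)$ and the level set $\{h^*=\text{const}\}$ the graph $x_n=u^*(t,x',c)$, and a real-analytic function has a real-analytic graph. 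I expect the main obstacle to be the third step --- showing that the transformed $h^*$ satisfies the \emph{original} weak formulation (\ref{weak solution of tfe}) with test functions on all of $\R^n$ rather than just on $H$: one must control the behavior of $h^*$ and its flux near the free boundary carefully enough to justify the change of variables in the distributional identity and to see that no boundary terms on $\partial P(h^*)$ are generated, which is where the fine regularity of $u^*$ up to the boundary (and not merely its membership in the solution space) is essential.
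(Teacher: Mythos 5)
Your proposal follows essentially the same route as the paper: transform via the von Mises change of variables, invoke Theorem~\ref{well-posedness of the nonlinear problem} to obtain the unique $u^*$, transform back and verify the weak formulation (\ref{weak solution of tfe}) by careful test-function bookkeeping near $\partial P(h^*)$, obtain uniqueness by transforming any competitor back to a solution of (\ref{perturbation equation}) in the uniqueness class, and read off analyticity of level sets from Proposition~\ref{analyticity in t and x'}. One minor imprecision: the level set $\{h^*=\lambda\}$ is the graph of $v^*(t,x',\sqrt{\lambda})=\sqrt{\lambda}+u^*(t,x',\sqrt{\lambda})$ rather than of $u^*$ alone, though this does not affect the argument.
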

This implies large time stability of solutions that are initially close to the quadratic stationary solution, a possibly optimal result in terms of the regularity of the initial data. Moreover, these solutions are unique in the indicated class of functions and the (moving) interface is an analytic hypersurface in time and space.

To maintain a clear presentation, we defer most of the proofs and technical details to section \ref{proofs}.

%%%%%%%%%%%%%%%%%%%%%%%%%%%%%%%%%%%%%%%%%%%%%%%%%%%%%%%%%%%%

\section{Transformation onto a fixed domain} % transformation of the problem
\label{transformation}

To motivate the consideration of the equation (\ref{perturbation equation}), we will first compute the transformation of the thin-film equation (\ref{tfe}) when one interchanges independent and dependent variables near the boundary. As we suppose that any solution $h$ is nonnegative in its support this change of coordinates ensures that the original problem becomes a degenerate parabolic problem on a fixed domain, a method that has previously been used in the context of the porous medium equation (see \cite{DH98, K99}).

\subsection{Local coordinates} % von Mises
\label{local coords}
Assume that $h(s,y)$ is a solution of (\ref{tfe}) on $(0,T) \times \R^n$ with positivity set at  time $s$ given by
\[
P_s(h) \, = \, \{ y \mid h(s,y) > 0 \} \, \subsetneq \, \R^n \, .
\]
Pick a point $(s_0,y_0)$ on $\partial P(h)$ with $\partial_{y_n}h(s_0,y_0) = \eps_0 > 0$. Thus, by the implicit function theorem, we can solve the equation $z=h(s,y)$ locally near $(s_0,y_0)$ with respect to $y_n$ giving rise to a function
\[
y_n \, = \, v(s,y',z)
\]
defined for all $(s,y',z)$ in a small neighborhood of $(s_0,y'_0,0)$. We thus obtain $h(s,y',v(s,y',z))=z$ with
\[
\partial_s v \, = \, - (\partial_{y_n} h)^{-1} \partial_s h \, , \;  \nabla_{\!y'}v \, = \, - (\partial_{y_n} h)^{-1} \nabla_{\!y'}h \quad \text{and} \quad \partial_z v \, = \, (\partial_{y_n} h)^{-1} \, .
\]
This suggests a change of coordinates $(s,y) \mapsto (s',y',z) =: (t,x)$ which in turn implies the following relation:
\[
\partial_s h \, = \, - (\partial_{x_n} v)^{-1} \partial_t v \, , \;  \nabla_{\!y'}h \, = \, - (\partial_{x_n} v)^{-1} \nabla_{\!x'}v \quad \text{and} \quad \partial_{y_n} h \, = \, (\partial_{x_n} v)^{-1} \, .
\]
Also note that the free boundary $\{ h=0 \}$ has now been transformed into the fixed boundary $\partial H$. We can differentiate with respect to the $y$-variable to compute the second order derivatives. We get
\[
\Delta'_y h \, = \, - (\partial_{x_n}v)^{-1} ( \Delta'_x v - \partial_{x_n} \left( \frac{|\nabla'_{\!x}v|^2}{\partial_{x_n}v} \right) ) \quad \text{and} \quad \partial_{y_n}^2 h \, = \, - \, \frac{\partial_{x_n}^2 v}{(\partial_{x_n}v)^3} \, ,
\]
where $\Delta' = \Delta_{\R^{n-1}}$ is the $(n-1)$-dimensional Laplacian leaving aside the $x_n$-direction. In similar fashion, we obtain
\[
\nabla_{\!y} \, = \, 
	\begin{pmatrix} 
	\nabla'_{\!x} - (\partial_{x_n} v)^{-1} \nabla'_{\!x}v \, \partial_{x_n} \\
	(\partial_{x_n} v)^{-1} \, \partial_{x_n} 
	\end{pmatrix}
\]
by the chain rule. Here, $\nabla'_{\!x} = \nabla_{\!x'}$ denotes the gradient in the variable $x'=(x_1, \dots, x_{n-1})$ and therefore is an $(n-1)$-dimensional vector.

\subsection{Perturbations of stationary solutions} % perturbation of the quadratic steady state
\label{linearization}
We assume that the profile of a solution is approximately quadratic, i.e.\ $h_0 \sim dist(y, \R^n \setminus P_0(h))^2$. In this case we set $\tilde{h} = \sqrt{h}$ and rewrite (\ref{tfe}) in the form
\[
\partial_s \tilde{h}^2 + \nabla\!\cdot\! \bigl(\tilde{h}^2 \; \nabla\Delta \tilde{h}^2\bigr) = 0 \, ,
\]
or equivalently
\begin{equation}
\label{tfe^2}
\begin{split}
\partial_s \tilde{h} \, 
&+ \, \nabla\!\cdot\! \bigl(\tilde{h}^2 \, \nabla\Delta \tilde{h}\bigr) \, + \, 4 \, \tilde{h} \nabla \tilde{h} \cdot\! \nabla\Delta \tilde{h} \, + \, \tilde{h} (\Delta \tilde{h})^2 \, + \, 2 \, \tilde{h} |D_{\!y}^2 \tilde{h}|^2 \, +  \\
&+ \, 2 \, |\nabla \tilde{h}|^2 \Delta \tilde{h} \, + \, 4 \sum_{i,j=1}^n (\partial_{y_i} \tilde{h}) (\partial_{y_j}\tilde{h}) \, \partial_{y_i y_j} \tilde{h} \, = \, 0 \, .
\end{split}
\end{equation}
\textbf{Remark:} We only consider the possibly simplest case of a linearly degenerate mobility $m=1$. \\

Now as a reference frame for (\ref{tfe}) we take the stationary solution $h_{st}(y) = (y_n)_+^{\,2}$, corresponding to $\tilde{h}_{st}(y)=(y_n)_+$ for (\ref{tfe^2}). In the new coordinates, this becomes $v_{st}(x) = x_n$ for $x \in H$. We are now interested in the perturbed steady state $v(t,x) = v_{st}(x) + u(t,x)$, where $u$ can be regarded as a small perturbation. Via the above change of coordinates (with $h$ replaced by $\tilde{h}$), equation (\ref{tfe}) - or rather (\ref{tfe^2}) - transforms into (\ref{perturbation equation}), where all the nonlinear terms are collected in $f[u]$. Its precise form will be discussed in Lemma \ref{nonlinearity}. Figure \ref{fig:perturbation} captures the situation $a)$ before and $b)$ after these transformations.

\begin{figure}[htb]
\centering
\begin{tikzpicture}[domain=0:2] 

\draw[->, thick] (-1,0) -- (5,0) node[below right] {$y_n$};
\draw[->, thick] (0,0) -- (0,2.5) node[left] {$h$};
\draw[thick] (-1,0) -- (0,0) parabola (4.5,2.25);
\node at (2.85,.5) {$h_{st}$};
\draw[thick] (-.5,0) parabola (1.5,1.44) to [out=40,in=180] (2.8,1.6) parabola (4.5,2.35);
\node at (.9,1.2) {$h_0$};
\node at (2,-.75) {\small{\it{a)} $h_0 \approx h_{st}$}};

\draw[->, thick] (6.5,0) -- (11.5,0) node[below right] {$x_n$};
\draw[->, thick] (6.5,-.5) -- (6.5,2.5) node[left] {$v$};
\draw[thick] (6.5,0) to (11,2.25);
\node at (8.25,1.2) {$v_{st}$};
\draw[thick] (6.5,-.25) to (10,.7) to [out=10,in=225] (11.1,2.25);
%\draw[thick] (10.3,1.4) to (11.1,2.25);
\draw[->](9.8,1.27) -- (9.8,1.6);
\node at (9.8,1.15) {$u$};
\draw[->](9.8,1.03) -- (9.8,.7);
\node at (9,-.75) {\small{\it{b)} $v=v_{st}+u$}};

\end{tikzpicture} 
\vspace{-1ex}
\caption{Changing independent and dependent variables.}
\label{fig:perturbation}

\end{figure}
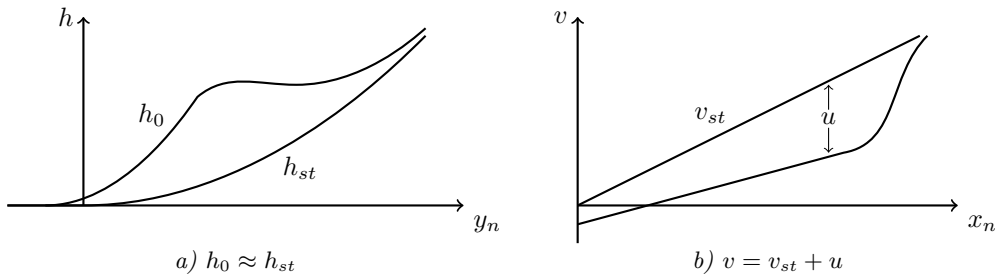

In order to give f[u] a more convenient form we first introduce the $\star$ notation to denote an arbitrary linear combination of products of indices for derivatives of $u$. For example, we write $\nabla u \star D_{\!x}^2 u$ for both $\partial_{x_n}u \Delta u$ and $\nabla'u \cdot \nabla'\partial_{x_n}u$. Moreover, we abbreviate the iterated application of $\star$ by
\[
P_j(D_{\!x}^k u) \, = \, \underbrace{D_{\!x}^k u \star \dots \star D_{\!x}^k u}_{j-\text{times}} \, , \qquad j,k \in \N_0 \, .
\]

\begin{lemma}
\label{nonlinearity}
Equation (\ref{tfe}) can be transformed into $\partial_tu + Lu = f_0[u] + x_n f_1[u] + x_n^{\,2} \, f_2[u]$ on $I \times H$ with
\[
\begin{split}
f_0[u] \, &= \, f_0^1(\nabla u) \star \nabla u \star D_{\!x}^2 u \, , \\
f_1[u] \, &= \, f_1^1(\nabla u) \star \nabla u \star D_{\!x}^3 u + f_1^2(\nabla u) \star P_2(D_{\!x}^2 u) \quad \text{and} \\
f_2[u] \, &= \, f_2^1(\nabla u) \star \nabla u \star D_{\!x}^4 u + f_2^2(\nabla u) \star D_{\!x}^2 u \star D_{\!x}^3 u + f_2^3(\nabla u) \star P_3(D_{\!x}^2 u) \, ,
\end{split}
\]
and $Lu = x_n^{-1} \Delta(x_n^{\,3} \, \Delta u) \, - \, 4 \, \Delta'u$.
\end{lemma}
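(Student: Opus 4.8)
The plan is to perform the change of variables of subsection \ref{local coords} on equation (\ref{tfe^2}) --- the reformulation of (\ref{tfe}) in terms of $\tilde h=\sqrt h$ --- with $v=x_n+u$, to track how explicit powers of $\tilde h$ become powers of $x_n$ and how $y$-derivatives become $x$-derivatives, to split off the linearization (which will turn out to be $Lu$), and to read off the announced shape of the remainder $f[u]$. The individual derivative computations are mechanical; the content of the statement lies in organizing the book-keeping, which I describe now.

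Two observations drive everything. First, since $\tilde h(t,x',v(t,x))=x_n$, the function $\tilde h$ is literally the coordinate $x_n$ in the new variables, so every \emph{explicit} factor $\tilde h^k$ in (\ref{tfe^2}) turns into $x_n^k$; the maximal such power is $k=2$, occurring only in the term $\nabla\!\cdot(\tilde h^2\nabla\Delta\tilde h)$, while the remaining spatial terms carry $\tilde h^1$ or $\tilde h^0$. Second, by the formulas of subsection \ref{local coords} (with $h$ replaced by $\tilde h$ and $v=x_n+u$) the operators $\partial_{y_n}$ and $\partial_{y_i}$ $(i<n)$ act in the new coordinates as $(1+\partial_{x_n}u)^{-1}\partial_{x_n}$ and $\partial_{x_i}-(1+\partial_{x_n}u)^{-1}\partial_{x_i}u\,\partial_{x_n}$; in particular each $y$-derivative introduces only rational functions of $\nabla u$ regular near $\nabla u=0$ and carries \emph{no} factor of $x_n$. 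Consequently a $y$-derivative never raises the $x_n$-power of an expression and lowers it by one exactly when it falls on an explicit $x_n$-factor: for instance $\nabla_y\!\cdot(\tilde h^2W)=\tilde h^2\,\nabla_y\!\cdot W+2\tilde h\,(\nabla_y\tilde h)\cdot W$ with $W=\nabla_y\Delta_y\tilde h$ produces an $x_n^2$-piece and an $x_n^1$-piece but nothing of order $x_n^0$, since $W$ itself is $x_n$-free. Sorting the six spatial terms of (\ref{tfe^2}) accordingly: the $\tilde h^0$-terms $2|\nabla\tilde h|^2\Delta\tilde h$ and $4\sum_{ij}\partial_i\tilde h\,\partial_j\tilde h\,\partial_{ij}\tilde h$ feed the $x_n^0$ part; the $\tilde h^1$-terms together with the $2\tilde h\,(\nabla_y\tilde h)\cdot W$ remainder feed the $x_n^1$ part; and $\tilde h^2\,\nabla_y\!\cdot W$ feeds the $x_n^2$ part.

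I would then expand each piece with the subsection \ref{local coords} formulas and split off its linearization. One computes $\Delta_y\tilde h=c(\nabla u)\star D_{\!x}^2u$, with $c$ a matrix of rational functions of $\nabla u$ regular at the origin and $c(0)\star D_{\!x}^2u=-\Delta_x u$; since applying $\nabla_y$ to a function of $\nabla u$ again yields a term of the form $(\,\cdot\,)\star D_{\!x}^2u$, it follows that $\nabla_y\Delta_y\tilde h=(\,\cdot\,)\star P_2(D_{\!x}^2u)+(\,\cdot\,)\star D_{\!x}^3u$ and, applying $\nabla_y\!\cdot$ once more, $\nabla_y\!\cdot(\nabla_y\Delta_y\tilde h)=(\,\cdot\,)\star P_3(D_{\!x}^2u)+(\,\cdot\,)\star D_{\!x}^2u\star D_{\!x}^3u+(\,\cdot\,)\star D_{\!x}^4u$; here $(\,\cdot\,)$ denotes a coefficient rational in $\nabla u$ and regular at the origin, and a surplus $\nabla u$-factor is always absorbed into such a coefficient. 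The one subtlety is the promotion of the ``clean'' top-order pieces: a term $c(\nabla u)\star D_{\!x}^{k}u$ with $c(0)\neq0$ contributes $c(0)\star D_{\!x}^{k}u$ to the linear part and leaves the remainder $(c(\nabla u)-c(0))\star D_{\!x}^{k}u$, which by regularity of $c$ at the origin equals $(\,\cdot\,)\star\nabla u\star D_{\!x}^{k}u$. This is why $D_{\!x}^3u$ and $D_{\!x}^4u$ occur in $f[u]$ only multiplied by a factor $\nabla u$, whereas $P_2(D_{\!x}^2u)$ and $P_3(D_{\!x}^2u)$ --- which carry no linear part --- may appear with a coefficient that does not vanish at $\nabla u=0$. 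Collecting the linear contributions and using the identity $x_n^{-1}\Delta(x_n^{\,3}\Delta u)=x_n^{\,2}\Delta^2u+6x_n\partial_{x_n}\Delta u+6\Delta u$ (equivalently $Lu=x_n^{\,2}\Delta^2u+6x_n\partial_{x_n}\Delta u+2\Delta u+4\partial_{x_n}^2u$), one checks that the linear part of the transformed spatial terms is exactly $-Lu$, while $\partial_s\tilde h=-(1+\partial_{x_n}u)^{-1}\partial_tu$.

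Finally I would assemble the equation. By the preceding step the transformed (\ref{tfe^2}) reads $-(1+\partial_{x_n}u)^{-1}\partial_tu-Lu+N[u]=0$, where $N[u]$ collects the nonlinear spatial terms, which by the above are of the form $f_0[u]+x_nf_1[u]+x_n^{\,2}f_2[u]$ with $f_0,f_1,f_2$ as announced. Multiplying by $-(1+\partial_{x_n}u)$ normalizes the coefficient of $\partial_tu$ to $1$, turns $-Lu$ into $Lu$, and --- since $-(1+\partial_{x_n}u)$ is a function of $\nabla u$ regular near the origin --- changes neither the $x_n$-powers nor the $\star$-templates in $N[u]$; the one extra term $\partial_{x_n}u\cdot Lu$ it produces is again of the required shape, with constant coefficient functions and $\partial_{x_n}u$ playing the role of ``$\nabla u$''. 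This gives $\partial_tu+Lu=f_0[u]+x_nf_1[u]+x_n^{\,2}f_2[u]$ with the $f_i^j$ concrete rational functions of $\nabla u$ regular near $0$. The only genuine obstacle is the middle step: organizing the expansion so that each summand is assigned its correct power of $x_n$ and its correct $\star$-template, and in particular distinguishing the pieces that keep a nonzero coefficient from those forced to acquire a factor $\nabla u$ once the linear part has been removed; the individual derivative computations are otherwise routine.
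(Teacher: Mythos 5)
Your proposal is correct and follows essentially the same route as the paper: transform (\ref{tfe^2}) term by term using the section \ref{local coords} formulas with $v=x_n+u$, sort by explicit power of $\tilde h=x_n$, and split off the linearization $Lu$. The paper multiplies each term by $-v_n$ before transforming while you perform that normalization at the end, but this is the same computation carried out in a slightly different order; your explanation of the bookkeeping (why $y$-derivatives cannot raise the $x_n$-power, and why subtracting the linear part forces a $\nabla u$-factor onto the $D_x^3u$ and $D_x^4u$ terms) makes explicit what the paper's termwise computation leaves implicit.
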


\begin{proof}
Considering equation (\ref{tfe^2}) we first transform each summand separately and then linearize the transformed terms around the steady state $v_{st}(x)=x_n$. For the temporal part we get $\partial_s \tilde{h} = - \frac{\partial_tu}{v_n}$, where $v_n = \partial_{x_n} v = 1+\partial_{x_n} u$. Using the transformation formulas from section \ref{local coords}, we calculate term by term.
\[
- v_n \, \nabla \!\cdot\! ( \tilde{h}^2 \, \nabla \Delta \tilde{h}) \, = \, \nabla \!\cdot\! ( x_n^{\,2} \, \nabla \Delta u) - R_1(u) \, ,
\]
where
\[
R_1(u) \, = \, x_n f_1[u] + x_n^{\,2} f_{2}[u] \, .
\]
Moreover,
\[
- 4 \, v_n \, \tilde{h} \, \nabla \tilde{h} \cdot\! \nabla\Delta \tilde{h} \, = \, 4 \, x_n \, \partial_{x_n}\Delta u - R_{2}(u) \, ,
\]
where this time the remainder reduces to $R_2(u) = x_n f_1[u]$. For the next expression, we argue similarly to get
\[
- 2 \, v_n \, |\nabla \tilde{h}|^2 \, \Delta \tilde{h} \, = \, 2 \, \Delta u - f_0[u] \, .
\]
The last term of (\ref{tfe^2}) transforms to 
\[
- 4 \, v_n \sum_{i,j=1}^n (\partial_{y_i} \tilde{h}) (\partial_{y_{j}}\tilde{h}) \, \partial_{y_i y_j} \tilde{h} \, = \, 4 \, \partial_{x_n}^2 u - f_0[u] \, .
\] 
It remains to check the quadratic expressions in (\ref{tfe^2}). These terms, however, do not contain any linear parts such that both are completely absorbed by the inhomogeneity. Indeed, one can prove that
\[
- v_n \bigl( \tilde{h} \, (\Delta \tilde{h})^2 + 2 \, \tilde{h} \, |D_{\!y}^2 \tilde{h}|^2 \bigr) \, = \, - x_n f_1[u]
\]
in the same manner as above. Altogether this finishes the proof of the lemma.
\end{proof}

\begin{rem}
\label{decomposition of f_i^k}
A more detailed computation of the inhomogeneity $f[u]$ shows that the $f_i^k(\nabla u)$ in Lemma \ref{nonlinearity} decompose into factors of the form $P_j(\nabla u)$ and  $(1+\partial_{x_n} u)^{-m}$, for some combinations of the integers $0 \leq j \leq 4$ and $1 \leq m \leq 6$. We shall use this fact later on when it comes to proving the nonlinear estimates in section \ref{nonlinear estimates}.
\end{rem}

For $0<T\leq\infty$, let $I=(0,T)$ and $\Omega \subseteq H$ be open. We begin by discussing the linear equation $\partial_t u + Lu=f$ on $I \times \Omega$, that is, we ignore the dependency of $f$ on $u$. A distributional solution is characterized by
\[
- u(x_n \, \partial_t \varphi) + \Delta u(x_n^{\,3} \, \Delta \varphi) + 4 \, \nabla' u(x_n \nabla' \varphi) \, = \, f(x_n \, \varphi)
\]
for all test functions $\varphi \in C_c^{\infty}(I\times\Omega)$. If $u, \nabla'u, \Delta u, f \in L_{loc}^1(I\times\Omega)$, then this translates into
\begin{equation}
\label{distributional solution}
- \int_{I\times\Omega} x_n \, u \, \partial_t \varphi \, dx dt + \int_{I\times\Omega} x_n^{\,3} \, \Delta u \, \Delta \varphi \, dx dt + 4 \int_{I\times\Omega} x_n \nabla' u \cdot \nabla' \varphi \, dx dt \, = \, \int_{I\times\Omega} x_n \, f \, \varphi \, dx dt \, .
\end{equation}
Formally, we obtain this weak formulation by integrating the equation against a suitable test function $\varphi$ and then applying integration by parts with respect to time and space, respectively. In such a way, we can relax the requirements on the differentiability of solutions considerably, and thus we will use the integral representation (\ref{distributional solution}) as a model for our definition of solution. \\

The following scaling behavior turns out to be crucial: If $u$ is a solution of the linear equation on $I\times\Omega$, then
\begin{equation}
\label{linear scaling}
u_\lambda(t,x) \, = \, \lambda^{-2} (u \circ T_\lambda)(t,x) \, = \, u(\lambda^2 t, \lambda x)
\end{equation}
is a solution on $T_\lambda^{-1}(I\times\Omega)$ to the inhomogeneity $f \circ T_\lambda$. Moreover, given an initial value $u(0)=g$, we have $u_\lambda(0, \cdot) = \lambda^{-2} g(\lambda \, \cdot)$. The nonlinear equation, on the other hand, is invariant under $T_\lambda$ in the sense that
\[
(\partial_t + L) u_\lambda \, = \, \lambda (\partial_{\hat{t}} + \widehat{L})u \, = \, f[u] \, = \, \lambda^{-1} f[u_\lambda] \, ,
\]
where $(\hat{t}, \hat{x}) = (\lambda^2 t, \lambda x)$ and $\widehat{L}$ is the spatial linear operator with respect to the variable $\hat{x}$. The last equality can be verified by means of the formula $\partial_x^\alpha u_\lambda = \lambda^{|\alpha|-2} \partial_{\hat{x}}^\alpha u$. 

%%%%%%%%%%%%%%%%%%%%%%%%%%%%%%%%%%%%%%%%%%%%%%%%%%%%%%%%%%%%

\section{The model linear degenerate equation} % linear theory
\label{linear model case}

\subsection{Preliminaries} % metric & weighted Sobolev spaces
\label{preliminaries}
Our aim in this section is to prove a Gaussian estimate for the linear degenerate evolution equation
\[
\partial_t u + Lu \, = \, f \, , \quad u(0,\cdot) \, = \, g \, ,
\]
on the half space $H$. The diffusion $L$ is governed by a Riemannian metric $g$ given by the family of inner products
\[
g_x(v,w) \, = \, x_n^{-1} \, v \cdot w
\]
on the tangent space $T_x H \cong \R^n$ that is attached to $x \in H$, where by $v \cdot w$ we denote the standard scalar product on $\R^n$. The Riemannian structure allows to measure the length of parametrized curves $\gamma \subset H$ by 
\[
\ell_g(\gamma) \, = \, \int_a^b \sqrt{g_{\gamma(s)}\bigl( \gamma'(s), \gamma'(s) \bigr)} \, ds \, ,
\]
and hence induces an intrinsic metric $d$ called the metric of Carnot-Caratheodory. The distance between two points $x$ and $y$ in this metric can be extended to the boundary $\partial H$ and is equivalent to the expression
\[
\rho(x,y) \, = \, \frac{|x-y|}{x_n^{\,\frac{1}{2}} + y_n^{\,\frac{1}{2}} + |x-y|^\frac{1}{2}}
\]
in the sense that 
\begin{equation}
\label{equivalence}
c_d^{-1} d(x,y) \, \leq \, \rho(x,y) \, \leq \, d(x,y)
\end{equation}
for a constant $c_d > 1$. Let us define the intrinsic ball, or $d$-ball, of radius $R>0$ centered at $x \in \hs$ by the set
\[
B_R(x) \, = \, \{ y \in \hs \mid d(x,y) < R \} \, ,
\]
and the Euclidean ball (intersected with $\hs$) by $B_R^{eu}(x) = \{ y \in \hs \mid |x-y| < R \}$. A natural choice of measure in this context is $d\mu_\sigma = x_n^{\,\sigma} dx$ for $\sigma > -1$. We have $x_n^{\,\sigma} \in L_{loc}^1(\hs)$ if and only if $\sigma > -1$, and hence by
\[
|\Omega|_\sigma \, = \, \mu_\sigma(\Omega) \, = \, \int_\Omega x_n^{\,\sigma} \, dx \, , \quad \Omega \subseteq \hs\, ,
\]
a Radon measure is canonically associated with the weight $x_n^{\,\sigma}$. For the Lebesgue measure (the case $\sigma=0$), we write $\mathcal{L}^n$ instead of $\mu_0$ and $|\Omega|$ instead of $|\Omega|_0$. \\
For every $\sigma > -1$, the measure $\mu_\sigma$ satisfies a doubling condition with respect to the metric $d$, that is, we have
\begin{equation}
\label{doubling condition}
0 \, < \, |B_{cR}(x)|_\sigma \, \leq \, b |B_R(x)|_\sigma \, < \, \infty
\end{equation}
for some constants $c,b\geq 1$. Hence the metric measure space $(\hs, d, \mu_\sigma)$ defines a space of homogeneous type, a setting in which the Calder\'on-Zygmund theory can be established. Also note that $\mu_\sigma$ has the same collection of null sets as the Lebesgue measure (in symbols $\mu_\sigma \sim \mathcal{L}^n$).

For local considerations, we will also need the following properties: Using the above notations we have
\begin{equation}
\label{ball topology}
B_{c_d^{-2} R(R+\sqrt{x_n})}^{eu}(x) \, \subset \, B_R(x) \, \subset \, B_{2 R(R+2\sqrt{x_n})}^{eu}(x) \, ,
\end{equation}
and as a consequence
\begin{equation}
\label{ball measure}
|B_{R}(x)|_\sigma \, \sim \, R^n (R + \sqrt{x_n})^{n+2\sigma} \, .
\end{equation}
For the parabolic equation we use the parabolic distance function $d^{(t)}: \R \times \hs \times \R \times \hs \to [0,\infty)$ with
\[
d^{(t)}\bigl( (t,x), (s,y) \bigr) \, = \, \sqrt[4]{|t-s| + d(x,y)^4} \, ,
\]
and accordingly, $(\R\times \hs, d^{(t)}, \mathcal{L} \times \mu_\sigma)$ is a homogeneous type space corresponding to the linear parabolic equation $\partial_t u + Lu = f$. For more details and the proofs of (\ref{equivalence})--(\ref{ball measure}), we refer the reader to \cite{DH98, K99}.\\

The Lebesgue space with respect to $\mu_\sigma$ on $\Omega \subseteq \hs$ is denoted by $L^p(\Omega, \mu_\sigma)$. Since $\mu_\sigma \sim \mathcal{L}^n$, we have
\[
L^{\infty}(\Omega, \mu_\sigma) \, = \, L^{\infty}(\Omega, \mathcal{L}^n) \, .
\]
We sometimes abbreviate the notation to $L^p(\mu_\sigma)$ if $\Omega=H$ and to $L^p(\Omega)$ if $\sigma=0$. When $1 \leq p < \infty$, the dual space of $L^p(\Omega, \mu_\sigma)$ has a natural isomorphism with $L^{p'}(\Omega, \mu_\sigma)$, where $p'$ is the conjugate exponent satisfying $\frac{1}{p} + \frac{1}{p'} =1$, which associates $v \in L^{p'}(\Omega, \mu_\sigma)$ with the functional $J_v$ given by
\[
J_v: L^p(\Omega, \mu_\sigma) \ni u \mapsto \int_\Omega u v \, d\mu_\sigma \, .
\]
Another important property of (weighted) Lebesgue spaces is that $C_c^{\infty}(\Omega)$ is a dense subspace for $p \in [1,\infty)$ (for $p=\infty$ this is false). \\
Now suppose $\Omega = \hs$. Then the same density result holds true for weighted Sobolev spaces defined by
\[
W^{k, p}(\Omega, \mu_{\sigma_0}, \dots, \mu_{\sigma_k}) \, = \, \{ u \in L^p(\Omega, \mu_{\sigma_0}) \mid \partial_{x}^\alpha \, u \in L^p(\Omega, \mu_{\sigma_{|\alpha|}}) \; \forall \, |\alpha| \leq k \} \, .
\]
More precisely,
\begin{equation}
\label{density}
\overline{C_c^{\infty}(\hs)} \, = \, W^{2,2}(H, \mu_1, \mu_1, \mu_3)
\end{equation}
provided that $\sigma_k \geq \dots \geq \sigma_0 > -1$ with $\sigma_{j+1}-\sigma_j \leq p$ and $\sigma_k \geq 0$, see \cite{K85}. Here the closure refers to the norm
\[
\| u \|_{W^{k, p}(H, \,\mu_{\sigma_0}, \dots, \mu_{\sigma_k})} \, = \, \Bigl( \sum_{|\alpha| \leq k} \| \partial_x^\alpha u \|_{L^p(H,\mu_{\sigma_{|\alpha|}})}^p \Bigr)^\frac{1}{p} \, .
\]
This density result will be very useful for proving certain inequalities which only involve Sobolev norms. In such situations it suffices to prove the inequalities only for functions in $C_c^{\infty}(\hs)$ (with classical derivatives instead of distributional ones). For example, applying integration by parts repeatedly, we get
\begin{equation}
\label{auxiliary formula}
\| D_{\!x}^2 \varphi \|_{L^2(\mu_3)}^2 \, = \, \| \Delta \varphi \|_{L^2(\mu_3)}^2 + 6 \, \| \nabla' \varphi \|_{L^2(\mu_1)}^2
\end{equation}
for any $\varphi \in C_c^{\infty}(\hs)$. By approximation, the same identity also holds for all $u \in W^{2,2}(H, \mu_1, \mu_1, \mu_3)$.

\subsection{Energy estimates} % energy theory
\label{energy estimates}
Considering the linear equation one can in general not expect to have classical solutions. Indeed, not even the solution concept of a distributional solution that we have introduced in section \ref{linearization} satisfies our requirements. However, we take the integral representation (\ref{distributional solution}) as a starting point (or motivation) and search for admissible extensions to relatively open subsets of $\hs$. We will call a solution to be legitimate if it retains a certain behavior towards the boundary in the sense that it admits values at $\partial H$. For this it is important to choose the test function space properly. More precisely, a test function is sometimes supposed to vanish at initial time depending on whether we consider the initial value problem or not.

\begin{defin}
\label{energy solution}
Let $I = (0,T)$ be open, $\Omega \subseteq \hs$ relatively open and $f\in L_{loc}^1\bigl( [0, T); L^2(\Omega,\mu_1) \bigr)$.
\begin{enumerate}
\item[i)]
We call $u$ an energy solution of $\partial_{t}u+Lu=f$ on $I\times\Omega$, if and only if $u\in L_{loc}^2\bigl( I; L^2(\Omega,\mu_1)\bigr)$, $\nabla u\in L^2\bigl( I; L^2(\Omega,\mu_1) \bigr)$, $D_{\!x}^2 u \in L^2\bigl( I; L^2(\Omega,\mu_3)\bigr)$, and the identity (\ref{distributional solution}) holds for all test functions $\varphi \in C_c^{\infty}(I \times \Omega)$.
\item[ii)]
We call $u$ an energy solution of the initial value problem on $[0,T)\times\Omega$ with $u(0)=g \in L^2(\Omega,\mu_1)$, if and only if $u\in L_{loc}^2\bigl( [0, T); L^2(\Omega,\mu_1)\bigr)$, $\nabla u$ and $D_{\!x}^2 u$ satisfy the assumptions of (i), and (\ref{distributional solution}) holds for all $\varphi \in C_c^{\infty}\bigl( [0, T) \times\Omega \bigr)$ with the additional term $\int_\Omega g \,\varphi(0) \, d\mu_1$ added to the right hand side of identity (\ref{distributional solution}).
\end{enumerate}
\end{defin}

A solution is constructed by the Galerkin method: We choose a suitable increasing sequence of finite dimensional subspaces $V_N \subset W^{2,2}(H, \mu_1, \mu_1, \mu_3)$ and look for functions $u_N: I \to V_N$ for which the weak formulation of the equation projected onto $V_N$ holds. The problem reduces to an ODE which can be solved by standard ODE theory. Each of the $u_N$ satisfies an a~priori estimate, the so-called energy estimate, which allows us to pass to the limit $N\to\infty$. This yields a weak solution in the sense of Definition \ref{energy solution} (ii).

\begin{prop}
\label{energy identity}
Let $I=(0,T)$ be open and $f \in L_{loc}^1\bigl( [0,T); L^2(\mu_1) \bigr)$. If $u$ is an energy solution of $\partial_t u + Lu = f$ on $I \times \hs$, then $u \in C\bigl( [s,T), L^2(\mu_1) \bigr)$ for any $s \in I$ and we have the energy identity
\[
\frac{1}{2} \, \| u(t) \|_{L^2(\mu_1)}^2 + \int_s^t \| \Delta u \|_{L^2(\mu_3)}^2 + 4 \, \| \nabla' u \|_{L^2(\mu_1)}^2 \, d\mathcal{L} \, = \, \frac{1}{2} \, \| u(s) \|_{L^2(\mu_1)}^2 + \int_s^t (f \mid u)_{L^2(\mu_1)} \, d\mathcal{L}
\]
for any $s < t \leq T$. If additionally $g \in L^2(\mu_1)$ is given, then there exists an energy solution $u \in C\bigl( \bar{I}, L^2(\mu_1) \bigr)$ with $u(0)=g$ and the energy identity also holds for $s = 0$.
\end{prop}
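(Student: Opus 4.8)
\emph{Plan of proof.} The plan is to work in the Gelfand triple $V:=W^{2,2}(H,\mu_1,\mu_1,\mu_3)\hookrightarrow\mathcal H:=L^2(\mu_1)\hookrightarrow V^{*}$, where $V$ is dense in $\mathcal H$ by (\ref{density}), and with the continuous, \emph{nonnegative} bilinear form $a(u,v)=\int_H x_n^{\,3}\Delta u\,\Delta v\,dx+4\int_H x_n\nabla'u\cdot\nabla'v\,dx$; note $a(u,u)=\|\Delta u\|_{L^2(\mu_3)}^2+4\|\nabla'u\|_{L^2(\mu_1)}^2\ge0$ and $\langle Lu,\varphi\rangle=a(u,\varphi)$, so that (\ref{distributional solution}) says precisely that $u$ is an energy solution on $I\times\hs$ iff $u\in L^2_{loc}(I;V)$ with distributional derivative $\partial_t u=f-Lu$. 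Since $Lu\in L^2_{loc}(I;V^{*})$ and $f\in L^1_{loc}([0,T);\mathcal H)\hookrightarrow L^1_{loc}([0,T);V^{*})$, this gives $\partial_t u\in L^1_{loc}([0,T);V^{*})$, and the identity to be proved is exactly the integrated chain rule $\tfrac{d}{dt}\|u\|_{\mathcal H}^2=2\langle\partial_t u,u\rangle=2(f\mid u)_{L^2(\mu_1)}-2\,a(u,u)$. The one genuinely non-routine feature, which I expect to be the main obstacle, is that $\partial_t u$ lies only in $L^1$ — not $L^2$ — in time with values in $V^{*}$, so we sit just outside the scope of the classical Lions--Magenes embedding lemma; I would get around this using the sign of $a$.

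The heart of the argument would be a mollification in time. I would fix $s_0\in I$ and, for small $\eta>0$, pass to the Steklov average $u_\eta(t)=\eta^{-1}\int_t^{t+\eta}u(\tau)\,d\tau$; since $L$ is linear and $t$-independent it commutes with the averaging, so $\partial_t u_\eta+Lu_\eta=f_\eta$ with $u_\eta$ now locally absolutely continuous into $\mathcal H$, hence legitimately testable against itself. Integrating in time gives, for a.e.\ $t>s_0$,
\[
\tfrac12\|u_\eta(t)\|_{\mathcal H}^2+\int_{s_0}^{t}a(u_\eta,u_\eta)\,d\mathcal L=\tfrac12\|u_\eta(s_0)\|_{\mathcal H}^2+\int_{s_0}^{t}(f_\eta\mid u_\eta)_{L^2(\mu_1)}\,d\mathcal L.
\]
Taking $s_0$ to be a Lebesgue point of the $\mathcal H$-valued map $u$, I would first discard the nonnegative term $\int a(u_\eta,u_\eta)$ and run Gronwall to bound $\|u_\eta\|_{L^\infty(s_0,T';\mathcal H)}$ uniformly in $\eta$ for every $T'<T$; this yields $u\in L^\infty_{loc}(I;\mathcal H)$ and in particular $(f\mid u)_{L^2(\mu_1)}\in L^1_{loc}([0,T))$. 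Only then would I let $\eta\to0$: $u_\eta(s_0)\to u(s_0)$ and $u_\eta(t)\to u(t)$ in $\mathcal H$ at Lebesgue points, $u_\eta\to u$ in $L^2_{loc}(I;V)$ so $\int_{s_0}^{t}a(u_\eta,u_\eta)\to\int_{s_0}^{t}a(u,u)$ by continuity of $a$, and the uniform $\mathcal H$-bound together with $f_\eta\to f$ in $L^1_{loc}([0,T);\mathcal H)$ handles the right-hand side; this establishes the energy identity for a.e.\ $t>s_0$. As noted, the delicate point here is precisely that the $L^\infty_{loc}(\mathcal H)$-bound has to be extracted from the averaged identity (via $a\ge0$) \emph{before} $\eta\to0$, because $f$ is only integrable — not square-integrable — in time.

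Next I would upgrade regularity. The right-hand side of the identity is continuous in $t$ on $[s_0,T)$ (both $(f\mid u)_{L^2(\mu_1)}$ and $a(u,u)$ are in $L^1_{loc}$), so $t\mapsto\|u(t)\|_{\mathcal H}$ has a continuous representative; on the other hand $u$ is absolutely continuous into $V^{*}$ (its time derivative being $L^1_{loc}$ there), hence weakly continuous into $\mathcal H$ once combined with the $L^\infty_{loc}(\mathcal H)$-bound and density of $V$. Weak continuity plus norm continuity in a Hilbert space forces strong continuity, so $u\in C([s,T);\mathcal H)$ for every $s\in I$, and the identity then holds for all such $s\le t<T$ — and up to $t=T$ when $f\in L^1([0,T);\mathcal H)$. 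Since Definition~\ref{energy solution} already imposes $a(u,u)\in L^1(I)$, the same reasoning would push the identity and the continuity down to $s=0$.

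Finally, for the initial value problem I would first produce $u$ by the Galerkin scheme outlined before the statement: testing the finite-dimensional equation against $u_N$ and using $a\ge0$ with Gronwall controls $\|u_N\|_{L^\infty(I;\mathcal H)}$, $\|\Delta u_N\|_{L^2(I;L^2(\mu_3))}$ and $\|\nabla'u_N\|_{L^2(I;L^2(\mu_1))}$ by $\|g\|_{\mathcal H}+\|f\|_{L^1(I;\mathcal H)}$; through (\ref{auxiliary formula}) and the (inhomogeneous) weighted interpolation inequalities valid on $W^{2,2}(H,\mu_1,\mu_1,\mu_3)$ I would then also bound $\|D_{\!x}^2u_N\|_{L^2(I;L^2(\mu_3))}$ and $\|\nabla u_N\|_{L^2(I;L^2(\mu_1))}$ — here one must be careful that $a$ is not coercive on $V$ and in particular does not control $\partial_{x_n}u$, so these last bounds have to come through the weighted Sobolev inequalities of Section~\ref{preliminaries}, not from the energy form. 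Weak compactness and passage to the limit then give an energy solution with $u(0)=g$ (the conditions in Definition~\ref{energy solution} being stable under weak limits, and $u_N(0)\to g$, $u_N\rightharpoonup u$ pinning down the trace at $0$), and the first part of the proposition applies to yield $u\in C(\bar I;\mathcal H)$ together with the energy identity for $s=0$.
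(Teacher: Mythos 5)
Your proposal follows essentially the same route as the paper: both prove the identity by Steklov-averaging in time (the paper tests (\ref{distributional solution}) against $\varphi(\tau)=\chi_\delta(\tau)\,\eps^{-1}\int_\tau^{\tau+\eps}u\,d\mathcal L$ and sends $\delta,\eps\to0$, you average the equation and test against $u_\eta$), then deduce strong $\mathcal H$-continuity from norm continuity plus weak continuity, and produce the solution of the Cauchy problem by Galerkin. The one thing worth noting is that you have made explicit two points the paper leaves implicit — that $f$ is only $L^1_{loc}$ in time so one must extract the $L^\infty_{loc}(\mathcal H)$ bound from the averaged identity (using $a\ge0$) before passing to the limit, and that $a$ is not $V$-coercive so the $D_x^2u_N$ bounds in the Galerkin step must come from (\ref{auxiliary formula}) and the weighted Sobolev inequalities rather than from the energy form itself — and both observations are correct and genuinely sharpen the argument.
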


\begin{proof}
First note that the density result of section \ref{preliminaries} allows us to use modifications of the solution itself as test functions. In particular, the energy identity follows directly from a formal calculation using $\varphi = \chi_{(s,t)} u$ as test function. In order to make this a legitimate test function we need to approximate the characteristic function and to regularize $u$ with respect to time. More precisely, we use
\[
\varphi(\tau) \, = \, \chi_\delta(\tau) \, \eps^{-1} \int_\tau^{\tau+\eps} u \, d\mathcal{L}
\]
in the weak formulation of the linear equation (regularized in the same manner) and send $\delta, \eps \to 0$. This gives the energy identity for every energy solution. Here, the approximate characteristic function is given by
\[
\chi_\delta(\tau) \, = \, \frac{1}{\delta} \;
\begin{cases}
0 & \text{if}\quad 0 < \tau \leq s \\
\tau-s & \text{if} \quad s < \tau \leq s + \delta \\
\delta & \text{if} \quad s + \delta < \tau \leq t \\
t-\tau + \delta& \text{if} \quad t < \tau \leq t + \delta \\
0 & \text{if} \quad t + \delta < \tau < T
\end{cases}
\]
for a sufficiently small $\delta > 0$. As a consequence, we see that $t \mapsto \| u(t) \|_{L^2(\mu_1)}$ is continuous. By construction, $t \mapsto u(t) \in L^2(\mu_1)$ is weakly continuous. This implies the continuity of $t \mapsto u(t)$ in the Hilbert space $L^2(\mu_1)$. For the initial value problem, we proceed similarly.
\end{proof}

\begin{rem}
It is essential in the proof of Proposition \ref{energy identity} to consider a solution on the whole half space $\hs$, because only here we have the density result stated in (\ref{density}). For general local solutions, the energy identity is false.
\end{rem}

\begin{cor}
An energy solution of the Cauchy problem on $I \times \hs$ is uniquely determined by $f$ and $g$: If $u_1$ and $u_2$ are two solutions with $u_1(0)=u_2(0)=g$, then $u=u_1-u_2$ satisfies the homogeneous equation with zero Cauchy data. Proposition \ref{energy identity} now implies $\| u(t) \|_{L^2(\mu_1)} = 0$, and hence $u \equiv 0$.
\end{cor}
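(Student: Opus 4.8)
The plan is to deduce uniqueness from the energy identity of Proposition~\ref{energy identity}, applied to the difference of two solutions. First I would note that the collection of energy solutions of the Cauchy problem on $[0,T)\times\hs$ forms a linear space: every membership condition in Definition~\ref{energy solution}(ii) --- that $u\in L^2_{loc}\bigl([0,T);L^2(\mu_1)\bigr)$ and that $\nabla u$, $D_{\!x}^2 u$ lie in the indicated weighted $L^2$-classes --- is preserved under linear combinations, and the integral identity~(\ref{distributional solution}), together with the extra term $\int_\Omega g\,\varphi(0)\,d\mu_1$ on its right-hand side, depends linearly on the pair $(u,g)$. Hence, if $u_1$ and $u_2$ are energy solutions of the Cauchy problem with the same inhomogeneity $f$ and the same initial datum $g$, then $u:=u_1-u_2$ is an energy solution of the homogeneous initial value problem $\partial_tu+Lu=0$ on $[0,T)\times\hs$ with $u(0)=0$.

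Next I would invoke Proposition~\ref{energy identity}. Since the inhomogeneity is $f\equiv0\in L^1_{loc}\bigl([0,T);L^2(\mu_1)\bigr)$, the initial datum is $g=0\in L^2(\mu_1)$, and the underlying domain is all of $\hs$, the proposition applies and yields the energy identity at $s=0$:
\[
\frac{1}{2}\,\|u(t)\|_{L^2(\mu_1)}^2 + \int_0^t \|\Delta u\|_{L^2(\mu_3)}^2 + 4\,\|\nabla'u\|_{L^2(\mu_1)}^2 \, d\mathcal{L} \;=\; \frac{1}{2}\,\|u(0)\|_{L^2(\mu_1)}^2 + \int_0^t (0\mid u)_{L^2(\mu_1)}\,d\mathcal{L} \;=\; 0
\]
for every $t\in(0,T]$. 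Each term on the left-hand side is nonnegative, so all of them vanish; in particular $\|u(t)\|_{L^2(\mu_1)}=0$. As $\mu_1\sim\mathcal{L}^n$, this forces $u(t)=0$ almost everywhere, and since this holds for every $t$ we conclude $u_1=u_2$, which is the assertion.

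The one step that carries all the weight is the availability of the energy identity \emph{up to $s=0$} for the difference $u$, as opposed to merely on compact subintervals of $I$; this is precisely the initial-value part of Proposition~\ref{energy identity}, and it is here that working on the entire half space is indispensable. The proof of that proposition tests the (time-regularized) equation against the solution itself, cut off by the approximate characteristic functions $\chi_\delta$, which is a legitimate operation only because of the global density identity~(\ref{density}), $\overline{C_c^\infty(\hs)}=W^{2,2}(H,\mu_1,\mu_1,\mu_3)$. For a merely local energy solution the energy identity --- and hence the present uniqueness argument --- would break down, exactly as observed in the remark preceding the statement. Beyond this, no estimates are needed: linearity together with the energy identity closes the argument.
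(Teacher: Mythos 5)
Your proposal is correct and follows essentially the same route as the paper: linearity of the equation and of Definition~\ref{energy solution}(ii) shows that $u=u_1-u_2$ is an energy solution of the homogeneous initial value problem with $u(0)=0$, and the energy identity at $s=0$ from Proposition~\ref{energy identity} then forces $\|u(t)\|_{L^2(\mu_1)}=0$ for all $t$, hence $u\equiv0$. Your closing observation about why the energy identity up to $s=0$ and the global density result (\ref{density}) are indispensable matches the remark preceding the corollary in the paper.
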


We wish to show that the operator $f \mapsto u$ is a singular integral operator defined on certain weighted Sobolev spaces. This requires two types of estimates: kernel estimates and a pointwise estimates. In preparation for the latter, we prove an energy-type estimate.

\begin{prop}
\label{energy estimate}
For $T>0$, let $I = (0,T) \subset \R$ be an open interval. If $f \in L^2\bigl( I; L^2(\mu_1) \bigr)$, then there exists a positive constant $c$ only depending on space dimension $n$ such that the energy estimate
\[
\int_I \| \partial_t u \|_{L^2(\mu_1)} + \| D_{\!x}^2 u \|_{W^{2,2}(\mu_1, \mu_3, \mu_5)} \, d\mathcal{L} \, \leq \, c \int_I \| f \|_{L^2(\mu_1)} \, d\mathcal{L}
\]
holds for the unique energy solution $u$ of $\partial_t u + Lu=f$ on $I \times \hs$ with $u(0)=0$.
\end{prop}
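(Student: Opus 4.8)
The plan is to derive the sharp parabolic regularity bound by combining the self-adjoint variational structure of $L$ on $L^2(\mu_1)$ with a time-frozen elliptic a~priori estimate in the degenerate weighted spaces, and then to pass to the time-integrated form asserted. As in the proof of Proposition \ref{energy identity}, I would work throughout with smooth functions and justify all integrations by parts afterwards by means of the density result (\ref{density}); in particular one may test the equation against (a time regularization of) the solution and its time derivative. The basic structural fact is that, for $\varphi,\psi\in C_c^\infty(\hs)$,
\[
(L\varphi,\psi)_{L^2(\mu_1)} \, = \, (\Delta\varphi,\Delta\psi)_{L^2(\mu_3)} + 4\,(\nabla'\varphi,\nabla'\psi)_{L^2(\mu_1)} \, ,
\]
obtained by integrating by parts twice in all variables and once in the tangential ones, with no boundary contribution at $\{x_n=0\}$ since $x_n^{\,3}$ and $\nabla(x_n^{\,3}\,\cdot\,)$ vanish there. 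Thus $L$ is symmetric and non-negative on $L^2(\mu_1)$, with form domain $W^{2,2}(H,\mu_1,\mu_1,\mu_3)$ by (\ref{density}).

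For the parabolic part, I would test the equation against $\partial_t u$ (after a time regularization as in Proposition \ref{energy identity}). Using the symmetry of $L$ to rewrite $(Lu,\partial_t u)_{L^2(\mu_1)} = \tfrac12\tfrac{d}{dt}\bigl(\|\Delta u\|_{L^2(\mu_3)}^2 + 4\,\|\nabla'u\|_{L^2(\mu_1)}^2\bigr)$ and integrating over $I$, the initial condition $u(0)=0$ makes the spatial energy vanish at $t=0$ and, being non-negative at $t=T$, it can be dropped, leaving
\[
\int_I \|\partial_t u\|_{L^2(\mu_1)}^2 \, d\mathcal{L} \, \leq \, \int_I (f,\partial_t u)_{L^2(\mu_1)} \, d\mathcal{L} \, .
\]
By Young's inequality this gives $\|\partial_t u\|_{L^2(I;L^2(\mu_1))} \leq \|f\|_{L^2(I;L^2(\mu_1))}$, and since $Lu = f-\partial_t u$ also $\|Lu\|_{L^2(I;L^2(\mu_1))} \leq 2\,\|f\|_{L^2(I;L^2(\mu_1))}$. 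Moreover, the energy identity of Proposition \ref{energy identity} with $s=0$ yields $\sup_{t\in I}\|u(t)\|_{L^2(\mu_1)} \leq \int_I\|f\|_{L^2(\mu_1)}\,d\mathcal{L}$, which serves to absorb lower-order terms below.

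The heart of the matter is the elliptic estimate: for $w\in C_c^\infty(\hs)$,
\[
\|D_{\!x}^2 w\|_{L^2(\mu_1)} + \|D_{\!x}^3 w\|_{L^2(\mu_3)} + \|D_{\!x}^4 w\|_{L^2(\mu_5)} \, \leq \, c\,\bigl(\|Lw\|_{L^2(\mu_1)} + \|w\|_{L^2(\mu_1)}\bigr) \, ,
\]
to be extended afterwards to the relevant weighted Sobolev class by (\ref{density}). To prove it I would expand $Lw = x_n^{\,2}\Delta^2 w$ plus terms of order at most three, each carrying an extra power of $x_n$, pair $Lw$ with a suitable multiple of $x_n^{\,3}\Delta^2 w$ in the unweighted $L^2$ inner product, and integrate by parts; the weight $x_n^{\,3}$ is chosen so that the principal term becomes $\|\nabla\Delta w\|_{L^2(\mu_5)}^2$ up to curvature and (vanishing) boundary contributions. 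The cross terms and the remaining components of $D_{\!x}^3 w$, $D_{\!x}^4 w$ are then controlled via the identity (\ref{auxiliary formula}) and its third- and fourth-order analogues (derived in the same way, bookkeeping the derivatives hitting the weights), together with Hardy inequalities $\int_H |v|^2 x_n^{\,\sigma-2}\,dx \leq C_\sigma\int_H |\partial_{x_n}v|^2 x_n^{\,\sigma}\,dx$, valid for $\sigma>1$ without any boundary condition, used with $\sigma=3$ and $\sigma=5$ to trade a power of the weight for a normal derivative; purely tangential derivatives are harmless because $L$ commutes with $\partial_{x_j}$ for $j<n$. Applying this with $w=u(t)$ and $Lu(t)=f(t)-\partial_t u(t)$, then integrating in $t\in I$ and inserting the bounds of the previous paragraph, yields the claimed estimate.

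I expect the main obstacle to be precisely the weighted elliptic estimate: arranging the iterated integrations by parts so that every derivative landing on $w$ is compensated by exactly the right power of $x_n$ (the weights $\mu_1,\mu_3,\mu_5$ being the admissible ones in the sense of (\ref{density})), verifying that all boundary integrals over $\{x_n=0\}$ vanish, and closing the estimate with the correct chain of Hardy- and Poincar\'e-type inequalities to absorb the lower-order remainders. By comparison, the parabolic step is a routine energy computation once the self-adjointness of $L$ on $L^2(\mu_1)$ is in hand.
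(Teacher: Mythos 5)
Your parabolic step matches the paper's: test against (a regularization of) $\partial_t u$, use the symmetry of $L$ on $L^2(\mu_1)$ to identify $(Lu,\partial_t u)_{L^2(\mu_1)}$ with half the time derivative of the spatial energy $\|\Delta u\|_{L^2(\mu_3)}^2 + 4\|\nabla' u\|_{L^2(\mu_1)}^2$, and drop the nonnegative boundary term at $t=T$. The form identity you write for $(L\varphi,\psi)_{L^2(\mu_1)}$ is also correct. The divergence is in the elliptic step, where you take a genuinely different route from the paper, and where I believe the sketch does not close.

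The paper does not prove the weighted elliptic a~priori bound by integration by parts at all. It exploits the factorization $L = L_1 L_1$ with $L_1\varphi = -x_n^{-1}\nabla\!\cdot\!(x_n^{\,2}\nabla\varphi)$, takes a tangential Fourier transform, rescales $z=|\xi|\,x_n$, and is thereby reduced to a one-dimensional Bessel-type ODE in $z$ whose Green kernel is assembled from $z^{-1/2}I_{1/2}(z)$ and $z^{-1/2}K_{1/2}(z)$; a Schur test in $L^1(\R_+,\mu_1)$ and $L^\infty(\R_+)$, followed by Marcinkiewicz interpolation, then gives weighted $L^2$ resolvent bounds \emph{without any lower-order remainder}, which are iterated (and differentiated in $z$, raising the Bessel order) to control all of $D_{\!x}^2 u$, $D_{\!x}^3 u$, $D_{\!x}^4 u$ in the weights $\mu_1,\mu_3,\mu_5$.

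Against your sketch I have two concrete objections. First, your elliptic estimate carries a residual $\|w\|_{L^2(\mu_1)}$ on the right. Plugging in $w=u(t)$ and integrating over $I$, absorbing $\int_I\|u(t)\|_{L^2(\mu_1)}\,dt$ via $\sup_t\|u(t)\|_{L^2(\mu_1)}$ introduces a factor $T$, but the proposition's constant is $c=c(n)$, independent of $T$. The paper's resolvent bound produces the sharp estimate directly; one could in principle scale the zero-order term away a~posteriori via (\ref{linear scaling}), since $\|w\|_{L^2(\mu_1)}$ is subcritical relative to $\|Lw\|_{L^2(\mu_1)}$, but you do not invoke this, and the Hardy/Poincar\'e absorptions you propose do not deliver a $T$-independent bound on their own (there is no Poincar\'e inequality on the unbounded domain $H$). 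Second, your multiplier does not produce the claimed cancellation: expanding $Lw = x_n^{\,2}\Delta^2 w + 6x_n\partial_{x_n}\Delta w + 2\Delta w + 4\partial_{x_n}^2 w$, the pairing $\int_H Lw\cdot x_n^{\,3}\Delta^2 w\,dx$ has principal term $\|\Delta^2 w\|_{L^2(\mu_5)}^2$, not $\|\nabla\Delta w\|_{L^2(\mu_5)}^2$; an integration by parts merely reproduces that term, and the subprincipal cross terms such as $6\int_H x_n^{\,4}\Delta^2 w\,\partial_{x_n}\Delta w\,dx$ are of the same weighted order as the principal one and carry no evident favorable sign. You flag the bookkeeping as the crux; I agree, and exactly there your sketch provides nothing concrete. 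The paper's factorization/Fourier/Bessel reduction sidesteps that bookkeeping altogether, which is its main advantage.
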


We next state a sort of analogue to the global energy estimate for local solutions, i.e.\ energy solutions defined in
\[
Q_R(t,x) \, = \, (t-R^4, t] \times B_R(x) \, ,
\]
the half-closed circular cylinder of radius $R>0$, top center $(t,x)$ and height $R^4$. In the following we refer to $Q_R(t,x)$ simply as (fourth-order) parabolic cylinder.

\begin{rem}
\label{local energy estimate}
If $u$ solves the homogeneous equation on $Q_R(t_0,x_0)$, then there exists an $\eps > 0$ such that
\[
\| \partial_t^l \partial_x^\alpha u \|_{L^2(Q_{\eps R}(t_0, x_0)), \mu_1} \, \leq \, c(n,l,\alpha) \, R^{-4l-|\alpha|} (R + \sqrt{x_{0,n}})^{-|\alpha|} \, \| u \|_{L^2(Q_R(t_0,x_0), \mu_1)}
\]
for all $l \in \N_0$ and all multi-indices $\alpha$. The proof essentially relies on the scaling behavior of the linear equation. Indeed, scaling reduces\footnote{For more details see the proof of Proposition \ref{pointwise estimate} in the appendix, section \ref{proofs}.} the estimate to $Q_R(0, e_n)$ with $R \ll 1$ or $Q_1(0,0)$.
\end{rem}

The ``derivative factor'' involving $R$ and $x_{0,n}$ is an important quantity. In the following we will abbreviate it by
\begin{equation}
\label{derivative factor}
\delta_{l, \alpha}(R, x_0) \, = \, R^{-4l-|\alpha|} (R + \sqrt{x_{0,n}})^{-|\alpha|} \, .
\end{equation}

\subsection{Pointwise estimates} % pointwise estimate (by initial data)
\label{pointwise estimates}
Our goal here is to prove a pointwise estimate for solutions of the homogeneous equation on the cylinder $Q_R(t_0, x_0)$. At the same time, this shows that any local solution is indeed smooth, at least on a smaller cylinder.

\begin{prop}
\label{pointwise estimate}
Suppose $u$ satisfies the equation $\partial_t u + Lu = 0$ on $Q_R(t_0, x_0)$ in the energy sense for some $(t_0, x_0) \in \R \times \hs$. Then for any $l\in\N_{0}$ and any multi-index $\alpha$ there exists $c=c(n,l,\alpha)$ such that
\[
|\partial_t^l \partial_x^\alpha u(t_0, x_0)| \, \leq \, c \; \frac{\delta_{l, \alpha}(R, x_0)}{R^2 \, |B_R(x_0)|_1^{\,\frac{1}{2}}} \; \| u \|_{L^2(Q_R(t_0, x_0), \, \mu_1)} \, .
\]
\end{prop}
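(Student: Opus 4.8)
The plan is to deduce the pointwise bound of Proposition \ref{pointwise estimate} from the local energy estimate of Remark \ref{local energy estimate} by a Sobolev-type embedding argument adapted to the homogeneous-type space $(\hs, d, \mu_1)$, followed by a standard scaling normalization. First I would fix $(t_0,x_0)$ and observe that, thanks to Remark \ref{local energy estimate}, all space-time derivatives $\partial_t^l \partial_x^\alpha u$ are themselves in $L^2$ of a slightly smaller cylinder $Q_{\eps R}(t_0,x_0)$ with a quantitative bound carrying the derivative factor $\delta_{l,\alpha}(R,x_0)$; in particular $u$ is smooth on the interior and it suffices to control the sup-norm of $\partial_t^l \partial_x^\alpha u$ at the single point $(t_0,x_0)$ by the $L^2(\mu_1)$-norm of a few more derivatives on $Q_{\eps R}$. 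The key point is therefore a local Morrey/Sobolev inequality of the form
\[
|w(t_0,x_0)| \, \leq \, c \, \sum_{4l+|\alpha| \leq N} \delta_{l,\alpha}(r,x_0) \, r^{-2} \, |Q_r(t_0,x_0)|_1^{-\frac{1}{2}} \, \| \partial_t^l \partial_x^\alpha w \|_{L^2(Q_r(t_0,x_0),\mu_1)}
\]
for smooth $w$, with $N$ depending only on $n$; applying this to $w = u$ on $Q_{\eps R}$ and then absorbing the extra derivatives via Remark \ref{local energy estimate} gives the claim, since $|Q_r(t_0,x_0)|_1 \sim r^4 |B_r(x_0)|_1$ and the product of two derivative factors recombines (up to constants depending on $\eps$) into a single $\delta_{l,\alpha}(R,x_0)$.

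The cleanest way to get this embedding is by scaling. Using the invariance \eqref{linear scaling} together with the Carnot--Carath\'eodory structure, one reduces to two model configurations: the ``interior'' case, where after rescaling $x_{0,n} \sim 1$ and $R \lesssim 1$ so that on the relevant cylinder $x_n$ is bounded above and below and the weight $\mu_1$ is comparable to Lebesgue measure, making the estimate a classical parabolic Sobolev embedding on a Euclidean cylinder; and the ``boundary'' case $Q_1(0,0)$, where $x_n \to 0$ and the degeneracy of the weight is genuine. In the first case nothing new is needed. In the second case I would invoke the fact, recalled in Section \ref{preliminaries}, that $(\hs,d,\mu_\sigma)$ is a space of homogeneous type satisfying the doubling property \eqref{doubling condition} and the ball-measure asymptotics \eqref{ball measure}; combined with the weighted Poincar\'e/Sobolev inequalities available for $\mu_1$ on such cylinders (these are exactly the tools developed in \cite{DH98, K99} for the porous-medium von Mises setting), iterating over the chain of weights $\mu_1, \mu_3, \mu_5, \dots$ that appears when one differentiates $L$ yields the $L^2 \to L^\infty$ gain once enough derivatives are included. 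A Moser-type iteration over a nested family $Q_{r_k}$ with $r_k \downarrow \eps' R$ is an equally viable route and perhaps more robust against the boundary degeneracy.

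The main obstacle is the degeneracy at $\partial H$: the weight $x_n$ both vanishes and forces the ``anisotropic'' derivative factor $(R+\sqrt{x_{0,n}})^{-|\alpha|}$ rather than the naive $R^{-|\alpha|}$, so one must be careful that the Sobolev constants do not blow up as $x_{0,n} \to 0$. Concretely, when $x_{0,n} \lesssim R^2$ the ball $B_R(x_0)$ reaches the boundary and behaves like a half-ball of Euclidean radius $\sim R^2$ in the $x_n$-direction but $\sim R$ in the $x'$-directions (this is precisely what \eqref{ball topology} and \eqref{ball measure} encode), and the embedding constant must be shown to be uniform across this whole regime. This is exactly handled by working intrinsically in the metric measure space $(\hs,d,\mu_1)$ rather than in Euclidean coordinates: the doubling property and the intrinsic Poincar\'e inequality are scale- and location-invariant, so the resulting Sobolev constant depends only on $n$ (and on $\eps$, which is itself fixed by $n$ via Remark \ref{local energy estimate}). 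Once the embedding is established in the homogeneous-type formulation, the passage back to the stated inequality — replacing the nested radii by $R$ and collapsing the two derivative factors — is routine bookkeeping with \eqref{ball measure}. I would relegate the full details of the iteration and the weighted Poincar\'e inequalities to Section \ref{proofs}, as the author indicates.
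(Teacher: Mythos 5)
Your proposal follows essentially the same route as the paper: translation/scaling normalization, a case-split into interior ($R \lesssim \sqrt{x_{0,n}}$) and boundary ($R \gtrsim \sqrt{x_{0,n}}$) cylinders, a Morrey-type embedding in each normalized cylinder, and absorption of the extra derivatives via the local energy estimates of Lemmas \ref{local energy estimate in 1} and \ref{local energy estimate in 0} (equivalently Remark \ref{local energy estimate}). The one device you leave vague, the weighted embedding in the boundary case, the paper handles concretely by applying the classical \emph{unweighted} Morrey inequality (\ref{Morrey-type inequality}) in $Q_\delta(0,0)$ and then using Hardy's inequality to trade the unweighted $L^2$-norms for $L^2(\mu_2)$-norms of one extra derivative, after which $\mu_2 \lesssim \mu_1$ since $x_n \lesssim 1$ on $B_\delta(0)$; no Moser iteration or abstract intrinsic Poincar\'e inequality on the homogeneous-type space is invoked.
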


We now consider the initial value problem on the whole half space again. Clearly, any such solution on $I \times \hs$ is also a solution on $Q_R(t_0, x_0)$ for any $x_0 \in \hs$ provided we have $(t_0-R^4, t_0] \subset I$.

\begin{cor}
\label{pointwise estimate by initial data}
For $T>0$ let $I = (0, T)$ be an open interval, $l\in\N_0$ and $\alpha$ be a multi-index. We further suppose that $u$ is an energy solution of $\partial_t u + Lu=0$ on $[0,T) \times \hs$ with $u(0) = g \in L^2(H, \mu_1)$. Then we have
\[
|\partial_t^l \partial_x^\alpha u(t,x)| \, \leq \, c(n,l,\alpha) \, \delta_{l,\alpha}(\sqrt[4]{t},x) \, |B_{\!\sqrt[4]{t}}(x)|_1^{-\frac{1}{2}} \, \| g \|_{L^2(\mu_1)}
\]
for all $(t,x) \in I \times \hs$.
\end{cor}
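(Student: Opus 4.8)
The plan is to reduce the statement to the interior bound of Proposition \ref{pointwise estimate} by choosing the cylinder radius so that the parabolic cylinder just fills the available time interval, and then to control the resulting local $L^2(\mu_1)$-norm of $u$ by the initial datum via the energy identity of Proposition \ref{energy identity}.

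First I would fix $(t,x) \in I \times \hs$ and set $R = \sqrt[4]{t}$. Then $Q_R(t,x) = (t-R^4,t] \times B_R(x) = (0,t] \times B_R(x)$, and since $(0,t] \subset (0,T) = I$, the remark immediately preceding the corollary shows that the restriction of $u$ to $Q_R(t,x)$ is an energy solution of the homogeneous equation there; in particular Proposition \ref{pointwise estimate} applies and the pointwise derivatives $\partial_t^l \partial_x^\alpha u(t,x)$ are well defined. Applying that proposition on this cylinder gives
\[
|\partial_t^l \partial_x^\alpha u(t,x)| \, \leq \, c(n,l,\alpha) \, \frac{\delta_{l,\alpha}(\sqrt[4]{t},x)}{(\sqrt[4]{t})^2 \, |B_{\sqrt[4]{t}}(x)|_1^{\frac{1}{2}}} \, \| u \|_{L^2(Q_{\sqrt[4]{t}}(t,x),\,\mu_1)} \, .
\]

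Next I would bound the norm on the right. Since $B_R(x) \subseteq \hs$ and the time interval $(0,t]$ has length $R^4 = t$, Fubini gives $\| u \|_{L^2(Q_R(t,x),\mu_1)}^2 \leq \int_0^t \| u(\tau) \|_{L^2(\hs,\mu_1)}^2 \, d\mathcal{L}$. Applying Proposition \ref{energy identity} with $f = 0$ and $s = 0$, the dissipation term $\int_0^\tau \| \Delta u \|_{L^2(\mu_3)}^2 + 4 \| \nabla' u \|_{L^2(\mu_1)}^2 \, d\mathcal{L}$ is nonnegative, so $\| u(\tau) \|_{L^2(\mu_1)} \leq \| u(0) \|_{L^2(\mu_1)} = \| g \|_{L^2(\mu_1)}$ for every $\tau \in [0,T)$. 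Hence $\| u \|_{L^2(Q_R(t,x),\mu_1)} \leq R^2 \, \| g \|_{L^2(\mu_1)} = (\sqrt[4]{t})^2 \, \| g \|_{L^2(\mu_1)}$, and inserting this into the previous display cancels the factor $(\sqrt[4]{t})^2$, yielding precisely the asserted inequality.

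There is no serious obstacle here; the argument is a short combination of two results already established. The only points requiring care are the choice $R = \sqrt[4]{t}$ (a larger radius would push the cylinder outside $I \times \hs$, where $u$ need not solve the equation), and the fact that the energy identity of Proposition \ref{energy identity} — hence the monotonicity of $\tau \mapsto \| u(\tau) \|_{L^2(\mu_1)}$ — is available only because $u$ is a solution on the \emph{entire} half space $\hs$, where the density result (\ref{density}) holds.
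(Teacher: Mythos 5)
Your proof is correct and follows exactly the route the paper takes: the paper's proof is a one-line appeal to Proposition \ref{pointwise estimate} with $R=\sqrt[4]{t}$ and Proposition \ref{energy identity}, and you have simply spelled out the intermediate step $\|u\|_{L^2(Q_R(t,x),\mu_1)} \leq R^2 \|g\|_{L^2(\mu_1)}$ that cancels the $R^2$ in the denominator. Your parenthetical remarks — that $R$ cannot be taken larger without the cylinder exiting $I\times\hs$, and that the monotonicity of $\tau\mapsto\|u(\tau)\|_{L^2(\mu_1)}$ hinges on $u$ being a global solution — are exactly the right points to flag.
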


\begin{proof}
The estimate follows from Proposition \ref{pointwise estimate} with $R = \sqrt[4]{t}$ and Propostion \ref{energy identity}.
\end{proof}

The crucial ingredient in this proof has been the property that $\| u(t) \|_{L^2(\mu_1)}$ decreases in $t$. In the next step we want to extend this norm decrease of solutions by an exponential function depending on the intrinsic metric $d$ that has been introduced in section \ref{preliminaries}.

\begin{lemma}
\label{exponential norm decrease}
Suppose $\Psi \in C^2(H)$ satisfies the conditions $\sqrt{x_n} \, |\nabla \Psi(x)| \leq c_L$ and $x_n |D_{\!x}^2 \Psi(x)| \leq c_L$ for all $x \in H$. If $u$ is an energy solution of $\partial_t u + Lu=0$ on $[0,T) \times \hs$ with $u(0) = g \in L^2(\mu_1)$, then we have
\[
\| e^\Psi u(t) \|_{L^2(\mu_1)} \, \leq \, c \, e^{c_n (c_L^{\,2} + c_L^{\,4}) t} \, \|e^\Psi g \|_{L^2(\mu_1)}
\]
for all $t \in [0,T]$.
\end{lemma}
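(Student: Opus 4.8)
The plan is to derive a differential inequality for the weighted energy $E(t) = \tfrac12 \| e^\Psi u(t) \|_{L^2(\mu_1)}^2$ and then apply a Gronwall-type argument. The natural test function is $\varphi = e^{2\Psi} u$, localized and time-regularized exactly as in the proof of Proposition \ref{energy identity} so that it becomes a legitimate test function; since $\Psi \in C^2(H)$ has at most the prescribed growth, $e^{2\Psi} u$ lies in the right weighted Sobolev space for a.e.\ time. Plugging this into the weak formulation (\ref{distributional solution}) with $f = 0$ and carrying out the integration by parts produces
\[
\frac{d}{dt} \, \tfrac12 \| e^\Psi u \|_{L^2(\mu_1)}^2 \, + \, \int_H x_n^{\,3} \, \Delta u \, \Delta(e^{2\Psi} u) \, dx \, + \, 4 \int_H x_n \, \nabla' u \cdot \nabla'(e^{2\Psi} u) \, dx \, = \, 0 \, .
\]
Expanding the derivatives of $e^{2\Psi} u$ by the Leibniz rule, each of the two spatial integrals splits into a ``good'' nonnegative term ($\int x_n^{\,3} (\Delta u)^2 e^{2\Psi}$ and $4\int x_n |\nabla' u|^2 e^{2\Psi}$, up to the correction coming from $\Delta u$ versus $D_x^2 u$ as in (\ref{auxiliary formula})) plus ``error'' terms in which at least one derivative falls on $\Psi$.

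The second step is to estimate these error terms. Every error term carries a factor $\nabla\Psi$ or $D_x^2\Psi$, hence by hypothesis a factor $x_n^{-1/2} c_L$ or $x_n^{-1} c_L$, which exactly compensates the degenerate weights: a term like $\int x_n^{\,3}\, \Delta u \,\nabla\Psi\cdot\nabla u\, e^{2\Psi}$ is bounded by $c_L \int x_n^{\,5/2} |\Delta u|\,|\nabla u|\, e^{2\Psi}$, and a term like $\int x_n^{\,3}\, \Delta u\, |\nabla\Psi|^2 u\, e^{2\Psi}$ by $c_L^{\,2}\int x_n^{\,2}|\Delta u|\,|u|\,e^{2\Psi}$. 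One then applies Young's inequality with a small parameter to absorb the top-order pieces ($\int x_n^{\,3}(\Delta u)^2 e^{2\Psi}$ and its lower analogue $\int x_n |\nabla' u|^2 e^{2\Psi}$) into the good terms on the left, at the cost of constants of the form $c_n(c_L^{\,2}+c_L^{\,4})$ multiplying $\int x_n |u|^2 e^{2\Psi} = \|e^\Psi u\|_{L^2(\mu_1)}^2$; here one will also need an interpolation/Poincar\'e-type control of the intermediate quantity $\int x_n^{\,2}|\nabla u|^2 e^{2\Psi}$ in terms of the top order term and the zeroth order term, which is available via integration by parts in the spirit of (\ref{auxiliary formula}). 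After absorption we arrive at
\[
\frac{d}{dt}\, \tfrac12 \| e^\Psi u(t)\|_{L^2(\mu_1)}^2 \, \leq \, c_n\,(c_L^{\,2}+c_L^{\,4})\, \| e^\Psi u(t)\|_{L^2(\mu_1)}^2 \, .
\]

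The final step is Gronwall's inequality, which yields $\| e^\Psi u(t)\|_{L^2(\mu_1)}^2 \leq e^{2c_n(c_L^{\,2}+c_L^{\,4})t}\,\|e^\Psi g\|_{L^2(\mu_1)}^2$ and hence the claim (the constant $c$ absorbing the passage from the differential inequality and the regularization limits). The approximation scheme of Proposition \ref{energy identity} also provides the continuity $t\mapsto e^\Psi u(t)$ needed to make sense of the initial condition $u(0)=g$ in the limit $\eps,\delta\to 0$. I expect the main obstacle to be the bookkeeping of the error terms: one must check that after using the Cauchy--Schwarz splitting the degenerate weights $x_n$ always match so that the hypotheses $\sqrt{x_n}|\nabla\Psi|\le c_L$, $x_n|D_x^2\Psi|\le c_L$ suffice, and that no term of intermediate order ($\int x_n^{\,2}|\nabla u|^2 e^{2\Psi}$, which is not directly one of the coercive quantities) is left uncontrolled — this is where an auxiliary integration-by-parts identity analogous to (\ref{auxiliary formula}), but with the extra weight $e^{2\Psi}$ generating yet more $\nabla\Psi$ factors, has to be invoked carefully.
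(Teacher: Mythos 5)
Your proposal is correct and follows essentially the same route as the paper: test with $e^{2\Psi}u$, split the spatial integrals into coercive terms and $\nabla\Psi$/$D_x^2\Psi$-carrying errors, absorb the errors by Young's inequality plus a weighted Gagliardo--Nirenberg interpolation (and Hardy for the $\int e^{2\Psi}u^2\,dx$ piece), then Gronwall. The one bookkeeping simplification you might adopt from the paper is the substitution $v=e^\Psi u$: then identity (\ref{auxiliary formula}) and the interpolation $\|\nabla v\|_{L^2(\mu_2)}^2 \leq c_n\|v\|_{L^2(\mu_1)}\|D_x^2 v\|_{L^2(\mu_3)}$ apply verbatim to $v$, so the ``extra $\nabla\Psi$ factors'' you were worried about in the weighted version of (\ref{auxiliary formula}) never arise, and the coercive term appears directly as $-\|D_x^2 v\|_{L^2(\mu_3)}^2$.
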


To obtain a function $\Psi$ with the required properties, one has to construct a suitable approximation of the metric $d$ in terms of the equivalent quasimetric $\rho$ and use the relation (\ref{equivalence}).

\begin{rem}
\label{Lipschitz condition}
Suppose the indicated condition $x_n |\nabla \Psi(x)|^2 \leq c_L^{\,2}$ holds for all $x \in H$. Further let $\gamma: [a,b] \to H$ be the geodesic between the two points $x$ and $y$ which is parameterized by arc length, i.e.\ we may assume that $b=a+d(x,y)$ and $\gamma_{n}^{-1} |\gamma'|^2 \equiv 1$. By the fundamental theorem of calculus we then get
\[
|\Psi(x) - \Psi(y)| \, = \, \bigl| \int_a^b \nabla \Psi\bigl(\gamma(s)\bigr) \cdot \gamma'(s) \, ds \bigr| \, \leq \, c_L \, d(x,y) \, ,
\]
that is, $\Psi: (H,d) \to \R$ is Lipschitz continuous with Lipschitz constant $c_L$.
\end{rem}

\begin{proof}
Let $v=e^\Psi u$. Formally we multiply the equation by $e^\Psi v$, and then integrate with respect to $x \in H$ to get
\begin{align*}
\partial_t \| v \|_{L^2(\mu_1)}^2 \, = \, - 2 \int_H \bigl( 
&(\Delta v)^2 + 2 \, |\nabla \Psi|^2 \, v \, \Delta v + |\nabla\Psi|^4 \, v^2 - 4 \, |\nabla\Psi|^2 \, |\nabla v|^2 - (\Delta\Psi)^2 \, v^2 - \bigr. \\
\bigl. &- \, 2 \, \nabla\Psi\cdot\nabla v (\Delta\Psi) v \bigr) \, d \mu_3 - 8 \int_{H} \bigl( |\nabla' v|^2 - |\nabla' \Psi|^2 \, v^2 \bigr) \, d\mu_1 \, .
\end{align*}
Now using the Cauchy-Schwarz inequality, the first integral is bounded (up to some constant $c>0$) by
\[
- \| \Delta v \|_{L^2(\mu_3)}^2 + \| |\nabla\Psi|^2 v \|_{L^2(\mu_3)}^2 + \| \Delta \Psi \, v \|_{L^2(\mu_3)}^2 + \| \nabla\Psi \cdot \nabla v \|_{L^2(\mu_3)}^2 \, .
\]
Here we use the properties of the Lipschitz function $\Psi$ which, combined with identity (\ref{auxiliary formula}), implies that
\[
\partial_t \| v \|_{L^2(\mu_1)}^2 \, \lesssim \, - \| D_{\!x}^2 v \|_{L^2(\mu_3)}^2 + (c_L^{\,2} + c_L^{\,4}) \, \| v \|_{L^2(\mu_1)}^2 + c_L^{\,2} \, \| \nabla v \|_{L^2(\mu_2)}^2 + c_L^{\,2} \, \| v \|_{L^2}^2 \, .
\]

In order to bound $\| v \|_{L^2}$, we apply Hardy's inequality to get $\| v \|_{L^2} \lesssim \| \nabla v \|_{L^2(\mu_2)}$. Hence we arrive at
\[
\partial_t \| v \|_{L^2(\mu_1)}^2 \, \lesssim \, -(1 - c_L^{\,2} \, \eps) \, \| D_{\!x}^2 v \|_{L^2(\mu_3)}^2 + c_n (c_L^{\,2} + c_L^{\,4} + \frac{c_L^{\,2}}{\eps} \,) \, \| v \|_{L^2(\mu_1)}^2
\]
by means of
\[
\| \nabla v \|_{L^2(\mu_2)}^2 \, \leq \, c_n \, \| v \|_{L^2(\mu_1)} \| D_{\!x}^2 v \|_{L^2(\mu_3)} \, ,
\]
a weighted version of the Gagliardo-Nirenberg interpolation inequality. Taking $\eps \sim c_L^{-2}$ small, we thus have
\[
\partial_t \| v \|_{L^2(\mu_1)}^2 \, \lesssim \, - \| D_{\!x}^2 v \|_{L^2(\mu_3)}^2 + c_n (c_L^{\,2} + c_L^{\,4}) \, \| v \|_{L^2(\mu_1)}^2 \, .
\]
With
\[
F(t) \, = \, e^{-c_n (c_L^{\,2} + c_L^{\,4}) t} \| v(t) \|_{L^2(\mu_1)}^2 + \int_0^t e^{-c_n (c_L^{\,2} + c_L^{\,4}) \tau} \| D_{\!x}^2 v(\tau) \|_{L^2(\mu_3)}^2 \, d\tau \, ,
\]
we find $\partial_t F(t) \leq 0$ giving $F(t) \leq F(0) = \| e^\Psi g \|_{L^2(\mu_1)}^2$ for any $t \in [0, T]$ as desired.
\end{proof}

Lemma \ref{exponential norm decrease} enables us to prove the following result.

\begin{prop}
\label{pointwise exponential estimate}
Let $I = (0, T)$ be open, $l\in\N_0$ and $\alpha$ be any multi-index. We further suppose that $\Psi$ is as in Lemma \ref{exponential norm decrease} and $u$ is a solution of $\partial_t u + Lu=0$ on $[0,T) \times \hs$ with $u(0) = g \in L^2(\mu_1)$. Then
\[
|\partial_t^l \partial_x^\alpha u(t,x)| \, \leq \, c(n,l,\alpha) \; \frac{\delta_{l,\alpha}(\sqrt[4]{t}, x)}{|B_{\!\sqrt[4]{t}}(x)|_1^{\,\frac{1}{2}}} \; e^{c_n (c_L^{\,2} + c_L^{\,4}) t - \Psi(x)}  \| e^\Psi g \|_{L^2(\mu_1)}
\]
for all $(t,x) \in \bar{I} \setminus \{0\} \times \hs$.
\end{prop}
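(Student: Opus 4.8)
The plan is to combine the localized pointwise estimate of Proposition~\ref{pointwise estimate} with the exponentially weighted energy decay of Lemma~\ref{exponential norm decrease}, applied to a shifted solution. Fix $(t,x) \in \bar I \setminus \{0\} \times \hs$ and set $R = \sqrt[4]{t}$. The key observation is that $u$ restricted to the parabolic cylinder $Q_R(t,x) = (0,t] \times B_R(x)$ solves the homogeneous equation there, so Proposition~\ref{pointwise estimate} gives
\[
|\partial_t^l \partial_x^\alpha u(t,x)| \, \leq \, c \, \frac{\delta_{l,\alpha}(R,x)}{R^2 \, |B_R(x)|_1^{\,1/2}} \, \| u \|_{L^2(Q_R(t,x),\,\mu_1)} \, .
\]
It therefore suffices to bound the local $L^2$-norm of $u$ over $Q_R(t,x)$ by $R^2 \, e^{c_n(c_L^2+c_L^4)t - \Psi(x)} \, \| e^\Psi g \|_{L^2(\mu_1)}$, since the $R^2$ factor cancels.

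First I would control the weight $e^{-\Psi}$ on the spatial ball $B_R(x)$. By Remark~\ref{Lipschitz condition}, $\Psi$ is Lipschitz on $(H,d)$ with constant $c_L$, so for every $y \in B_R(x)$ we have $|\Psi(x) - \Psi(y)| \leq c_L \, d(x,y) < c_L R = c_L \sqrt[4]{t}$; hence $e^{-\Psi(y)} \leq e^{c_L R} e^{-\Psi(x)}$, i.e.\ $e^{-\Psi(x)} \geq e^{-c_L R} e^{-\Psi(y)}$ uniformly on $B_R(x)$. For $t$ ranging over the bounded interval $I$ (or after absorbing $c_L R$ into the exponential factor $e^{c_n(c_L^2+c_L^4)t}$, using $R = \sqrt[4]{t}$ and Young's inequality $c_L R \lesssim c_L^2 + \sqrt{t} \leq c_L^2 + c_L^4 + t$ up to constants), this prefactor is harmless and can be folded into the stated constant. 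Consequently,
\[
\| u \|_{L^2(Q_R(t,x),\,\mu_1)} \, = \, \Bigl( \int_0^t \!\! \int_{B_R(x)} |u|^2 \, d\mu_1 \, d\tau \Bigr)^{1/2} \, \lesssim \, e^{c_L R - \Psi(x)} \Bigl( \int_0^t \!\! \int_{B_R(x)} e^{2\Psi(y)} |u(\tau,y)|^2 \, d\mu_1(y) \, d\tau \Bigr)^{1/2} .
\]

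Next I would bound the last integral by enlarging the domain of integration to all of $\hs$ and invoking Lemma~\ref{exponential norm decrease}: for each $\tau \in (0,t]$,
\[
\int_{B_R(x)} e^{2\Psi} |u(\tau)|^2 \, d\mu_1 \, \leq \, \| e^\Psi u(\tau) \|_{L^2(\mu_1)}^2 \, \leq \, c \, e^{2 c_n(c_L^2+c_L^4)\tau} \, \| e^\Psi g \|_{L^2(\mu_1)}^2 \, \leq \, c \, e^{2 c_n(c_L^2+c_L^4)t} \, \| e^\Psi g \|_{L^2(\mu_1)}^2 .
\]
Integrating in $\tau$ over $(0,t)$ produces a factor $t = R^4$, so that $\| u \|_{L^2(Q_R(t,x),\,\mu_1)} \lesssim R^2 \, e^{c_L R + c_n(c_L^2+c_L^4)t - \Psi(x)} \, \| e^\Psi g \|_{L^2(\mu_1)}$. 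Substituting this into the pointwise estimate above, the $R^2$ in the numerator cancels the $R^{-2}$, and after absorbing $c_L R$ into the exponent (as noted) we obtain exactly the claimed bound with a new dimensional constant $c(n,l,\alpha)$.

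The only genuinely delicate point is the interplay between the spatial ball $B_R(x)$ appearing in the localized estimate and the exponential weight: one must be sure that the Lipschitz oscillation of $\Psi$ over a $d$-ball of radius $R = \sqrt[4]{t}$ is controlled by a quantity that the target exponential $e^{c_n(c_L^2+c_L^4)t}$ can absorb. This is exactly where the equivalence~(\ref{equivalence}) and the Lipschitz bound of Remark~\ref{Lipschitz condition} are used; everything else is a routine combination of two estimates already in hand. A minor bookkeeping issue is the endpoint $t = T$ (where $(t,x) \in \bar I$ is allowed): Lemma~\ref{exponential norm decrease} already covers $t \in [0,T]$, and Proposition~\ref{pointwise estimate} applies on $Q_R(t,x)$ as long as $(t-R^4, t] = (0,t] \subseteq [0,T)$ after the obvious half-open adjustment, so no difficulty arises there.
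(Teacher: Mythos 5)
Your argument follows essentially the same route as the paper: you bound the pointwise value by the local $L^2$ norm via Proposition~\ref{pointwise estimate} with $R=\sqrt[4]{t}$, insert the factor $e^{\Psi-\Psi}$, bound $\sup_{B_R(x)}e^{-\Psi}$ via Lipschitz continuity of $\Psi$, pass to $\|e^\Psi u(\tau)\|_{L^2(\mu_1)}$ on all of $\hs$, apply Lemma~\ref{exponential norm decrease}, and observe that the time integration produces the $R^2$ that cancels the $R^{-2}$ in Proposition~\ref{pointwise estimate}. That is exactly the mechanism in the paper.

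One step, however, does not hold as written. To absorb $e^{c_L R}=e^{c_L\sqrt[4]{t}}$ into $c(n,l,\alpha)\,e^{c_n(c_L^2+c_L^4)t}$, you invoke Young's inequality as $c_L R \lesssim c_L^2 + \sqrt{t} \leq c_L^2 + c_L^4 + t$. The second inequality is false uniformly: $\sqrt{t}\leq c_L^4 + t$ fails when $c_L$ is small (e.g.\ $c_L=10^{-2}$, $t=10^{-2}$), and more fundamentally $\sqrt{t}/\bigl((c_L^2+c_L^4)t\bigr)\to\infty$ as $c_L\to 0$, so no universal constant $c(n,l,\alpha)$ (which must be independent of $c_L$ and $t$) can close the gap; likewise ``$t$ ranges over a bounded interval'' would make the constant depend on $T$ and $c_L$, which is not allowed. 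The correct absorption is the simple dichotomy the paper uses: if $c_L\sqrt[4]{t}\geq 1$ then $c_L\sqrt[4]{t}\leq (c_L\sqrt[4]{t})^4 = c_L^4 t$, hence $e^{c_L\sqrt[4]{t}}\leq e^{c_L^4 t}$; if $c_L\sqrt[4]{t}<1$ then $e^{c_L\sqrt[4]{t}}<e$, which is absorbed into $c(n,l,\alpha)$. With this replacement your proof is complete.
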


\begin{proof}
We argue as in the proof of Corollary \ref{pointwise estimate by initial data}, with the additional factor $1 = e^{\Psi-\Psi}$ in the norm, to get
\[
|\partial_t^l \partial_x^\alpha u(t , x)| \, \leq \, \ \frac{\delta_{l,\alpha}(\sqrt[4]{t}, x)}{|B_{\!\sqrt[4]{t}}(x)|_1^{\,\frac{1}{2}}} \sup_{z\in B_{\!\sqrt[4]{t}}(x)} e^{-\Psi(z)} \sup_{\tau \in (0,t)} \| e^{\Psi} u(\tau) \|_{L^2(\mu_1)} \, .
\] 
Now we can use Lemma \ref{exponential norm decrease} to factor in the initial value. Eventually, the Lipschitz continuity of $\Psi$ guarantees that
\[
e^{\Psi(x)-\Psi(z)} \, \leq \, e^{c_L \sqrt[4]{t}} \, \leq \, 
\begin{cases}
e^{c_L^{\,4} t} 	& \text{if} \; c_L \sqrt[4]{t} \geq 1 \\
e 			& \text{otherwise}
\end{cases}
\]
for all $z \in B_{\!\sqrt[4]{t}}(x)$, giving the estimate the appearance stated in the proposition.
\end{proof}

Note that Proposition \ref{pointwise exponential estimate} implies Corollary \ref{pointwise estimate by initial data} by setting $c_L=0$. The exponential function, however, enables us to obtain a pointwise control by rough initial data, i.e.\ initial data with finite Lipschitz norm
\[
\| g \|_{\dot{C}^{0,1}(H)} = \| \nabla g \|_{L^{\infty}(H)} \, .
\]

\begin{prop}
\label{estimate by rough initial data}
Let $I = (0,T) \subseteq \R$, $l\in\N_0$ and $\alpha$ be any multi-index with either $l \neq 0$ or $\alpha \neq 0$. If $u$ is an energy solution of $\partial_t u+Lu=0$ on $[0, T) \times \hs$ with $u(0) = g \in L^2(\mu_1)$, then we have the estimate
\[
|\partial_t^l \partial_x^\alpha u(t,x)| \, \leq \, c(n, l,\alpha) \, \delta_{l,\alpha}(\sqrt[4]{t},x) \, \sqrt[4]{t} \, (\sqrt[4]{t} + \sqrt{x_n}) \, \| g \|_{\dot{C}^{0,1}}
\]
for all $(t,x) \in  \bar{I} \setminus \{0\} \times \hs$.
\end{prop}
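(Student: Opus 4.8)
The plan is to deduce the estimate from the exponential pointwise bound of Proposition \ref{pointwise exponential estimate}, applied to $u$ minus a suitable constant, after a preliminary rescaling. The key elementary point is that for $l\neq0$ or $\alpha\neq0$ the operator $\partial_t^l\partial_x^\alpha$ annihilates constants, and so does $L$; hence, fixing $x\in\hs$ and setting $w:=u-g(x)$, the function $w$ still solves $\partial_tw+Lw=0$, one has $\partial_t^l\partial_x^\alpha w=\partial_t^l\partial_x^\alpha u$, and $w(0)=g-g(x)$ — which, unlike $g$ itself, can be controlled in an exponentially weighted norm purely in terms of $\|g\|_{\dot C^{0,1}}$.

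First I would reduce to $t=1$. The linear equation is invariant under the scaling $u\mapsto u_\lambda$ of (\ref{linear scaling}), with $u_\lambda(0,\cdot)=\lambda^{-2}g(\lambda\,\cdot)$, so $\|u_\lambda(0)\|_{\dot C^{0,1}}=\lambda^{-1}\|g\|_{\dot C^{0,1}}$, and $\partial_t^l\partial_x^\alpha u_\lambda=\lambda^{2l+|\alpha|-2}\,(\partial_t^l\partial_x^\alpha u)\circ T_\lambda$; a short computation of how $\delta_{l,\alpha}(\sqrt[4]{t},x)$ and $\sqrt[4]{t}\,(\sqrt[4]{t}+\sqrt{x_n})$ transform under $T_\lambda$ shows that the asserted inequality is invariant under this scaling. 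Taking $\lambda=\sqrt{t}$ therefore reduces the statement to the case $t=1$, i.e.\ to
\[
|\partial_t^l\partial_x^\alpha u(1,x)| \, \leq \, c(n,l,\alpha)\,(1+\sqrt{x_n})^{1-|\alpha|}\,\|g\|_{\dot C^{0,1}} \qquad \text{for all } x\in\hs \, ,
\]
since $\delta_{l,\alpha}(1,x)=(1+\sqrt{x_n})^{-|\alpha|}$. This reduction must be made \emph{before} invoking Proposition \ref{pointwise exponential estimate}: the factor $e^{c_n(c_L^{\,2}+c_L^{\,4})t}$ there is harmless only when $c_L\sim t^{-1/4}$ and $t$ is bounded, and at $t=1$ one may simply fix $c_L=\eps_0$ for a small constant $\eps_0$.

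Now fix $x\in\hs$ and let $\Psi\in C^2(H)$ be a smooth approximation of $z\mapsto-\eps_0\,d(z,x)$, constructed as indicated after Lemma \ref{exponential norm decrease} and normalized so that $\Psi(x)=O(1)$; it satisfies $\sqrt{z_n}\,|\nabla\Psi|\leq\eps_0$, $z_n\,|D_{\!x}^2\Psi|\leq\eps_0$, and $e^{2\Psi(z)}\lesssim e^{-c'd(z,x)}$ with $c'\sim\eps_0>0$. Although $w(0)=g-g(x)\notin L^2(\mu_1)$, the function $e^\Psi w(\tau)$ lies in $L^2(\mu_1)$ for every $\tau$: indeed $u(\tau)\in L^2(\mu_1)$ by Proposition \ref{energy identity}, $e^\Psi$ is bounded, and $e^{2\Psi}\in L^1(\mu_1)$ because the weight decays exponentially in $d(\cdot,x)$. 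The finiteness of $\|e^\Psi w(\tau)\|_{L^2(\mu_1)}$ is the only property of the initial datum used in the proofs of Lemma \ref{exponential norm decrease} and Proposition \ref{pointwise exponential estimate} — all differential terms there only see $\nabla w=\nabla u$ and $D_{\!x}^2w=D_{\!x}^2u$ — so those arguments apply verbatim to $w$ and give, at $t=1$ with $c_L=\eps_0$,
\[
|\partial_t^l\partial_x^\alpha u(1,x)| \, \leq \, c(n,l,\alpha)\,\frac{\delta_{l,\alpha}(1,x)}{|B_1(x)|_1^{\,1/2}}\,\bigl\| e^\Psi(g-g(x)) \bigr\|_{L^2(\mu_1)} \, ,
\]
the constant absorbing the now $O(1)$ factors $e^{c_n(\eps_0^{\,2}+\eps_0^{\,4})}$, $e^{\eps_0}$ and $e^{-\Psi(x)}$.

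It remains to estimate $\|e^\Psi(g-g(x))\|_{L^2(\mu_1)}$. Since $H$ is convex and $\|g\|_{\dot C^{0,1}}=\|\nabla g\|_{L^\infty(H)}$, we have $|g(z)-g(x)|\leq\|g\|_{\dot C^{0,1}}|z-x|$, so it suffices to bound $\int_{\hs}e^{2\Psi}|z-x|^2\,d\mu_1$. Splitting $\hs$ into $B_1(x)$ and the $d$-annuli $B_{2^{j+1}}(x)\setminus B_{2^j}(x)$, $j\geq0$, and using on the $j$-th annulus $e^{2\Psi}\lesssim e^{-c'2^j}$ together with $|z-x|\lesssim 2^j(2^j+\sqrt{x_n})$ from (\ref{ball topology}) and $|B_{2^{j+1}}(x)|_1\sim 2^{jn}(2^j+\sqrt{x_n})^{n+2}$ from (\ref{ball measure}), the $j$-th annulus contributes at most $c\,e^{-c'2^j}\,2^{j(n+2)}(2^j+\sqrt{x_n})^{n+4}$; this geometrically convergent series sums to at most $c\,(1+\sqrt{x_n})^{n+4}$, and $B_1(x)$ contributes $\lesssim (1+\sqrt{x_n})^2|B_1(x)|_1\lesssim(1+\sqrt{x_n})^{n+4}$ as well. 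Hence $\|e^\Psi(g-g(x))\|_{L^2(\mu_1)}\leq c\,\|g\|_{\dot C^{0,1}}(1+\sqrt{x_n})^{(n+4)/2}$; since $|B_1(x)|_1^{1/2}\sim(1+\sqrt{x_n})^{(n+2)/2}$ and $\delta_{l,\alpha}(1,x)=(1+\sqrt{x_n})^{-|\alpha|}$, substituting into the previous display yields the $t=1$ estimate, and undoing the scaling of the first step finishes the proof. I expect the delicate part to be exactly this last step — tracking the exact powers of $\sqrt[4]{t}+\sqrt{x_n}$ through (\ref{ball topology})–(\ref{ball measure}) — together with verifying carefully that the machinery of Proposition \ref{pointwise exponential estimate} tolerates the non-$L^2(\mu_1)$ datum $g-g(x)$.
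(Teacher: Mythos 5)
Your proof is correct and follows essentially the same route as the paper: subtract the constant $C=g(x)$ (using that $\partial_t^l\partial_x^\alpha$ with $(l,\alpha)\neq(0,0)$ annihilates constants), apply Proposition \ref{pointwise exponential estimate} with a Lipschitz weight $\Psi\sim-c_L\,d(\cdot,x)$, bound $\|e^\Psi(g-g(x))\|_{L^2(\mu_1)}$ via an annular decomposition using (\ref{ball topology})--(\ref{ball measure}), and scale. The only cosmetic difference is that you rescale to $t=1$ at the outset and take $c_L$ a fixed small constant, whereas the paper proves the estimate for all $t\leq1$ with the $t$-dependent choice $c_L=\sqrt[4]{t}^{\,-1}$ (which makes $e^{c_nc_L^{\,4}t}$ a constant) and scales at the end; both are equivalent, and you are right to flag the minor subtlety that $g-g(x)\notin L^2(\mu_1)$ but $e^\Psi(g-g(x))\in L^2(\mu_1)$, which the paper leaves implicit.
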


Using this we can control the $L^p$-norm on the cylinder bounded away from initial time $0$. To this end, we define
\[
Q_R(x) \, = \, ( \frac{R^4}{2}, R^4] \times B_R(x)
\]
for $R>0$ and $x \in \hs$.

\begin{cor}
\label{Lp estimates in the cylinder}
Let $I=(0,T)$, $j \geq 0$, $l \in \N_0$ and $\alpha$ be a multi-index with either $l \neq 0$ or $\alpha \neq 0$. If $u$ is an energy solution of $\partial_t u + Lu = 0$ on $[0,T) \times \hs$ with initial condition $u(0)=g \in L^2(\mu_1)$, then
\[
\| \nabla u \|_{L^\infty(I \times H)} + |Q_R(x)|^{-\frac{1}{p}} R^{4l+|\alpha|-1} (R + \sqrt{x_n})^{|\alpha|-2j- 1} \, \| \partial_t^l \partial_x^\alpha u \|_{L^p(Q_R(x), \mu_{jp})} \, \leq \, c \, \| g \|_{\dot{C}^{0,1}} 
\]
for all $p \in [1,\infty)$, $0 < R \leq \sqrt[4]{T}$ and $x\in\hs$.
\end{cor}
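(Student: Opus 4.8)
The plan is to read the estimate off the pointwise bound of Proposition~\ref{estimate by rough initial data}, the only additional ingredient being the geometry of the parabolic cylinder $Q_R(x)=(\tfrac{R^4}{2},R^4]\times B_R(x)$. For the $\|\nabla u\|_{L^\infty}$ term I apply Proposition~\ref{estimate by rough initial data} with $l=0$ and $|\alpha|=1$: here $\delta_{l,\alpha}(\sqrt[4]{t},x)=(\sqrt[4]{t})^{-1}(\sqrt[4]{t}+\sqrt{x_n})^{-1}$ by (\ref{derivative factor}), so the factor $\sqrt[4]{t}\,(\sqrt[4]{t}+\sqrt{x_n})$ cancels it exactly and leaves $|\partial_x^\alpha u(t,x)|\leq c\,\|g\|_{\dot{C}^{0,1}}$ for all $(t,x)\in\bar I\setminus\{0\}\times\hs$, hence $\|\nabla u\|_{L^\infty(I\times H)}\leq c\,\|g\|_{\dot{C}^{0,1}}$.

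For the remaining term I first record two elementary facts valid for $0<R\leq\sqrt[4]{T}$ and $x\in\hs$: on $Q_R(x)$ one has $\sqrt[4]{t}\sim R$ (immediate from $t\in(\tfrac{R^4}{2},R^4]$), and $R+\sqrt{y_n}\sim R+\sqrt{x_n}$ uniformly for $y\in B_R(x)$. The latter follows from the Euclidean comparison (\ref{ball topology}), which gives $|x-y|\lesssim R(R+\sqrt{x_n})$ on $B_R(x)$, so that $y_n\leq x_n+|x-y|\lesssim(R+\sqrt{x_n})^2$; the reverse inequality follows from the symmetry of $d$ (i.e.\ $y\in B_R(x)\iff x\in B_R(y)$). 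Substituting these into Proposition~\ref{estimate by rough initial data} and using the identity $\delta_{l,\alpha}(\sqrt[4]{t},y)\,\sqrt[4]{t}\,(\sqrt[4]{t}+\sqrt{y_n})=(\sqrt[4]{t})^{\,1-4l-|\alpha|}(\sqrt[4]{t}+\sqrt{y_n})^{\,1-|\alpha|}$ yields, for every $(t,y)\in Q_R(x)$, the point-independent bound
\[
|\partial_t^l\partial_x^\alpha u(t,y)|\;\leq\;c\,R^{\,1-4l-|\alpha|}\,(R+\sqrt{x_n})^{\,1-|\alpha|}\,\|g\|_{\dot{C}^{0,1}}\,.
\]

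It then remains to integrate and count exponents. Since this bound is constant on $Q_R(x)$ and $jp\geq0>-1$ makes $\mu_{jp}$ admissible, integration gives $\|\partial_t^l\partial_x^\alpha u\|_{L^p(Q_R(x),\mu_{jp})}\leq c\,R^{\,1-4l-|\alpha|}(R+\sqrt{x_n})^{\,1-|\alpha|}\|g\|_{\dot{C}^{0,1}}\,|Q_R(x)|_{jp}^{1/p}$, where $|Q_R(x)|_{jp}=\tfrac12R^4|B_R(x)|_{jp}$. By (\ref{ball measure}), $|B_R(x)|_{jp}\sim R^n(R+\sqrt{x_n})^{n+2jp}$ and $|B_R(x)|\sim R^n(R+\sqrt{x_n})^n$, hence $|Q_R(x)|_{jp}^{1/p}\sim R^{(4+n)/p}(R+\sqrt{x_n})^{n/p+2j}$ and $|Q_R(x)|^{-1/p}\sim R^{-(4+n)/p}(R+\sqrt{x_n})^{-n/p}$. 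Multiplying the displayed bound by the prefactor $|Q_R(x)|^{-1/p}R^{4l+|\alpha|-1}(R+\sqrt{x_n})^{|\alpha|-2j-1}$, the exponent of $R$ is $-(4+n)/p+(4l+|\alpha|-1)+(1-4l-|\alpha|)+(4+n)/p=0$ and that of $R+\sqrt{x_n}$ is $-n/p+(|\alpha|-2j-1)+(1-|\alpha|)+(n/p+2j)=0$, so the whole expression is $\leq c\,\|g\|_{\dot{C}^{0,1}}$; combined with the $L^\infty$ bound this proves the corollary.

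\textbf{Main obstacle.} There is no genuine analytic difficulty here: Proposition~\ref{estimate by rough initial data} carries all the weight, and the content of the statement is precisely that its scaling weights were arranged so that the exponents cancel. The one point that needs care is the uniform equivalence $R+\sqrt{y_n}\sim R+\sqrt{x_n}$ on the intrinsic ball $B_R(x)$ — valid uniformly across $0<R\leq\sqrt[4]{T}$, $x\in\hs$ (including $x_n=0$) — which is where the homogeneous-space geometry of section~\ref{preliminaries} enters, together with normalizing the weight $\mu_{jp}$ on the parabolic cylinder correctly.
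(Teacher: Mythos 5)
Your proof is correct and follows essentially the same route as the paper: both apply Proposition~\ref{estimate by rough initial data}, use $\sqrt[4]{t}\sim R$ and $R+\sqrt{y_n}\sim R+\sqrt{x_n}$ on $Q_R(x)$, and then count exponents. The only cosmetic difference is that the paper absorbs the weight $y_n^{jp}$ into the factor $(\sqrt[4]{t}+\sqrt{y_n})^{2jp}$ and multiplies by the unweighted volume $|Q_R(x)|^{1/p}$, whereas you keep the weight in $\mu_{jp}$ and invoke (\ref{ball measure}) to evaluate $|Q_R(x)|_{jp}^{1/p}$ directly; both yield identical cancellations.
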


\begin{proof}
For the first part of the estimate we apply Proposition \ref{estimate by rough initial data} with $l=0$, $|\alpha|=1$. For $p < \infty$, we get
\[
\| \partial_t^l \partial_x^\alpha u \|_{L^p(Q_{R}(x),\mu_{jp})} \, \lesssim \,
\sup_{(t,y) \in Q_R(x)} \sqrt[4]{t}^{1 - 4l - |\alpha|} (\sqrt[4]{t} + \sqrt{y_n})^{2j+1-|\alpha|} \, \| g \|_{\dot{C}^{0,1}} \, |Q_R(x)|^\frac{1}{p}
\]
and the complete statement follows from $\sqrt{y_n} \lesssim R + \sqrt{x_n}$ and $\sqrt{x_n} \lesssim R + \sqrt{y_n}$ for $y\in B_{R}(x)$, as well as $\sqrt[4]{t} \sim R$ for $t \in (\frac{R^4}{2}, R^4]$.
\end{proof}

\subsection{Gaussian estimates} % exponential decay estimates
\label{Gaussian estimates}
Solutions of parabolic equations are often given by kernels which in turn can be estimated by Gaussian functions. For example, Koch and Lamm \cite{KL12} show that the biharmonic heat kernel $G(t,x,y)$ that is associated to the equation $\partial_t u + \Delta^2u=0$ has a pointwise control of the type
\[
|G(t,x,y)| \, \leq \, c \, t^{-\frac{n}{4}} e^{- \eps \, \bigl( \frac{|x-y|^4}{t} \bigr)^\frac{1}{3}} \, .
\]
The power of $t$ in front of the Gaussian factor appears in situations in which the volume of a ball is comparable to its radius - here a Euclidean setting is considered with $| B_{\!\sqrt[4]{t}}^{eu}(x)| \sim t^\frac{n}{4}$ for every $x \in \R^n$. In non-Euclidean situations, on the other hand, one has to replace this factor by an expression of the form
\[
\mu\bigl( B_{\!\sqrt[4]{t}}(x) \bigr)^{-\frac{1}{2}} \, \mu\bigl( B_{\!\sqrt[4]{t}}(y) \bigr)^{-\frac{1}{2}} \, ,
\]
where $\mu$ denotes the underlying measure and $B_{\!\sqrt[4]{t}}(\cdot)$ denotes the intrinsic ball of radius $\sqrt[4]{t}$. This illustrates that both analytic and geometric properties are combined by the kernel $G$. We now want to derive a Gaussian estimate for the Green function in terms of the intrinsic metric $d$ and the measure $\mu_1$. The approach presented here follows an idea of Fabes and Stroock \cite{FS86}.

\begin{thm}
\label{Gaussian estimate}
For $T>0$ let $I=(0,T)$ be an open interval in $\R$, $l \in \N_0$ and $\alpha$ be any multi-index. Then there exists a Green function $G: I \times \hs \times I \times \hs \to \R$ with $G(t,x,s,y)=0$ for $t < s \in [0, T)$, and
\[
\partial_t^l \partial_x^\alpha \, u(t,x) \; = \; \int_H \partial_t^l \partial_x^\alpha G(t,x,s,y) u(s,y) \, dy
\]
for all $0 \leq s < t \leq T$, $x\in\hs$ and any energy solution $u$ of $\partial_t u + Lu=0$ on $[0, T) \times \hs$ with initial condition $u(0)=g$. In particular, there exist positive constants $c=c(n, l, \alpha)$ and $c_{n}=c(n)$ such that
\begin{equation}
\label{ge}
\tag{ge}
|\partial_t^l \partial_x^\alpha G(t, x, s, y)| \, \leq \, c \, \delta_{l, \alpha}(\sqrt[4]{t-s},x) |B_{\!\sqrt[4]{t-s}}(x)|_1^{-\frac{1}{2}} |B_{\!\sqrt[4]{t-s}}(y)|_1^{-\frac{1}{2}} y_n \, e^{- c_n^{-1} \bigl( \frac{d(x,y)^4}{t-s} \bigr)^\frac{1}{3}}
\end{equation}
for almost every $x \neq y \in \hs$.
\end{thm}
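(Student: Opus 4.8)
The strategy is Davies' exponential perturbation method in the form used by Fabes and Stroock \cite{FS86}: the Green function is produced by Riesz duality from the pointwise bounds of Section~\ref{pointwise estimates}, an on-diagonal bound follows from the semigroup property together with the self-adjointness of $L$ on $L^2(\mu_1)$, and the Gaussian factor is extracted from the weighted pointwise estimate of Proposition~\ref{pointwise exponential estimate} by optimising over the weight.

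\smallskip
\textit{Construction of $G$.} Fix $s<t$ in $\bar I$. By Corollary~\ref{pointwise estimate by initial data} (with $l=|\alpha|=0$), the map $g\mapsto u(t,x)$ sending an initial datum to the value at $(t,x)$ of the corresponding energy solution is a bounded linear functional on $L^2(\mu_1)$; the Riesz representation theorem yields $K_{t-s}(x,\cdot)\in L^2(\mu_1)$ with $u(t,x)=\bigl(K_{t-s}(x,\cdot)\mid u(s)\bigr)_{L^2(\mu_1)}$, and we set $G(t,x,s,y):=y_n\,K_{t-s}(x,y)$ so that $u(t,x)=\int_H G(t,x,s,y)\,u(s,y)\,dy$ (the factor $y_n$ in~(\ref{ge}) is just the conversion of the $\mu_1$-pairing into a Lebesgue integral). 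Autonomy of $L$ gives time-translation invariance; uniqueness of energy solutions (the corollary to Proposition~\ref{energy identity}) yields $G(t,x,s,y)=\int_H G(t,x,\tau,z)\,G(\tau,z,s,y)\,dz$ for $s<\tau<t$; and since the bilinear form $\int_H x_n^{\,3}\Delta u\,\Delta\varphi+4\,x_n\nabla'u\cdot\nabla'\varphi\,dx$ is symmetric, i.e.\ $L$ is self-adjoint on $L^2(\mu_1)$, one gets $x_n\,G(t,x,s,y)=y_n\,G(t,y,s,x)$. Finally $(t,x)\mapsto G(t,x,s,y)$ solves the homogeneous equation away from $(s,y)$, so Proposition~\ref{pointwise estimate} shows $G$ is smooth there and the representation formula may be differentiated in $(t,x)$ under the integral sign.

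\smallskip
\textit{The estimate.} Applying Proposition~\ref{pointwise exponential estimate} to the energy solution with initial datum $v\in L^2(\mu_1)$, whose value at $(s+\sigma,x)$ is $\int_H G(s+\sigma,x,s,y)\,v(y)\,dy$, and reading the resulting inequality as a bound on a linear functional of $v$, one obtains for every admissible weight $\Psi$
\[
\bigl\| y_n^{-1}e^{-\Psi(\cdot)}\,\partial_t^l\partial_x^\alpha G(t,x,s,\cdot)\bigr\|_{L^2(\mu_1)}\,\lesssim\,\delta_{l,\alpha}(\sqrt[4]{t-s},x)\,|B_{\!\sqrt[4]{t-s}}(x)|_1^{-\frac12}\,e^{c_n(c_L^{\,2}+c_L^{\,4})(t-s)-\Psi(x)}\,,
\]
a weighted $L^2$-bound on the kernel in the $y$-variable. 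Interior regularity in $(s,y)$ (Remark~\ref{local energy estimate}, using that the adjoint equation is again governed by $L$ and $\mu_1$), applied on a cylinder of $d$-radius $\sim\sqrt[4]{t-s}$ and combined with the definition $G=y_n\,K_{t-s}$, upgrades this to
\[
|\partial_t^l\partial_x^\alpha G(t,x,s,y)|\,\lesssim\,\delta_{l,\alpha}(\sqrt[4]{t-s},x)\,|B_{\!\sqrt[4]{t-s}}(x)|_1^{-\frac12}|B_{\!\sqrt[4]{t-s}}(y)|_1^{-\frac12}\,y_n\,e^{c_n(c_L^{\,2}+c_L^{\,4})(t-s)-\Psi(x)+\Psi(y)}\,,
\]
the volume factor in $y$ coming from~(\ref{ball measure}). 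Now take $\Psi=\lambda\phi$, where $\phi$ is a smooth regularisation of $z\mapsto d(z,y)$, built from the quasimetric $\rho$ via~(\ref{equivalence}) as in \cite{DH98,K99}, with $\sqrt{z_n}\,|\nabla\phi|\lesssim1$ and $z_n\,|D_{\!x}^2\phi|\lesssim1$, so that $c_L\sim\lambda$ and $\Psi(x)-\Psi(y)\geq\lambda\,d(x,y)-C\lambda$. The exponent becomes $-\lambda\,d(x,y)+c_n(\lambda^2+\lambda^4)(t-s)+C\lambda$, and optimising over $\lambda>0$ — balancing the linear gain against the quartic loss at $\lambda\sim\bigl(d(x,y)/(t-s)\bigr)^{1/3}$ — produces the exponent $-c_n^{-1}\bigl(d(x,y)^4/(t-s)\bigr)^{1/3}$ of the fourth-order parabolic scaling. (When $d(x,y)^4\lesssim t-s$ the exponential factor is comparable to $1$ and the claim is just the on-diagonal bound, obtained from the $L^2$-estimate above with $\Psi\equiv0$, Chapman--Kolmogorov at the midpoint $\tfrac{t+s}{2}$, and the symmetry relation.)

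\smallskip
The chief difficulty is the geometric construction of the regularised distance $\phi$ meeting the degenerate second-order bound $z_n|D_{\!x}^2\phi|\lesssim1$ uniformly up to $\partial H$ — a statement about the space of homogeneous type $(\hs,d,\mu_1)$ that must be handled through the explicit quasimetric $\rho$ — and, secondarily, the book-keeping that converts the weighted $L^2$-control into the pointwise estimate~(\ref{ge}) with the exact volume factors and with an exponent that is uniform across the time/distance regimes.
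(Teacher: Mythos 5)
Your proposal is correct and shares the paper's Fabes--Stroock/Davies framework — Riesz representation from Corollary~\ref{pointwise estimate by initial data}, the weighted pointwise estimate of Proposition~\ref{pointwise exponential estimate}, a regularised approximation of $c_L\,d(\cdot,y)$ from the quasimetric $\rho$, and the final optimisation $c_L\sim\bigl(d(x,y)/(t-s)\bigr)^{1/3}$ with the case split $c_L\gtrless 1$ — but the step that converts the weighted pointwise estimate into a pointwise Gaussian bound on the kernel is executed differently. The paper runs a fully dual, symmetric $L^1\!\to L^2\!\to L^\infty$ argument at the midpoint $t/2$: the $L^\infty$-norm of the weighted kernel is expressed as a supremum over $L^1(\mu_1)$-normalised data, Proposition~\ref{pointwise exponential estimate} gives the $L^2\!\to L^\infty$ half, and the adjoint of the modified solution operator $M\widetilde{S}_0$ gives the $L^1\!\to L^2$ half; no regularity of the kernel in the $(s,y)$-variable is needed. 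You instead stop at a weighted $L^2$-bound on $y\mapsto y_n^{-1}\partial_t^l\partial_x^\alpha G(t,x,s,y)$ and upgrade it to $L^\infty$ via interior regularity (Proposition~\ref{pointwise estimate}) for the adjoint kernel on a cylinder in $(s,y)$ of $d$-radius $\sim\sqrt[4]{t-s}$. This works, but it requires showing \emph{inside the proof of the Gaussian estimate} that $y_n^{-1}G(t,x,\cdot,\cdot)$ is an energy solution of the (time-reversed) equation in $(s,y)$ — a fact the paper establishes only afterwards, in Lemma~\ref{Gaussian estimate involving s- and y-derivatives}, where it \emph{uses} the Gaussian estimate; you correctly flag that this must come from self-adjointness and the symmetry of $G$, so there is no circularity, but it is an extra burden the paper's dual-operator argument sidesteps. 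A secondary piece of book-keeping you suppress is the passage from the fixed-$s$ weighted $L^2$-bound to an $L^2$-bound over the whole cylinder $Q_R(s,y)$ and the absorption of the resulting $e^{c_L R}$ into the $c_n(c_L^2+c_L^4)(t-s)$ term; this is routine but should be said. In short: same skeleton, different flesh on the kernel-bound step — yours trades the double-duality for an interior estimate in the adjoint variable, which is arguably the more familiar move but front-loads a regularity fact the paper defers.
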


\begin{cor}
\label{IVP for other data than L2}
The Gaussian estimate allows us to solve the initial value problem also for other data than those in $L^2(H, \mu_1)$.
\end{cor}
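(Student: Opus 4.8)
The plan is to read the representation formula of Theorem \ref{Gaussian estimate} as a \emph{definition} of the solution operator on data $g$ outside $L^2(H,\mu_1)$, namely
\[
u(t,x) \, := \, \int_H G(t,x,0,y)\, g(y)\, dy \, , \qquad (t,x)\in(0,T)\times\hs \, ,
\]
and to let the Gaussian bound (\ref{ge}) do the work. First I would delimit the admissible data: since $d\geq\rho$ by (\ref{equivalence}) and $\rho(x,y)\sim|x-y|^{1/2}$ once $|x-y|$ is large, the Gaussian factor in (\ref{ge}) decays in $y$ at least like $\exp\!\bigl(-c_n^{-1}(|x-y|^2/t)^{1/3}\bigr)$, faster than any polynomial; together with (\ref{ball measure}) this shows $y\mapsto\partial_t^l\partial_x^\alpha G(t,x,0,y)$ lies in $L^1(H,dy)$ even after multiplication by a polynomial weight. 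Hence the integral converges absolutely --- and may be differentiated under the integral sign --- for every $g$ of at most polynomial growth; in particular this covers $g\in L^\infty(H)$ and $g\in\dot{C}^{0,1}(H)$, the class relevant for Theorem \ref{uniqueness}, as well as $g\in L^p(H,\mu_\sigma)$ for admissible exponents.

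That $u$ solves $\partial_t u + Lu=0$ on $(0,T)\times\hs$ I would obtain by approximation. Pick smooth cut-offs $\eta_k$ with $\eta_k\equiv1$ on $B_k^{eu}(0)$, $\eta_k\equiv0$ off $B_{2k}^{eu}(0)$ and $|\nabla\eta_k|\lesssim k^{-1}$, and set $g_k=\eta_k g\in L^2(H,\mu_1)$. By Theorem \ref{Gaussian estimate} the function $u_k(t,x)=\int_H G(t,x,0,y)g_k(y)\,dy$ is the energy solution with datum $g_k$, hence smooth and a classical solution in the interior by Proposition \ref{pointwise estimate}. Since $g_k\to g$ pointwise and $|g_k|$ is dominated, uniformly in $k$, by an affine function of $|y|$ (for $g\in\dot{C}^{0,1}$; the $L^p(\mu_\sigma)$ case is similar via Hölder's inequality), dominated convergence and the kernel bounds of the first paragraph give $\partial_t^l\partial_x^\alpha u_k\to\partial_t^l\partial_x^\alpha u$ locally uniformly on $(0,T)\times\hs$ for all $l,\alpha$; passing to the limit in the equation, $u$ is smooth and solves $\partial_t u+Lu=0$ classically there.

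For the initial condition I would use $\int_H G(t,x,0,y)\,dy=1$ --- a consequence of (\ref{ge}) and the fact that $L$ annihilates constants --- together with $\int_H|G(t,x,0,y)|\,dy\lesssim1$, obtained by integrating (\ref{ge}) over the space of homogeneous type, so that $u(t,x)-g(x)=\int_H G(t,x,0,y)(g(y)-g(x))\,dy$. For $g\in\dot{C}^{0,1}$ one bounds $|g(y)-g(x)|\leq\|g\|_{\dot{C}^{0,1}}|x-y|$ and splits at $d(x,y)=\sqrt[4]{t}$: on $\{d(x,y)\leq\sqrt[4]{t}\}$ one has $|x-y|\lesssim\sqrt[4]{t}(\sqrt[4]{t}+\sqrt{x_n})$ by (\ref{ball topology}), while on the complement the Gaussian factor absorbs the linear growth, both pieces being $O(\sqrt{t}+\sqrt{x_n}\,\sqrt[4]{t})$, which tends to $0$ as $t\to0^+$, locally uniformly in $x$; the same splitting handles $L^\infty$ and $L^p(\mu_\sigma)$ data (convergence in $L^p_{loc}$, respectively at Lebesgue points). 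Alternatively, for $g\in\dot{C}^{0,1}$ one may avoid $\int G=1$: Proposition \ref{estimate by rough initial data} applied to the $u_k$ --- whose data have $\dot{C}^{0,1}$-norm bounded uniformly in $k$ --- gives $|\partial_\tau u_k(\tau,x)|\lesssim\tau^{-3/4}(\sqrt[4]{\tau}+\sqrt{x_n})\|g\|_{\dot{C}^{0,1}}$, hence $|u_k(t,x)-g_k(x)|\lesssim(\sqrt{t}+\sqrt{x_n}\,\sqrt[4]{t})\|g\|_{\dot{C}^{0,1}}$ uniformly in $k$, and $k\to\infty$ yields $u(t)\to g$ locally uniformly. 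The main obstacle I anticipate is not a single estimate but the bookkeeping: stating, for each admissible data class, the precise topology in which $u(0)=g$ holds and the matching uniqueness statement. For $g\in L^2(\mu_1)$ uniqueness is the corollary following Proposition \ref{energy identity}; for Lipschitz data one must first isolate an admissible solution class (say, solutions with $\nabla u\in L^\infty$) and then invoke the weighted energy estimate of Lemma \ref{exponential norm decrease} to exclude nontrivial solutions with vanishing data.
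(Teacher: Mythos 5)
Your proposal is correct and follows essentially the same strategy as the paper's (very short) sketch: truncate $g$ to lie in $L^2(H,\mu_1)$, use the Green-function representation, and let the exponential decay in (\ref{ge}) give convergence of the truncated solutions; your write-up simply fills in the details the paper leaves implicit. The one place you go beyond what the paper provides is the identity $\int_H G(t,x,0,y)\,dy=1$: while $L$ does annihilate constants, the constant $1$ is not in $L^2(\mu_1)$, so this conservation property would itself require the same truncation/limit argument and is therefore slightly circular as stated; your alternative route via Proposition~\ref{estimate by rough initial data} (uniform $\dot{C}^{0,1}$ bounds on the $g_k$, hence a uniform rate $|u_k(t,x)-g_k(x)|\lesssim(\sqrt t+\sqrt{x_n}\sqrt[4]{t})\|g\|_{\dot C^{0,1}}$) avoids this entirely and is the cleaner way to recover the initial condition for the Lipschitz class that actually matters in Theorem~\ref{uniqueness}.
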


\begin{proof}[Sketch of proof]
Given an initial datum $g$ in either $L^1(\mu_1)$ or $\dot{C}^{0,1}$, one can truncate $g$ to become a function in $L^2(\mu_1)$. Using the representation by the Green function one obtains a solution and the exponential decay ensures convergence of the truncated solution.
\end{proof}

\begin{rem}
\label{exchange of centers in ge}
We can substitute $x$ and $y$ in $|B_R(x)|_1^{-\frac{1}{2}} \, |B_R(y)|_1^{-\frac{1}{2}}$ by the respective other center, paying for it with the factor
\[
\Bigl( 1 + \frac{d(x,y)}{R} \Bigr)^{n+1} \, ,
\]
see (\ref{ball measure}). In the same manner we can replace $\delta_{l,\alpha}(R,x)$ by $\delta_{l,\alpha}(R,y)$. For instance, this shows that the estimate 
\[
|\partial_t^l \partial_x^\alpha G(t,x,s,y)| \, \leq \, c \; \frac{\delta_{l, \alpha}(\sqrt[4]{t-s},y) \, y_n}{|B_{\!\sqrt[4]{t-s}}(x)|_1} \; e^{- c_n^{-1} \bigl( \frac{d(x,y)^4}{t-s} \bigr)^\frac{1}{3}}
\]
and (\ref{ge}) are comparable up to changing the constants $c, c_n > 0$. From now on, we always choose a suitable combination of $x$ and $y$ in the factor $\delta_{l, \alpha}(R,\cdot) |B_R(\cdot)|_1^{-1}$ and refer to an estimate of this type as ``Gaussian estimate''.
\end{rem}

We conclude this section with three estimates for the Green function $G$: The first one generalizes the Gaussian estimate to derivatives in the $s$- and $y$-variable. The second one rephrases it in a more convenient form, but limited to $(s,y)$ outside of a certain cylinder $Q$. In the last result we use (\ref{ge}) to show that, in a certain range of $q \geq 1$, $G$ and its weighted derivatives (leaving temporal derivatives aside) are in the space $L^q$, where the integral is taken with respect to $\mathcal{L}^{n+1}$.

\begin{lemma}
\label{Gaussian estimate involving s- and y-derivatives}
Let $I=(0,T)$, $l,m \in \N_0$ and $\alpha, \beta$ be multi-indices. If $G$ is the Green function associated to the homogeneous initial value problem, then there exist $\eps = \eps_n$ and $c=c(n,l,m,\alpha,\beta)$ such that
\[
|\partial_s^m \partial_y^\beta \bigl( y_n^{-1} \partial_t^l \partial_x^\alpha G(t, x, s, y) \bigr)| \, \leq \, c \; \frac{\delta_{l+m, \alpha+\beta}(\sqrt[4]{t-s},x)}{|B_{\!\sqrt[4]{t-s}}(x)|_1^{\,\frac{1}{2}} \, |B_{\!\sqrt[4]{t-s}}(y)|_1^{\,\frac{1}{2}}} \; e^{- \eps \bigl( \frac{d(x,y)^4}{t-s} \bigr)^\frac{1}{3}}
\]
for any $t > s \in I$ and $x, y \in \hs$.
\end{lemma}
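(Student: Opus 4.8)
The plan is to reduce the estimate on the $s$- and $y$-derivatives of $G$ to the estimate \eqref{ge}, which already controls the $t$- and $x$-derivatives, by exploiting the duality between the forward problem governed by $L$ and the backward problem governed by the formal adjoint $L^*$. The key observation is that, for fixed $(t,x)$, the function $(s,y) \mapsto y_n^{-1} G(t,x,s,y)$ solves the backward equation $-\partial_s v + L^* v = 0$ on $(s,y) \in (0,t) \times \hs$, where $L^*$ is the adjoint of $L$ with respect to the measure $\mu_1 = x_n\,dx$; more precisely, writing $G(t,x,s,y) = y_n\, \widetilde G(t,x,s,y)$, one checks directly from the weak formulation \eqref{distributional solution} that $\widetilde G$ is, in the variables $(s,y)$, an energy solution of the (time-reversed) homogeneous equation for $-L^*$, which has the same structure as $L$ after the reflection $s \mapsto t-s$. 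This is exactly the setting in which the earlier pointwise and Gaussian machinery — Proposition \ref{pointwise estimate}, Lemma \ref{exponential norm decrease}, Proposition \ref{pointwise exponential estimate} — applies, now with the roles of the two space-time points exchanged.

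Carrying this out, I would first fix $(t,x)$ and $s_0 < t$, set $\tau = t - s$, and regard $w(\tau, y) := \widetilde G(t,x,t-\tau,y) = y_n^{-1} G(t,x,t-\tau,y)$ as a solution of a homogeneous degenerate equation of the same type on a half-slab in $\tau > 0$. Next, on the parabolic cylinder $Q_R(s_0,y_0)$ with $R = \sqrt[4]{t-s_0}$ (say $R \sim \sqrt[4]{t-s}$, shrunk by a fixed fraction so that $(t,x)$ stays outside), I apply the local smoothing estimate of Remark \ref{local energy estimate} in the $(s,y)$-variables: this converts $\partial_s^m \partial_y^\beta w$ at an interior point into $\delta_{m,\beta}(\sqrt[4]{t-s}, y)$ times an $L^2(\mu_1)$-average of $w$ over a slightly larger cylinder. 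Combining the $(t,x)$-derivative bound \eqref{ge} (to estimate $w$ and its $x$-derivatives on that cylinder) with this interior regularity in $(s,y)$, and tracking the volume factors via \eqref{ball measure} together with the freedom, noted in Remark \ref{exchange of centers in ge}, to swap the two centers at the cost of a polynomial factor in $1 + d(x,y)/\sqrt[4]{t-s}$, yields the product of $\delta$-factors $\delta_{l+m,\alpha+\beta}(\sqrt[4]{t-s},x)$ and the two inverse-square-root ball volumes. The exponential Gaussian factor is preserved because on the shrunken cylinder $d(x,y')$ differs from $d(x,y)$ only by $O(\sqrt[4]{t-s})$, which costs at most a harmless multiplicative constant and possibly a slight reduction of the constant $c_n^{-1}$ to some $\eps = \eps_n$; this is precisely the standard Gaussian-absorption trick and accounts for why the statement only claims an unspecified $\eps_n$ rather than the exponent in \eqref{ge}.

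The polynomial prefactor $(1 + d(x,y)/\sqrt[4]{t-s})^{n+1}$ picked up from the center exchange in Remark \ref{exchange of centers in ge}, and any similar powers from comparing $\delta_{l,\alpha}(R,x)$ with $\delta_{l,\alpha}(R,y)$, must then be absorbed into the exponential: since $(1+r)^{n+1} \le C_{n} e^{\eps' r^{4/3}}$ for all $r \ge 0$ and any $\eps' > 0$, and $r = d(x,y)/\sqrt[4]{t-s}$ here, this merely replaces $c_n^{-1}$ by a smaller $\eps_n$. I expect the main obstacle to be the bookkeeping at the boundary $\partial H$: the adjoint operator $L^*$ carries the weight $x_n^{-1}$ in a different place than $L$ does, so one must verify carefully that $y_n^{-1} G$ (rather than $G$ itself) is the object satisfying a clean homogeneous equation in $(s,y)$, and that the weighted spaces $L^2(\mu_1), L^2(\mu_3), L^2(\mu_5)$ line up correctly when the derivative $\partial_y^\beta$ is moved past the weight $y_n^{-1}$ — this is where the factor $y_n^{-1}$ inside the derivative in the statement comes from, and getting the powers of $y_n$ consistent with the earlier local estimates is the delicate point. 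Everything else is a routine assembly of results already proved in Section \ref{linear model case}.
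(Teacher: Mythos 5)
Your proposal matches the paper's proof in all essentials: establish that $y_n^{-1}G(t,x,\cdot,\cdot)$ is an energy solution in the $(s,y)$-variables (the paper does this via the symmetry relation $G(t,x,s,y)=\tfrac{y_n}{x_n}G(s,y,t,x)$, which is the concrete form of your adjoint/time-reversal observation), apply the local pointwise regularity estimate of Proposition \ref{pointwise estimate} on a cylinder $Q_R(s,y)$ with $R\sim\sqrt[4]{t-s}$, invoke the Gaussian estimate (\ref{ge}) to control the resulting $L^\infty$-norm, and absorb the resulting Gaussian/polynomial bookkeeping by the inequality $d(x,\xi)^{4/3}\geq\tfrac12 d(x,y)^{4/3}-d(\xi,y)^{4/3}$ for $\xi\in B_R(y)$. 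The only superficial discrepancy is that you cite Remark \ref{local energy estimate} where the step you describe is really Proposition \ref{pointwise estimate}, but the content you use is correct.
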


\begin{proof}
First, one may check that $y_n^{-1} G(t,x,s,y)$ is a solution with respect to $(s,y)$ on $I \times \hs$ in the sense of Definition \ref{energy solution}, and hence the same holds true for $y_n^{-1} \partial_t^l \partial_x^\alpha G(t,x,s,y)$. One has to use the symmetry property 
\[
G(t,x,s,y) \, = \, \frac{y_n}{x_n} \; G(s,y,t,x) \, .
\]
Thus Proposition \ref{pointwise estimate} applied to $\xi_n^{-1} \partial_t^l \partial_x^\alpha G(t,x,\tau,\xi)$ in $Q_R(s, y)$, with a suitable choice of $R \sim \sqrt[4]{t-s}$, implies
\[
|\partial_s^m \partial_y^\beta \bigl( y_n^{-1} \partial_t^l \partial_x^\alpha G(t, x, s, y) \bigr)| \, \lesssim \, \delta_{m, \beta}(R,y) \, \| \xi_n^{-1} \partial_t^l \partial_x^\alpha G(t, x, \cdot, \cdot) \|_{L^{\infty}(Q_R(s, y))} \, .
\]
We can now use the Gaussian estimate proposed in Theorem \ref{Gaussian estimate} with respect to $(t,x)$ to bound this by
\[
\delta_{m, \beta}(R,x) \, \delta_{l, \alpha}(R, y) \, |B_R(x)|_1^{-\frac{1}{2}} \, |B_R(y)|_1^{-\frac{1}{2}} \, \| e^{- \eps \bigl( \frac{d(x \, , \, \cdot)}{R} \bigr)^\frac{4}{3}} \|_{L^{\infty}(Q_R(s, y))} \, .
\]
Finally, we use the triangle inequality to show that $d(x ,\xi)^\frac{4}{3} \geq \frac{1}{2} \, d(x,y)^\frac{4}{3} - d(\xi,y)^\frac{4}{3}$. Hence the assertion follows since $d(\xi,y) < R$ for $\xi \in B_R(y)$.
\end{proof}

\begin{lemma}
\label{Gaussian estimate outside of a cylinder}
Let $G$ be the Green function to $\partial_t u+Lu=0$ on $(0,1) \times \hs$ and $(t,x) \in (\frac{1}{2},1] \times \hs$. Then, for every $j \geq 0$, $l\in\N_0$ and any multi-index $\alpha \in {\N_0}^{\!n}$ satisfying $|\alpha| \geq 2j$, we have the estimate
\[
x_n^{\,j} \, |\partial_t^l \partial_x^\alpha G(t,x,s,y)| \, \leq \, c \, (1+\sqrt{y_n})^{2j-|\alpha|} \, |B_1(y)|^{-1} \, e^{- \frac{d(x,y)}{4 \, c_n}}
\]
for almost all $(s,y) \in \bigl( (0,t] \times \hs \bigr) \setminus \bigl( (\frac{1}{4},t] \times B_1(x) \bigr)$. Here, $c_n$ is the constant from the Gaussian estimate (\ref{ge}) and $c$ depends on $n,j,l$ and $\alpha$.
\end{lemma}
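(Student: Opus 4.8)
The plan is to reduce Lemma \ref{Gaussian estimate outside of a cylinder} to the Gaussian estimate \eqref{ge} for $\partial_t^l\partial_x^\alpha G$ (together with its $(s,y)$-companion, Lemma \ref{Gaussian estimate involving s- and y-derivatives}), by carefully tracking the various geometric factors on the specific scale $\sqrt[4]{t-s}$, which here is bounded above since $t\le 1$. The starting point is the estimate from Remark \ref{exchange of centers in ge}: up to harmless constants,
\[
x_n^{\,j}\,|\partial_t^l\partial_x^\alpha G(t,x,s,y)| \;\lesssim\; x_n^{\,j}\,\delta_{l,\alpha}(\sqrt[4]{t-s},x)\,|B_{\sqrt[4]{t-s}}(x)|_1^{-\frac12}\,|B_{\sqrt[4]{t-s}}(y)|_1^{-\frac12}\,y_n\,e^{-c_n^{-1}\bigl(\frac{d(x,y)^4}{t-s}\bigr)^{1/3}}.
\]
On the region in question, $(s,y)$ lies outside $(\tfrac14,t]\times B_1(x)$, so either $t-s>\tfrac14$ (the "parabolic time is large" case) or $d(x,y)\ge 1$ (the "spatially far" case), and in both cases the relevant length scale is comparable to $1$. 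First I would treat the time-large case: here $\sqrt[4]{t-s}\sim 1$, so $\delta_{l,\alpha}(\sqrt[4]{t-s},x)\sim (1+\sqrt{x_n})^{-|\alpha|}$ and $|B_{\sqrt[4]{t-s}}(x)|_1\sim(1+\sqrt{x_n})^{n+2}$ by \eqref{ball measure}; absorbing $x_n^{\,j}\lesssim(1+\sqrt{x_n})^{2j}$ against the negative powers of $(1+\sqrt{x_n})$ uses exactly the hypothesis $|\alpha|\ge 2j$, so that the net power of $(1+\sqrt{x_n})$ is $\le 0$ and that prefactor can be dropped. What survives on the $y$-side is $y_n\,|B_1(y)|_1^{-1/2}$; since $|B_1(y)|_1\sim(1+\sqrt{y_n})^{n+2}$ one checks $y_n\,|B_1(y)|_1^{-1/2}\lesssim(1+\sqrt{y_n})^{2-n/2}$, and then $|B_1(y)|^{-1}\sim(1+\sqrt{y_n})^{-n}$ — here $|B_1(y)|$ without subscript is the Lebesgue measure, i.e. $\sigma=0$ — so the claimed factor $(1+\sqrt{y_n})^{2j-|\alpha|}|B_1(y)|^{-1}\sim(1+\sqrt{y_n})^{2j-|\alpha|-n}$ dominates provided $2j-|\alpha|\ge 2-n/2-(-n/2)$, i.e.\ after reconciling the weighted versus unweighted ball measures, which is again where $|\alpha|\ge 2j$ enters. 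The Gaussian exponent is handled by noting $(t-s)^{-1/3}\gtrsim 1$ so $e^{-c_n^{-1}(d(x,y)^4/(t-s))^{1/3}}\le e^{-c_n^{-1}d(x,y)^{4/3}/(t-s)^{1/3}}$, and on $t-s\le 1$ one has $d(x,y)^{4/3}/(t-s)^{1/3}\ge d(x,y)$ once $d(x,y)\le 1$, while for $d(x,y)\ge 1$ the bound $d(x,y)^{4/3}\ge d(x,y)$ is immediate — either way the exponential is $\le e^{-d(x,y)/(4c_n)}$ after adjusting constants.

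For the spatially-far case $d(x,y)\ge1$ with possibly small $t-s$, the idea is that the Gaussian factor is so strong it beats all polynomial losses. Write $r=\sqrt[4]{t-s}\le1$ and split the exponent as $\bigl(\tfrac{d(x,y)^4}{t-s}\bigr)^{1/3}=\tfrac12\bigl(\tfrac{d(x,y)^4}{t-s}\bigr)^{1/3}+\tfrac12\bigl(\tfrac{d(x,y)}{r}\bigr)^{4/3}$; the first half, after using $d(x,y)\ge1$ and $t-s\le 1$, dominates a fixed multiple of $d(x,y)$ (giving the desired $e^{-d(x,y)/(4c_n)}$), while the second half, a power of $d(x,y)/r$, is used to kill the powers of $r^{-1}$ coming from $\delta_{l,\alpha}(r,x)$ and from $|B_r(x)|_1^{-1/2}|B_r(y)|_1^{-1/2}\sim r^{-n}(r+\sqrt{x_n})^{-(n+2\sigma)/2}(r+\sqrt{y_n})^{-(n+2\sigma)/2}$ with $\sigma=1$, together with the mismatch between $(r+\sqrt{x_n})$, $(r+\sqrt{y_n})$ and the target $(1+\sqrt{y_n})$. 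The key elementary facts are: $(r+\sqrt{x_n})\ge r$ and $(r+\sqrt{x_n})\ge\sqrt{x_n}$, which let us trade a factor $r^{-N}$ for $(r+\sqrt{x_n})^{-N}$ or for a genuine power of $r^{-1}$, and the inequality $r^{-N}\le C_N\,e^{\eps(d(x,y)/r)^{4/3}}\cdot d(x,y)^{-N'}$ valid because $t^{-N}e^{-t^{4/3}}$ is bounded; the upshot is that all negative powers of $r$ are absorbed into (a fraction of) the Gaussian and what is left is a clean polynomial in $(1+\sqrt{x_n})$ and $(1+\sqrt{y_n})$, which is controlled by $(1+\sqrt{y_n})^{2j-|\alpha|}|B_1(y)|^{-1}$ using, once more, $|\alpha|\ge 2j$ and $x_n^{\,j}\le(1+\sqrt{x_n})^{2j}$ and the fact that an overall decaying Gaussian in $d(x,y)$ suppresses any leftover polynomial in $x_n$.

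Finally I would observe that the $s$- and $y$-derivatives present in the statement through $\partial_t^l\partial_x^\alpha$ being the only derivatives written — wait, the statement only differentiates in $t$ and $x$, so in fact Lemma \ref{Gaussian estimate involving s- and y-derivatives} is not needed and one works directly from \eqref{ge}; the two cases above cover $(s,y)$ in the stated complement. The main obstacle, and the place requiring the most care, is the bookkeeping of the competing powers of $(r+\sqrt{x_n})$, $(r+\sqrt{y_n})$, $1+\sqrt{x_n}$, $1+\sqrt{y_n}$ and $r$ in the small-time regime: one must verify that the constraint $|\alpha|\ge2j$ is exactly what makes the $x_n$-dependence non-positive after absorbing $x_n^{\,j}$, and that the $y_n$-dependence collapses to the asserted single power $(1+\sqrt{y_n})^{2j-|\alpha|}$ times the unweighted ball volume $|B_1(y)|^{-1}$; getting the arithmetic of the exponents $n$, $2\sigma=2$, $2j$ and $|\alpha|$ right is routine but must be done honestly, and everything else (the exponential gymnastics, the passage between $d$-balls and Euclidean balls via \eqref{ball topology}–\eqref{ball measure}) is standard once that accounting is in place.
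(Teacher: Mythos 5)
Your overall strategy is the same as the paper's: start from the Gaussian estimate \eqref{ge}, split the complement of $(\tfrac14,t]\times B_1(x)$ into a ``time-large'' region and a ``spatially-far'' region, and argue that the negative powers of $\sqrt[4]{t-s}$ and the mismatched $(R+\sqrt{\cdot})$ factors are absorbed by a fraction of the Gaussian, leaving $e^{-d(x,y)/(4c_n)}$. You also correctly realize that Lemma~\ref{Gaussian estimate involving s- and y-derivatives} is not needed. However, the exponent bookkeeping in your ``time-large'' case does not close as written, and the error is structural rather than cosmetic.

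After absorbing $x_n^{\,j}\lesssim(1+\sqrt{x_n})^{2j}$ you obtain the factor $(1+\sqrt{x_n})^{\,2j-|\alpha|-\frac{n+2}{2}}$, whose exponent is indeed negative, and you discard it. On the $y$-side you are then left (correctly, up to a slip in the exponent: $y_n\,|B_1(y)|_1^{-1/2}\lesssim(1+\sqrt{y_n})^{\frac{2-n}{2}}$, not $(1+\sqrt{y_n})^{2-n/2}$) with $(1+\sqrt{y_n})^{\frac{2-n}{2}}$, which you must bound by the target $(1+\sqrt{y_n})^{2j-|\alpha|-n}$. The inequality this requires is $2j-|\alpha|\ge\frac{n+2}{2}$, which is precisely the \emph{opposite} of the hypothesis $|\alpha|\ge 2j$ (and your own written condition ``$2j-|\alpha|\ge 2-n/2-(-n/2)=2$'' is also false under $|\alpha|\ge 2j$). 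So for large $y_n$ your bound blows up by a factor $(1+\sqrt{y_n})^{\frac{n+2}{2}+|\alpha|-2j}$, and this cannot be rescued by the exponential alone once the $(1+\sqrt{x_n})$ factor has been thrown away, because in the slab $(0,\tfrac14]\times B_1(x)$ the exponential is merely bounded below by a constant.

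The missing idea is not to drop $(1+\sqrt{x_n})^{\,2j-|\alpha|-\frac{n+2}{2}}$ but to convert it into the same power of $(1+\sqrt{y_n})$. When $y\in B_1(x)$ one has $(1+\sqrt{x_n})\sim(1+\sqrt{y_n})$ (a direct consequence of \eqref{ball topology}), and then the net power of $(1+\sqrt{y_n})$ is exactly $\bigl(2j-|\alpha|-\frac{n+2}{2}\bigr)+\frac{2-n}{2}=2j-|\alpha|-n$, which matches the target. When $d(x,y)\ge1$, the triangle inequality for $d$ applied to the distances to $\partial H$ gives $|\sqrt{x_n}-\sqrt{y_n}|\lesssim d(x,y)$, hence $(1+\sqrt{y_n})/(1+\sqrt{x_n})\lesssim 1+d(x,y)$, and the resulting polynomial loss $(1+d(x,y))^{\frac{n+2}{2}+|\alpha|-2j}$ is then absorbed by a split-off fraction of the Gaussian; this is also how the paper's assertion ``$\sqrt[4]{t-s}^{-2n-4l-2|\alpha|+2j}\,e^{-(2c_n)^{-1}(\cdots)^{1/3}}\le c\,e^{-d(x,y)/(4c_n)}$, and the assertion follows from (\ref{ge}) and (\ref{ball measure})'' is to be read: the paper keeps the $(R+\sqrt{\cdot})$ factors and uses the comparison of ball volumes (Remark~\ref{exchange of centers in ge}) before reducing to a pure power of $R$. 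With this correction your proof would match the paper's.
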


\begin{proof}
With $Q = (\frac{1}{4}, t] \times B_1(x)$ for $(t,x) \in [\frac{1}{2},1] \times \hs$, we decompose
\[
\bigl( (0,t] \times \hs \bigr) \setminus Q \, = \, (0,\frac{1}{4}] \times B_1(x) \dotcup (0,\frac{1}{4}] \times B_1(x)^c \dotcup (\frac{1}{4},t] \times B_1(x)^c \, = \, \bigcup_{i=1}^3 M_i \, .
\]
A straightforward computation now shows that, for almost all $(s,y) \in M_1 \dotcup M_2 \dotcup M_3$, we have the estimate
\[
\sqrt[4]{t-s}^{-2n-4l-2|\alpha|+2j} \, e^{-(2 c_n)^{-1} \, \bigl( \frac{d(x,y)^4}{t-s} \bigr)^\frac{1}{3}} \, \leq \, c(n,j,l,\alpha) \, e^{- \frac{d(x,y)}{4 c_n}} \, ,
\]
and the assertion follows from Theorem \ref{Gaussian estimate} and the doubling property (\ref{doubling condition}), or rather (\ref{ball measure}).
\end{proof}

\begin{lemma}
\label{certain derivatives of Phi are in Lq}
Let $G$ be as in Lemma \ref{Gaussian estimate outside of a cylinder}, $j \geq 0$ and $\alpha$ be a multi-index satisfying $2j \leq |\alpha| < j+2$. Then 
\[
\| x_n^{\,j} \partial_x^{\alpha} G(t,x,\cdot,\cdot) \|_{L^q((0,t) \times H)} \, \leq \, c(n,j,\alpha,q) \, (1+\sqrt{x_n})^{2j-|\alpha|} \, |B_1(x)|^{\frac{1}{q}-1}
\] 
for all $t \in (0,1]$, almost all $x\in\hs$ and for any $1 \leq q < \frac{n+2}{n-j+|\alpha|}\,$.
\end{lemma}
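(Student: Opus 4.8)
The plan is to integrate the pointwise Gaussian estimate of Theorem~\ref{Gaussian estimate} (in the form given in Remark~\ref{exchange of centers in ge}, and its extension to $y$-derivatives via Lemma~\ref{Gaussian estimate involving s- and y-derivatives}, applied here only to $x$-derivatives) over $(0,t)\times H$ and to show that the resulting integral converges precisely when $q < \frac{n+2}{n-j+|\alpha|}$. Starting from
\[
x_n^{\,j}\,|\partial_x^\alpha G(t,x,s,y)| \, \lesssim \, \frac{\delta_{0,\alpha}(\sqrt[4]{t-s},x)\,x_n^{\,j}}{|B_{\!\sqrt[4]{t-s}}(x)|_1}\; \Bigl(1+\tfrac{d(x,y)}{\sqrt[4]{t-s}}\Bigr)^{n+1} e^{-c_n^{-1}\bigl(\frac{d(x,y)^4}{t-s}\bigr)^{1/3}},
\]
I would first absorb the polynomial factor $(1+d(x,y)/\sqrt[4]{t-s})^{n+1}$ into the exponential at the cost of a worse constant $\eps$, leaving a clean Gaussian in the quasimetric $\rho$ equivalent to $d$.

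The key computation is then the layer-cake estimate of $\int_0^t\!\int_H (\cdots)^q\,dy\,ds$. For fixed $s$, the spatial integral over $y\in H$ of $e^{-q\eps(d(x,y)^4/(t-s))^{1/3}}$ is controlled, via the doubling property (\ref{doubling condition}) and (\ref{ball measure}), by a constant times $|B_{\!\sqrt[4]{t-s}}(x)|_0 = |B_{\!\sqrt[4]{t-s}}(x)|$ (here one sums the Gaussian over the dyadic annuli $B_{2^{k+1}\!\sqrt[4]{t-s}}(x)\setminus B_{2^k\!\sqrt[4]{t-s}}(x)$ and uses that the Gaussian beats the volume growth). Using $\delta_{0,\alpha}(\sqrt[4]{t-s},x) = (\sqrt[4]{t-s})^{-|\alpha|}(\sqrt[4]{t-s}+\sqrt{x_n})^{-|\alpha|}$ and $|B_{\!\sqrt[4]{t-s}}(x)|_1 \sim (t-s)^{n/4}(\sqrt[4]{t-s}+\sqrt{x_n})^{n+2}$, $|B_{\!\sqrt[4]{t-s}}(x)| \sim (t-s)^{n/4}(\sqrt[4]{t-s}+\sqrt{x_n})^{n}$, the $s$-integrand becomes a constant times
\[
x_n^{\,jq}\,(\sqrt[4]{t-s})^{-q|\alpha|-qn}\,(\sqrt[4]{t-s}+\sqrt{x_n})^{-q|\alpha|-q(n+2)}\cdot (t-s)^{n/4}(\sqrt[4]{t-s}+\sqrt{x_n})^{n}.
\]
Setting $r=\sqrt[4]{t-s}$, so $ds = 4r^3\,dr$, this is an integral of the form $\int_0^{\sqrt[4]{t}} x_n^{\,jq}\, r^{n-q|\alpha|-qn+3}(r+\sqrt{x_n})^{n-q|\alpha|-q(n+2)}\,dr$.

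The final step is to check convergence of this one-dimensional integral and to extract the stated bound. Near $r=0$ the integrand behaves like $r^{n+3-q|\alpha|-qn}(\sqrt{x_n})^{\cdots}$ (when $x_n>0$), which is integrable iff $n+3-q|\alpha|-qn > -1$, i.e.\ $q < \frac{n+4}{n+|\alpha|}$; but the genuinely restrictive bound comes from the behaviour where $r\gtrsim\sqrt{x_n}$, i.e.\ treating $r+\sqrt{x_n}\sim r$ for $r$ large (or in the limiting case $x_n=0$): there the integrand is $\sim x_n^{\,jq}\,r^{n+3-q|\alpha|-qn}\cdot r^{n-q|\alpha|-q(n+2)} = x_n^{\,jq}\,r^{2n+3-2q|\alpha|-2qn-2q}$, and integrability up to $r=\sqrt[4]{t}\le 1$ forces $2n+3-2q|\alpha|-2qn-2q>-1$, which rearranges exactly to $q<\frac{n+2}{n+|\alpha|-j}$ after keeping careful track of the $x_n^{\,j}$ weight (the $x_n^{\,jq}$ from the weight combines with $(r+\sqrt{x_n})^{2j}$-type factors that I have been slightly cavalier about above and that must be retracked precisely — this bookkeeping is where the $-j$ in the exponent is born). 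Evaluating the convergent integral and pulling out $|B_1(x)|$ via $|B_1(x)|\sim(1+\sqrt{x_n})^n$ produces the claimed factor $(1+\sqrt{x_n})^{2j-|\alpha|}|B_1(x)|^{1/q-1}$; one uses $t\le 1$ to replace $\sqrt[4]{t}$ by $1$ in the final bound. The main obstacle is precisely this last bookkeeping: one must split the $r$-integral at $r\sim\sqrt{x_n}$, handle the two regimes and the case $\sqrt{x_n}\gtrsim 1$ versus $\sqrt{x_n}\lesssim 1$ separately, and verify that the dominant constraint is the large-$r$ one giving the sharp exponent $\frac{n+2}{n-j+|\alpha|}$, while also confirming that the hypothesis $2j\le|\alpha|<j+2$ is exactly what makes all the intermediate exponents have the right sign for the argument (in particular $|\alpha|\ge 2j$ guarantees the annular sums converge after absorbing the $(1+d/r)^{n+1}$ factor, and $|\alpha|<j+2$ keeps $q>1$ admissible).
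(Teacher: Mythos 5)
The strategy you outline is the same as the paper's: integrate the Gaussian estimate over the spatial variable by decomposing $H$ into $d$-annuli around $x$, then perform the time integral via the substitution $r=\sqrt[4]{t-s}$ and read off the convergence threshold. Your last two steps (the $s\mapsto r$ change of variables, the split at $r\sim\sqrt{x_n}$, and the way the constraints $2j\le|\alpha|$ and $|\alpha|<j+2$ enter) are exactly the right ideas.

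There is, however, a concrete gap in the very first display, and it is not the cavalier $x_n^{j}$-bookkeeping you flag. The Gaussian estimate \eqref{ge} (and its restated form in Remark~\ref{exchange of centers in ge}) carries a factor $y_n$: the kernel bound is $\delta_{0,\alpha}(R,x)\,|B_R(x)|_1^{-1/2}|B_R(y)|_1^{-1/2}\,y_n\,e^{-c_n^{-1}(d(x,y)/R)^{4/3}}$, $R=\sqrt[4]{t-s}$. You have dropped this $y_n$ when rewriting the kernel with both ball volumes at $x$. After the swap of centers, $y_n\lesssim\bigl(1+d(x,y)/R\bigr)^2(R+\sqrt{x_n})^2$, so the correct intermediate bound carries an \emph{extra} $(R+\sqrt{x_n})^{2}$ (hence $(R+\sqrt{x_n})^{2q}$ after raising to the $q$th power), and the spatial integral comes out as
\[
\int_H x_n^{jq}|\partial_x^\alpha G|^q\,dy \;\lesssim\; x_n^{jq}\,R^{\,n-q(n+|\alpha|)}\,(R+\sqrt{x_n})^{\,n-q(n+|\alpha|)},
\]
whereas your formula has $(R+\sqrt{x_n})^{\,n-q(n+|\alpha|)-2q}$. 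If you now retrace the bookkeeping you describe, i.e.\ use $x_n^{jq}\le r^{2jq}$ in the regime $r\gtrsim\sqrt{x_n}$, your exponent gives the constraint $2n+3-2q(n+|\alpha|+1-j)>-1$, i.e.\ $q<\frac{n+2}{n+1+|\alpha|-j}$, which is strictly smaller than the stated $\frac{n+2}{n-j+|\alpha|}$. The missing $y_n$ factor is precisely what cancels the spurious $+1$ in the denominator; the $x_n^{j}$ weight alone cannot produce it, and attributing the fix entirely to that weight is a misdiagnosis. Restoring $y_n$ makes the rest of your argument go through as in the paper. A small further point: the convergence of the annular sums is driven by the super-exponential decay of $e^{-\eps(i-1)^{4/3}}$ and holds regardless of the sign of $|\alpha|-2j$; the hypothesis $|\alpha|\ge 2j$ is used rather to control $x_n^{jq}\delta_{0,\alpha}(R,x)^q$ and to make the final exponent $(2j-|\alpha|)q$ nonpositive, while $|\alpha|<j+2$ is, as you say, exactly what makes $q\ge 1$ compatible with $q<\frac{n+2}{n-j+|\alpha|}$.
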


\begin{proof}
Let $t > s$ and $x \in \hs$. With $A_i(x) = B_{i \sqrt[4]{t-s}}(x) \setminus B_{(i-1) \sqrt[4]{t-s}}(x)$, we decompose the half space into the annuli
\[
H \, = \, \underset{i\in\N}{\bigcup} A_i(x) \, .
\]
Applying the Gaussian estimate (\ref{ge}), followed by repeated application of the formula (\ref{ball measure}), we obtain
\[
\int_H x_n^{\,j q} \, |\partial_x^\alpha G(t,x,s,y)|^q \, dy
\]
\[
\lesssim \, (1+\sqrt{x_n})^{(2j-|\alpha|) q} \, |B_1(x)|^{1-q} \, \sqrt[4]{t-s}^{2n-2q(n-j+|\alpha|)} \sum_{i\in\N} i^{2n+2q} \, e^{- q c_n^{-1} (i-1)^\frac{4}{3}}
\]
if $|\alpha| \geq 2j$. We subsume the convergent series into the constant and then integrate in $s\in(0,t)$ to get
\[
\int_0^t \| x_n^{\,j} \partial_x^\alpha G(t,x,s,\cdot) \|_{L^q}^q \, ds \, \lesssim \, (1+\sqrt{x_n})^{(2j-|\alpha|) q} \, |B_1(x)|^{1-q} \; \frac{\tau^{\frac{n+2}{2} - \frac{q}{2} ( n-j+|\alpha| )}}{n+2-q (n-j+|\alpha|)} \; \bigg\vert_{\tau=0}^{\tau=t}
\]
which is bounded above by a constant depending on $n,j,\alpha$ and $q$, if $q < \frac{n+2}{n-j+|\alpha|}$. But since also $q \geq 1$, this condition is satisfied as long as $-j + |\alpha| < 2$.
\end{proof}

\begin{rem}
\label{integrability of G wrt t and x}
Following the same line of argument we can show that, for any $2j \leq |\alpha| < j+2$, we have
\[
\bigl( \int_s^1 \| \partial_x^{\alpha} G(t,\cdot,s,y) \|_{L^q(\mu_{jq} )}^q \, dt \bigr)^\frac{1}{q} \, \leq \, c(n,j,\alpha,q) \, (1+\sqrt{y_n})^{2j-|\alpha|} \, |B_1(y)|^{\frac{1}{q}-1}
\]
for all $s \in (0,1]$, almost all $y\in\hs$ and for any $1 \leq q < \frac{n+2}{n-j+|\alpha|}$.
\end{rem}

\subsection{Kernel estimates} % singular integrals
\label{kernel estimates}
We now consider again the inhomogeneous equation $\partial_t u + Lu=f$ with zero Cauchy data and note, using Duhamel's principle, that any solutions can be written in integral form
\[
u(t,x) \, = \, \int_0^t \int_H G(t,x,s,y) f(s,y) \, dy \, ds \, .
\]
This enables us to treat $f \mapsto x_n^{\,j} \partial_t^l \partial_x^\alpha u$ as integral kernel operator on a homogeneous-type space.

\begin{lemma}
\label{Lp boundedness for Schur operators}
Let $1 \leq p \leq \infty$ and $u$ be a solution of the inhomogeneous equation on $[0,1) \times \hs$ with $u(0)=0$. Then
\[
\| x_n^{\,j} \partial_x^\alpha u \|_{L^p((0,1) \times H)} \, \leq \, c(n,j,\alpha) \, \| f \|_{L^p((0,1) \times H)}
\]
for any $j \geq 0$ and any multi-index $\alpha$ with $2j \leq |\alpha| < j+2$, and especially for $j=\frac{|\alpha|}{2}$ if $|\alpha| < 4$.
\end{lemma}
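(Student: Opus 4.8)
The plan is to view $f \mapsto x_n^{\,j}\partial_x^\alpha u$ as an integral kernel operator and to establish its $L^p$-boundedness, uniformly for $p\in[1,\infty]$, via the Schur test. By Duhamel's principle (recalled just above the statement), a solution with $u(0)=0$ is represented by
\[
x_n^{\,j}\partial_x^\alpha u(t,x) \; = \; \int_0^t\!\!\int_H K(t,x,s,y)\, f(s,y)\, dy\, ds \,, \qquad K(t,x,s,y) := x_n^{\,j}\,\partial_x^\alpha G(t,x,s,y) \,;
\]
after a routine truncation of $f$ to make it admissible for the representation in Theorem \ref{Gaussian estimate}, the kernel bounds below pass to the limit. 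The Schur test then reduces matters to the two one-sided integrals $\sup_{(t,x)}\int_0^t\!\int_H|K|\,dy\,ds$ and $\sup_{(s,y)}\int_s^1\!\int_H|K|\,dx\,dt$; once both are finite, the generalized Schur lemma gives $\|x_n^{\,j}\partial_x^\alpha u\|_{L^p}\le c\,\|f\|_{L^p}$ for every $p$.

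For the first integral I would invoke Lemma \ref{certain derivatives of Phi are in Lq} with $q=1$. The hypothesis $2j\le|\alpha|<j+2$ is precisely what is needed there: the strict inequality $|\alpha|<j+2$ makes $\frac{n+2}{n-j+|\alpha|}>1$, so $q=1$ lies in the admissible range, while $|\alpha|\ge 2j$ is the condition under which that lemma's estimate holds. We obtain
\[
\int_0^t\!\!\int_H x_n^{\,j}\,|\partial_x^\alpha G(t,x,s,y)|\,dy\,ds \; \le \; c\,(1+\sqrt{x_n})^{2j-|\alpha|}\,|B_1(x)|^{0} \; = \; c\,(1+\sqrt{x_n})^{2j-|\alpha|} \; \le \; c \,,
\]
the last step using $2j-|\alpha|\le 0$. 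For the second integral I would use Remark \ref{integrability of G wrt t and x}, again with $q=1$; there $\mu_{jq}=\mu_j=x_n^{\,j}\,dx$, so the remark states exactly
\[
\int_s^1\!\!\int_H x_n^{\,j}\,|\partial_x^\alpha G(t,x,s,y)|\,dx\,dt \; \le \; c\,(1+\sqrt{y_n})^{2j-|\alpha|}\,|B_1(y)|^{0} \; \le \; c \,,
\]
uniformly in $s\in(0,1]$ and almost every $y\in\hs$. This closes the Schur test. The final ``especially'' clause is immediate: $j=|\alpha|/2$ automatically gives $2j=|\alpha|$, and $|\alpha|<j+2$ is equivalent to $|\alpha|<4$.

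The main obstacle is not the Schur argument itself, which is mechanical given the two preceding kernel-integrability results, but rather the bookkeeping needed to apply them to a general $L^p$-inhomogeneity: one should check that the Duhamel representation is legitimate for $p\ne 2$ (a density/truncation step, using the decay from Lemma \ref{Gaussian estimate outside of a cylinder} to pass to the limit) and that the pointwise and $L^q$ kernel estimates are genuinely uniform in the base point, so that the suprema in the Schur test are finite and the endpoints $p=1,\infty$ are reached (Fubini at the endpoints, interpolation in between). A secondary point worth flagging is the borderline nature of the exponent window: the strict inequality $|\alpha|<j+2$ cannot be relaxed, since it is exactly the condition placing $q=1$ inside the open interval of admissible Schur exponents.
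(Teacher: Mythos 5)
Your argument matches the paper's proof exactly: it invokes Lemma \ref{certain derivatives of Phi are in Lq} and Remark \ref{integrability of G wrt t and x} with $q=1$ to obtain the two one-sided $L^1$ kernel bounds, observes that the window $2j \le |\alpha| < j+2$ is precisely what makes $q=1$ admissible and the prefactor $(1+\sqrt{x_n})^{2j-|\alpha|}$ bounded by $1$, and then closes with Schur's test. The extra remarks you make about legitimizing the Duhamel representation by truncation and about the borderline nature of $|\alpha|<j+2$ are sound observations that the paper leaves implicit, but the core argument is the same.
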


\begin{proof}
Since $2j \leq |\alpha| < j+2$, we can apply Lemma \ref{certain derivatives of Phi are in Lq} and Remark \ref{integrability of G wrt t and x} to get the following kernel estimates:
\[
\| x_n^{\,j} \partial_x^\alpha G(t,x,\cdot,\cdot) \|_{L^1((0,t) \times H)} \, \lesssim \, (1+\sqrt{x_n})^{2j-|\alpha|} \, \leq \, 1
\]
and
\[
\int_s^1 \| \partial_x^\alpha G(t,\cdot,s,y) \|_{L^1(H, \mu_j)} \, dt \, \lesssim \, (1+\sqrt{y_n})^{2j-|\alpha|} \, \leq \, 1 \, .
\]
This implies that $f \mapsto x_n^{\,j} \partial_x^\alpha u$ is a bounded operator from $L^p\bigl( (0,1) \times H \bigr)$ into itself.\footnote{This statement, due to Schur, is a basic result for  integral kernel operators. A proof is provided in \cite{F84}.}
\end{proof}

Let $V(t,x,s,y)$ be the volume of the ``smallest'' ball centered at $(t,x)$ that contains $(s,y)$. As the volume function $V$ is essentially symmetric, i.e.\ $V(t,x,s,y) \sim V(s,y,t,x)$, it is equivalent to say $V$ is given by
\[
V(t,x,s,y) \, = \, |B_{d_0}(t,x)|_1 + |B_{d_0}(s,y)|_1 \, ,
\]
where $d_0 = d^{(t)}\bigl( (t,x), (s,y) \bigr) = \sqrt[4]{|t-s| + d(x,y)^4}$. Moreover, suppressing all the arguments, we define
\[
D \, = \, \frac{d^{(t)}\bigl((t,x),(\bar{t},\bar{x})\bigr) + d^{(t)}\bigl((s,y),(\bar{s},\bar{y})\bigr)}{d^{(t)}\bigl((t,x),(s,y)\bigr) + d^{(t)}\bigl((\bar{t},\bar{x}),(\bar{s},\bar{y})\bigr)} \, .
\]

\begin{prop}
\label{CZ-kernel estimate}
Let $G$ be the Green function and $K$ be given by any of the following expressions:
\[
y_n^{-1} \partial_t G(t,x,s,y) \, , \; y_n^{-1} D_{\!x}^2 G(t,x,s,y) \, , \; y_n^{-1} x_n D_{\!x}^3 G(t,x,s,y) \, , \quad \text{or} \quad y_n^{-1} x_n^{\,2} D_{\!x}^4 G(t,x,s,y) \, .
\]
Then there exists $C=C(n)>0$ such that $|K(t,x,s,y)| \leq C \, V(t,x,s,y)^{-1}$, and if in addition $D \leq \frac{1}{6}$,
\[
|K(t,x,s,y) - K(\bar{t},\bar{x},\bar{s},\bar{y})| \, \leq \, C \; \frac{D}{V(t,x,s,y)} \; .
\]
\end{prop}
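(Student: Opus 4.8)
The plan is to deduce both inequalities from the Gaussian bounds of Section~\ref{Gaussian estimates}, exploiting the parabolic scaling and the doubling property of the homogeneous-type space $(\R\times\hs,d^{(t)},\mathcal{L}\times\mu_1)$. Throughout write $d_0=d^{(t)}\bigl((t,x),(s,y)\bigr)=\sqrt[4]{|t-s|+d(x,y)^4}$ and $R=\sqrt[4]{t-s}$, so that $0<R\le d_0$ on the half $\{s<t\}$ where $G$ lives, and recall from~(\ref{ball measure}) that $V(t,x,s,y)\sim d_0^{\,4}\,|B_{d_0}(x)|_1\sim d_0^{\,4}\,|B_{d_0}(y)|_1$, the last comparison holding because $d(x,y)\le d_0$ forces $|B_{d_0}(x)|_1\sim|B_{d_0}(y)|_1$ by~(\ref{doubling condition}). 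First note that it suffices to treat the three kernels $K=y_n^{-1}x_n^{\,j}D_{\!x}^{|\alpha|}G$ with $(j,|\alpha|)\in\{(0,2),(1,3),(2,4)\}$: since $G$ solves $\partial_tG+LG=0$ in $(t,x)$ we have $\partial_tG=-x_n^{-1}\Delta(x_n^{\,3}\Delta G)+4\,\Delta'G$, and expanding the first term by the Leibniz rule shows that $y_n^{-1}\partial_tG$ is a linear combination of $y_n^{-1}x_n^{\,2}D_{\!x}^4G$, $y_n^{-1}x_nD_{\!x}^3G$ and $y_n^{-1}D_{\!x}^2G$, hence is covered as well.

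For the size bound, fix such a $K$. Noting that $|\alpha|=j+2$ in each of the three cases, the Gaussian estimate~(\ref{ge}) together with $x_n^{\,j}\le(R+\sqrt{x_n})^{2j}$ and $R^{-|\alpha|}(R+\sqrt{x_n})^{2j-|\alpha|}\le R^{-4}$ gives
\[
|K(t,x,s,y)|\ \le\ c\,R^{-4}\,|B_R(x)|_1^{-1/2}|B_R(y)|_1^{-1/2}\,e^{-c_n^{-1}\bigl(d(x,y)^4/R^4\bigr)^{1/3}}.
\]
If $R\sim d_0$ the right-hand side is already $\lesssim d_0^{-4}|B_{d_0}(x)|_1^{-1}\sim V^{-1}$. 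If instead $R\le d_0/2$, then $d(x,y)\sim d_0$ and the exponent is $\gtrsim(d_0/R)^{4/3}$; by~(\ref{ball measure}) the ratio of the right-hand side to $V^{-1}$ is then bounded by a fixed power of $d_0/R$, which the exponential absorbs. This proves $|K|\lesssim V^{-1}$, and in fact the sharper estimate $|K(t,x,s,y)|\lesssim(R/d_0)\,V(t,x,s,y)^{-1}$, which will be used below. Differentiating~(\ref{ge}) once more in $x$ costs an extra factor $R^{-1}(R+\sqrt{\xi_n})^{-1}$, and once more in $t$ an extra factor $R^{-4}$ (the time derivative also falls on the exponent, which only worsens the constant in the exponential); the same absorption argument then yields $\sqrt{\xi_n}\,|\nabla_\xi K(t,\xi,s,y)|\lesssim V^{-1}/d_0$ and $|\partial_tK|\lesssim V^{-1}/d_0^{\,4}$, provided the evaluation point is at parabolic distance $\sim d_0$ from $(s,y)$.

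Now the regularity bound; assume $D\le\tfrac16$. Elementary manipulations with the triangle inequality for $d^{(t)}$ show that $d^{(t)}\bigl((\bar t,\bar x),(\bar s,\bar y)\bigr)\sim d_0$, that the $d^{(t)}$-distance between any point of $\{(t,x),(\bar t,\bar x)\}$ and any point of $\{(s,y),(\bar s,\bar y)\}$ is $\sim d_0$, and hence---by~(\ref{doubling condition})---that $V$ at any such mixed pair is $\sim V(t,x,s,y)$; moreover $d(x,\bar x)\le d^{(t)}\bigl((t,x),(\bar t,\bar x)\bigr)\lesssim D\,d_0$ and $|t-\bar t|\lesssim(D\,d_0)^4$, and likewise for $y$ and $s$. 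We pass from $(t,x,s,y)$ to $(\bar t,\bar x,\bar s,\bar y)$ in four steps---moving $x\to\bar x$, then $t\to\bar t$, then $y\to\bar y$, then $s\to\bar s$---estimating each increment of $K$ separately. For $x\to\bar x$, integrate along the $d$-geodesic $\gamma$ from $x$ to $\bar x$: by the geometry above $d^{(t)}\bigl((t,\gamma(\sigma)),(s,y)\bigr)\sim d_0$ along $\gamma$, so $\sqrt{\gamma_n(\sigma)}\,|\nabla_\xi K(t,\gamma(\sigma),s,y)|\lesssim V^{-1}/d_0$ by the previous paragraph, and since the $d$-length of $\gamma$ is $d(x,\bar x)\lesssim D\,d_0$ the increment is $\lesssim D\,V^{-1}$. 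For $t\to\bar t$, $|t-\bar t|\cdot\sup|\partial_tK|\lesssim(D\,d_0)^4\cdot V^{-1}/d_0^{\,4}=D^4V^{-1}\le D\,V^{-1}$. The steps $y\to\bar y$ and $s\to\bar s$ are identical, now using Lemma~\ref{Gaussian estimate involving s- and y-derivatives}, which (via the symmetry $G(t,x,s,y)=\tfrac{y_n}{x_n}G(s,y,t,x)$) shows that $y_n^{-1}D_{\!x}^{|\alpha|}G$ solves the degenerate equation in $(s,y)$ and supplies the corresponding Gaussian bounds on its $\partial_s^m\partial_y^\beta$-derivatives. Summing the four contributions gives $|K(t,x,s,y)-K(\bar t,\bar x,\bar s,\bar y)|\lesssim D\,V(t,x,s,y)^{-1}$.

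The main obstacle is the bookkeeping that transports the Gaussian estimate---whose prefactor is naturally expressed at the heat scale $R=\sqrt[4]{t-s}$---to a clean bound at the intrinsic parabolic scale $d_0$, especially when $R\ll d_0$. In that regime every negative power of $R$ (including those created by differentiating $K$) must be paid out of the Gaussian exponential $e^{-c_n^{-1}(d(x,y)^4/(t-s))^{1/3}}$, and one has to verify that the polynomial-in-$(d_0/R)$ corrections produced by~(\ref{ball measure})---relating $|B_R(\cdot)|_1$ to $|B_{d_0}(\cdot)|_1$, and $|B_R(x)|_1$ to $|B_R(y)|_1$---are genuinely dominated by it. This is precisely what upgrades the bare size bound $|K|\lesssim V^{-1}$ to the differentiated estimates $\sqrt{\xi_n}\,|\nabla_xK|\lesssim V^{-1}/d_0$ and $|\partial_tK|\lesssim V^{-1}/d_0^{\,4}$ on which the cancellation in the $D$-estimate rests.
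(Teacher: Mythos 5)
Your argument is correct and follows essentially the same route as the paper's proof: you first derive the size bound $|K|\lesssim V^{-1}$ from the Gaussian estimate (\ref{ge}) by absorbing all powers of $d_0/R$ (with $R=\sqrt[4]{t-s}$) coming from (\ref{ball measure}) into the exponential, and then obtain the H\"older-type bound by telescoping the difference into one-variable increments, integrating along $d$-geodesics in the spatial variables, invoking Lemma~\ref{Gaussian estimate involving s- and y-derivatives} for the $(s,y)$-increments, and using $D\le\frac16$ to keep all intermediate $d^{(t)}$-distances comparable to $d_0$. The only cosmetic difference is that you eliminate $y_n^{-1}\partial_tG$ at the outset via the equation $\partial_tG=-LG$, whereas the paper treats $(j,l,\alpha)=(0,1,0)\in\mathcal{CZ}$ uniformly alongside the spatial cases; both are valid.
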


The kernel estimates in Proposition \ref{CZ-kernel estimate} and the energy estimate in Proposition \ref{energy estimate} provide all that is needed to apply the theory for singular integral operators. For $j,l$ and $\alpha$ admissible in the sense that
\begin{equation}
\label{CZ exponents}
(j,l,\alpha) \, \in \, \mathcal{CZ} \, = \, \bigl\{ (j, l, \alpha) \in [0,\infty) \times \N_0 \times \N_0^{\,n} \mid j=2l + |\alpha| - 2 \; \text{and} \; 2j \leq |\alpha| \bigr\} \, ,
\end{equation}
we obtain that the operator $f \mapsto x_n^{\,j} \partial_t^l \partial_x^\alpha u$ is a Calder\'{o}n-Zygmund operator on a homogeneous-type metric space, and therefore it is bounded on $L^p\bigl( I; L^p(\mu_1) \bigr)$ for any $p \in (1,\infty)$.

\begin{cor}
\label{CZ operators}
Let $I=(0,T)$, $u$ satisfy the linear equation on $[0,T) \times \hs$ to $f \in L^2\bigl( I; L^2(\mu_1) \bigr)$ with $u(0)=0$ and
\[
T: L^2\bigl( I; L^2(H, \mu_{1}) \bigr) \ni f \mapsto x_n^{\,j} \partial_t^l \partial_x^\alpha u \in L^2\bigl( I; L^2(H, \mu_{1}) \bigr) \, .
\]
Then $T$ is a Calder\'{o}n-Zygmund operator on $\bigl( I \times \hs, d_0 \,, \mathcal{L} \times \mu_1  \bigr)$ if and only if $(j, l, \alpha) \in \mathcal{CZ}$, i.e.\ if $T$ assigns to $f$ either
\[
\partial_t u \, , \; D_{\!x}^2 u \, , \; x_n D_{\!x}^3 u \quad \text{or} \quad x_n^{\,2} D_{\!x}^4 u \, .
\]
\end{cor}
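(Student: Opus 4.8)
The plan is to verify that the operator $T\colon f\mapsto x_n^{\,j}\partial_t^l\partial_x^\alpha u$ fits into the abstract Calder\'on--Zygmund framework on the parabolic homogeneous-type space $\bigl(I\times\hs,\,d_0,\,\mathcal L\times\mu_1\bigr)$, and to use the kernel and energy estimates already in hand to cover exactly the admissible triples $(j,l,\alpha)\in\mathcal CZ$. Recall that a Calder\'on--Zygmund operator on a space of homogeneous type is one that (i) is bounded on $L^2$, and (ii) is representable, away from the diagonal, by a kernel obeying a size bound $|K|\lesssim V^{-1}$ and a H\"older-type regularity bound $|K(P)-K(\bar P)|\lesssim D\,V^{-1}$ whenever $D$ is small. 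The standard $T1$/Coifman--Weiss machinery then upgrades $L^2$-boundedness to $L^p$-boundedness for all $p\in(1,\infty)$. So the proof is essentially a bookkeeping argument: match each ingredient required by the abstract theorem to a concrete result proved earlier in the excerpt.

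First I would fix $(j,l,\alpha)\in\mathcal CZ$, so that $j=2l+|\alpha|-2$ and $2j\le|\alpha|$; the constraint $j\ge 0$ then forces $2l+|\alpha|\ge 2$, and combined with $2j\le|\alpha|$ one checks that the only possibilities are $(l,\alpha)$ with $\partial_t^l\partial_x^\alpha\in\{\partial_t,\ D_{\!x}^2,\ x_nD_{\!x}^3,\ x_n^{\,2}D_{\!x}^4\}$ after absorbing the weight $x_n^{\,j}$ (for $D_x^2$ we have $j=0$, for $D_x^3$ we have $j=1$, for $D_x^4$ we have $j=2$, and for $\partial_t$ we have $l=1,\alpha=0,j=0$). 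Thus the list in the statement is exactly $\mathcal CZ$. Next, the $L^2$-boundedness: by Duhamel $u(t,x)=\int_0^t\!\int_H G(t,x,s,y)f(s,y)\,dy\,ds$, and Proposition~\ref{energy estimate} gives $\int_I\|\partial_t u\|_{L^2(\mu_1)}+\|D_{\!x}^2u\|_{W^{2,2}(\mu_1,\mu_3,\mu_5)}\,d\mathcal L\lesssim\int_I\|f\|_{L^2(\mu_1)}\,d\mathcal L$; unpacking the $W^{2,2}(\mu_1,\mu_3,\mu_5)$-norm of $D_x^2u$ yields control of $\|D_x^2u\|_{L^2(\mu_1)}$, $\|x_nD_x^3u\|_{L^2(\mu_3)}=\|x_n^{1/2}\,x_n^{1/2}D_x^3u\|$ hence of $\|x_nD_x^3u\|_{L^2(\mu_1)}$ after shifting one power of $x_n$ into the weight, and similarly $\|x_n^{\,2}D_x^4u\|_{L^2(\mu_1)}$. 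So each $T$ in the admissible list maps $L^2(I;L^2(\mu_1))$ boundedly into itself; combined with the $L^1\to L^\infty$ type estimates this in fact gives a genuine $L^2$ bound. The kernel side is supplied verbatim by Proposition~\ref{CZ-kernel estimate}: writing $K$ for $y_n^{-1}x_n^{\,j}\partial_t^l\partial_x^\alpha G$ (the factor $y_n^{-1}$ is the Radon--Nikodym weight converting the representing integral against $dy$ into one against $d\mu_1=y_n\,dy$, which is the measure of the homogeneous space), we have $|K|\le C\,V^{-1}$ unconditionally, and $|K(t,x,s,y)-K(\bar t,\bar x,\bar s,\bar y)|\le C\,D\,V^{-1}$ whenever $D\le\tfrac16$. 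These are precisely the Calder\'on--Zygmund standard estimates on $(I\times\hs,d_0,\mathcal L\times\mu_1)$, whose doubling property was recorded after~(\ref{doubling condition}).

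With those three inputs in place, the Coifman--Weiss theorem for singular integrals on spaces of homogeneous type yields that each such $T$ extends to a bounded operator on $L^p(I;L^p(\mu_1))$ for every $p\in(1,\infty)$, which is the ``if'' direction. For the ``only if'' direction I would argue that if $(j,l,\alpha)\notin\mathcal CZ$ then the associated kernel $y_n^{-1}x_n^{\,j}\partial_t^l\partial_x^\alpha G$ fails to be scale-invariant of the correct homogeneity: the parabolic scaling $T_\lambda$ from~(\ref{linear scaling}) acts on $\partial_x^\alpha G$ with weight $\lambda^{|\alpha|-2}$ on derivatives and the extra weights $x_n^{\,j}$, $y_n^{-1}$ contribute $\lambda^{2j}$, $\lambda^{-2}$, while $V\sim\lambda^{n+2}$ scales correctly only when the total exponent matches, i.e.\ exactly when $j=2l+|\alpha|-2$; and the second condition $2j\le|\alpha|$ is exactly what Lemma~\ref{certain derivatives of Phi are in Lq} needs so that $x_n^{\,j}\partial_x^\alpha G(t,x,\cdot,\cdot)$ is locally integrable (hence the kernel is honestly a function and not merely a principal-value object with cancellation that the bare size bound would miss). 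A violation of either condition destroys $V^{-1}$-homogeneity or integrability, so $T$ cannot be a Calder\'on--Zygmund operator in the stated sense. The main obstacle, if any, is purely organizational: one must make sure the powers of $x_n$ and $y_n$ are distributed consistently between the operator, the kernel $K$, and the reference measure $\mu_1$ so that Proposition~\ref{CZ-kernel estimate} is quoted with the same conventions as Corollary~\ref{CZ operators}; once that alignment is checked, the abstract theorem applies mechanically.
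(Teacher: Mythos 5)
Your proposal matches the paper's (unwritten) proof: the corollary is stated as a direct consequence of Propositions~\ref{energy estimate} and \ref{CZ-kernel estimate}, precisely the two ingredients you invoke, and your bookkeeping (unpacking $\|D_x^2 u\|_{W^{2,2}(\mu_1,\mu_3,\mu_5)}$ into $\|D_x^2u\|_{L^2(\mu_1)}$, $\|x_nD_x^3u\|_{L^2(\mu_1)}$, $\|x_n^2D_x^4u\|_{L^2(\mu_1)}$; identifying $y_n^{-1}$ as the Radon--Nikodym factor converting $dy$ to $d\mu_1$; enumerating $\mathcal{CZ}$) is correct. Two small remarks. First, your phrase ``combined with the $L^1\to L^\infty$ type estimates this in fact gives a genuine $L^2$ bound'' is superfluous and slightly misleading: the $L^2$-in-time bound already follows directly from testing the equation with $\partial_t u$ as in the proof of Proposition~\ref{energy estimate} (the statement as printed writes $\int_I\|\cdot\|$, but the argument actually controls $\|\cdot\|_{L^2(I;L^2(\mu_1))}$), and no $L^1\to L^\infty$ machinery enters the $L^2$ bound. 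Second, the paper never proves the ``only if'' direction; your scaling heuristic is the right idea, though the exponent tally you write ($x_n^j$ contributing $\lambda^{2j}$, $y_n^{-1}$ contributing $\lambda^{-2}$) is off --- under $T_\lambda(t,x)=(\lambda^2 t,\lambda x)$ the kernel $K=y_n^{-1}x_n^{\,j}\partial_t^l\partial_x^\alpha G$ scales as $\lambda^{\,j-1-n-2l-|\alpha|}$ while $V^{-1}$ scales as $\lambda^{-(n+3)}$, and matching these gives $j=2l+|\alpha|-2$, so the conclusion is right even if the intermediate weights aren't. The constraint $2j\le|\alpha|$ then corresponds, as you say, to the integrability threshold in Lemma~\ref{certain derivatives of Phi are in Lq}. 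Overall the argument is correct and faithfully reconstructs the reasoning the paper leaves implicit.
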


Now, the theory of Muckenhoupt weights provides us with a tool to dispense with the weight. Such a weight on $(\hs,d,\mu_1)$ is a nonnegative and locally $\mu_1$-integrable function $\omega: \hs \to \R$ such that
\[
\sup_B \, |B|_1^{-1} \int_B \omega \, d\mu_1 \bigl[ |B|_1^{-1} \int_B \omega^{-\frac{1}{p-1}} \, d\mu_1 \bigr]^{p-1} \, \leq \, c(p,\omega) \, < \, \infty \, ,
\]
where the supremum is taken with respect to all $d$-balls $B$. We write $\omega \in A_p(\mu_1)$ and the best $A_p$ constant is denoted by $[\omega]_{A_p}$. The main result of this theory is the following: Let $p \in (1,\infty)$, $\omega \in A_p(\mu_1)$ and $T$ a Calder\'{o}n-Zygmund operator. Then there exists a positive constant $c=c(b,p,\omega)$ such that
\begin{equation}
\label{Muckenhoupt's Lp-estimate}
\| Tf \|_{L^p(\omega)} \, \leq \, c \, \| f \|_{L^p(\omega)}
\end{equation}
for all $f\in L^p(\omega)$. Here, $b \geq 1$ is the doubling constant from (\ref{doubling condition}). For further reading on Muckenhoupt theory on spaces of homogeneous type we refer to \cite{K04} and the references therein.

\begin{prop}
\label{weighted Lp-estimate}
Suppose $I=(0,T)$ is open, $f \in L^2\bigl( I; L^2(\mu_1) \bigr)$ and $p \in (1,\infty)$. Assume further that $u$ is an energy solution of $\partial_t u + Lu=f$ on $[0,T) \times \hs$ with $u(0)=0$ and $(j,l,\alpha) \in \mathcal{CZ}$. Then we have
\[
\| x_n^{\sigma+j} \partial_t^l \partial_x^\alpha u \|_{L^p(I \times H)} \, \lesssim \, \| x_n^{\sigma} f \|_{L^p(I \times H)}
\]
for all $- \frac{1}{p} < \sigma < 2 - \frac{1}{p}$. In particular, this holds true for $\sigma \in [0,1]$.
\end{prop}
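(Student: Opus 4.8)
The plan is to deduce the weighted bound from the unweighted Calder\'on--Zygmund boundedness already established, by Muckenhoupt extrapolation against a single power weight. Since $(j,l,\alpha)\in\mathcal{CZ}$, Corollary~\ref{CZ operators} tells us that the solution operator
\[
T\colon\quad f\ \longmapsto\ x_n^{\,j}\,\partial_t^l\partial_x^\alpha u
\]
is a Calder\'on--Zygmund operator on the parabolic space of homogeneous type $\bigl(I\times\hs,\,d_0,\,\mathcal{L}\times\mu_1\bigr)$; its kernel bounds come from Proposition~\ref{CZ-kernel estimate} and its $L^2$-boundedness from the energy estimate of Proposition~\ref{energy estimate}. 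Hence, by the Muckenhoupt estimate~(\ref{Muckenhoupt's Lp-estimate}) applied on that space, for every $p\in(1,\infty)$ and every weight $\omega\in A_p(\mathcal{L}\times\mu_1)$ one has $\|Tf\|_{L^p(\omega)}\lesssim\|f\|_{L^p(\omega)}$, where $L^p(\omega)$ is taken with respect to $\omega\,d(\mathcal{L}\times\mu_1)$. The estimate is a priori in nature, so one first proves it for $f\in L^2\bigl(I;L^2(\mu_1)\bigr)$ that additionally lies in the weighted space --- where $Tf$ is an honest function given by the Green-kernel representation $u(t,x)=\int_0^t\!\!\int_H G\,f\,dy\,ds$ --- and then removes the extra hypothesis by a standard truncation/density argument; if the right-hand side of the asserted inequality is infinite there is nothing to prove.

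Now I feed in the power weight $\omega(t,x)=x_n^{\,\sigma p-1}$, which is independent of $t$. From $\omega\,d\mu_1=x_n^{\,\sigma p}\,dx$ one gets immediately
\[
\|Tf\|_{L^p(\omega)}=\bigl\|x_n^{\,\sigma+j}\,\partial_t^l\partial_x^\alpha u\bigr\|_{L^p(I\times H)}\qquad\text{and}\qquad\|f\|_{L^p(\omega)}=\|x_n^{\,\sigma}f\|_{L^p(I\times H)}\,,
\]
so the Muckenhoupt bound reads exactly as the assertion of the proposition. The whole statement therefore reduces to showing that $x_n^{\,\sigma p-1}\in A_p(\mathcal{L}\times\mu_1)$ whenever $-\tfrac1p<\sigma<2-\tfrac1p$.

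To see this I would first use that $\omega$ does not depend on $t$: a parabolic ball $B$ of radius $R$ centered at $(t_0,x_0)$ is comparable to $(t_0-R^4,t_0+R^4)\times B_R(x_0)$, and since $\omega$ is $t$-independent the $R^4$-factors cancel in the $A_p$-quotient for $B$, which hence reduces to the purely spatial $A_p$-quotient for $B_R(x_0)$. Thus $\omega\in A_p(\mathcal{L}\times\mu_1)$ is equivalent to $x_n^{\,\sigma p-1}\in A_p(\mu_1)$ on $(\hs,d,\mu_1)$. For the latter I would invoke the Euclidean description of intrinsic balls in~(\ref{ball topology}): when $x_{0,n}\gg R^2$ the ball $B_R(x_0)$ stays at height comparable to $x_{0,n}$, where the power weight is essentially constant and the $A_p$-product is $\sim1$ with no constraint; when $x_{0,n}\lesssim R^2$ the ball is comparable to the box $\{\,|x'-x_0'|\lesssim R^2,\ 0<x_n\lesssim R^2\,\}$, and, using $|B_R(x_0)|_1\sim R^{2(n+1)}$ from~(\ref{ball measure}), one computes $\int_{B_R(x_0)}x_n^{\,\sigma p}\,dx$ and $\int_{B_R(x_0)}x_n^{\,p(1-\sigma)/(p-1)}\,dx$ explicitly. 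These integrals are finite precisely when $\sigma p>-1$ and $p(1-\sigma)/(p-1)>-1$, i.e.\ when $-\tfrac1p<\sigma<2-\tfrac1p$, and in that range the powers of $R$ contributed by the two factors cancel, so the $A_p$-product stays bounded uniformly in $R$ and $x_0$. This yields $\omega\in A_p$ and hence the proposition. Finally, $\sigma\in[0,1]$ lies in this range because $-\tfrac1p<0\le\sigma\le1<2-\tfrac1p$ for $p>1$.

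The one genuinely delicate step is this last verification: confirming that the power weight $x_n^{\,\sigma p-1}$ belongs to the Muckenhoupt class of the \emph{degenerate} Carnot--Carath\'eodory geometry and that the integrability thresholds land exactly on the endpoints $-\tfrac1p$ and $2-\tfrac1p$. Everything else is bookkeeping resting on the already-proven fact that $T$ is a Calder\'on--Zygmund operator.
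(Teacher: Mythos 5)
Your proposal is correct and follows essentially the same route as the paper: apply the Calder\'on--Zygmund boundedness from Corollary~\ref{CZ operators} together with the Muckenhoupt estimate~(\ref{Muckenhoupt's Lp-estimate}) to the power weight $\omega=x_n^{\sigma p-1}$, and verify $\omega\in A_p(\mu_1)$ exactly when $-1/p<\sigma<2-1/p$. Your additional care in reducing the parabolic $A_p$ condition to the spatial one via the $t$-independence of $\omega$, and in checking the two ball regimes $x_{0,n}\gg R^2$ versus $x_{0,n}\lesssim R^2$ via~(\ref{ball topology})--(\ref{ball measure}), spells out the ``straightforward calculation'' the paper only alludes to.
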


\begin{proof}
A straightforward calculation, using the definition of the Muckenhoupt class $A_p(\mu_1)$, shows that $x_n^{\,\sigma p-1} \in A_p(\mu_1)$ if and only if $-1 < \sigma p < 2p-1$. But then the statement follows from Corollary \ref{CZ operators} in conjunction with estimate (\ref{Muckenhoupt's Lp-estimate}).
\end{proof}

\subsection{The spaces $X_p$ and $Y_p$} % the function spaces X and Y
\label{function spaces}
The next step consists in defining appropriate function spaces. Here and in the rest of this paper we assume that the initial value $g$ belongs to the homogeneous Lipschitz space $\dot{C}^{0,1}(H)$, that is, $\| g \|_{\dot{C}^{0,1}} = \| \nabla g \|_{L^{\infty}} < \infty$. By Corollary \ref{Lp estimates in the cylinder} this is a natural bound on the solution of the homogeneous initial value problem and hence motivates the definition of a new norm, denoted $X_p$: For $p \in [1,\infty)$, we refer to $X_p$ as the function space of all functions with finite norm
\[
\| u \|_{X_p} \, = \, \| \nabla u \|_{L^{\infty}(I \times H)} + \| u \|_{X_p^1} \, ,
\]
where
\[
\| u \|_{X_p^1} \, = \, \sup_{{R^4 \in (0,T)}\atop{x\in H}} \, |Q_R(x)|^{-\frac{1}{p}} \sum_{(j, l, \alpha) \in \mathcal{CZ}} R^{4l+|\alpha|-1} \, (R+\sqrt{x_n})^{|\alpha|-2j-1} \, \| \partial_t^l \partial_x^\alpha u \|_{L^p(Q_R(x), \mu_{jp})} \, .
\]
By $\bar{B}_\eps^X = \{ u\in X_p \mid \| u \|_{X_p} \leq \eps \}$ we denote the closed $\eps$-ball in $X_p$. \\[1em]
\textbf{Remark:} In order to have any chance to bound this by the inhomogeneity $f$, the sum has to be taken over $(j,l,\alpha) \in \mathcal{CZ}$ (see Proposition \ref{weighted Lp-estimate}), rather than all possible combinations of $j,l$ and $\alpha$ as proposed in Corollary \ref{Lp estimates in the cylinder}. \\[1em]
Finally, we define the Banach space $Y_p$ based on time-space cylinders by
\[
Y_p \; = \; \bigl\{ f \mid \| f \|_{Y_p} = \sup_{{R^4 \in (0,T)}\atop{x\in H}} |Q_R(x)|^{-\frac{1}{p}} \, R^3 \, (R+\sqrt{x_n})^{-1} \, \| f \|_{L^p(Q_R(x))} < \infty \bigr\} \, .
\]

We recall the invariance of the linear equation under the scaling defined in (\ref{linear scaling}) and note that the solution and the initial datum exhibit the same scaling behavior in their respective norms. More precisely,
\[
\lambda \, \| u \|_{X_p} \, \sim \, \| u \circ T_\lambda \|_{X_p} \quad \text{and} \quad 
\lambda \, \| g \|_{\dot{C}^{0,1}} \, = \, \| g(\lambda \, \cdot) \|_{\dot{C}^{0,1}} \, .
\]
As opposed to this, the scaling of the $Y_p$-norm is characterized by the estimate
\begin{equation}
\label{scaling of the Y-norm}
\| f \circ T_\lambda \|_{Y_p} \, \leq \, c(n,p) \, \lambda^{-1} \, \| f \|_{Y_p} \, .
\end{equation}
Our goal now is to show that
\[
\| u \circ T_\lambda \|_{X_p} \, \lesssim \, \lambda^2 \, \| f \circ T_\lambda \|_{Y_p}
\]
for solutions of the linear problem with $u(0)=0$. Then this implies the following crucial result.

\begin{prop}
\label{X-norm vs Y-norm}
Let $I = (0,T)$ be open, $f \in Y_p$ for $p \in (n+2,\infty)$ and $g \in \dot{C}^{0,1}(H)$. Then we have
\[
\| u \|_{X_p} \, \leq \, c(n,p) \, \bigl( \| f \|_{Y_p} + \| g \|_{\dot{C}^{0,1}} \bigr)
\]
for any energy solution $u$ of $\partial_t u + Lu = f$ on $[0,T) \times \hs$ satisfying the initial condition $u(0)=g$.
\end{prop}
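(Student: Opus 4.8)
The plan is to split $u=v+w$ using linearity and the uniqueness statement following Proposition~\ref{energy identity}, where $v$ is the energy solution of the homogeneous equation with $v(0)=g$ and $w$ solves $\partial_t w+Lw=f$ with $w(0)=0$. For $v$ nothing is needed beyond Corollary~\ref{Lp estimates in the cylinder}: summing its conclusion over the finitely many triples in $\mathcal{CZ}$ (each of which has $l\neq0$ or $\alpha\neq0$, so the corollary applies termwise) and taking suprema over $R$ and $x$ gives $\|v\|_{X_p}\le c(n)\,\|g\|_{\dot{C}^{0,1}}$. So the real content is the a priori bound $\|w\|_{X_p}\le c(n,p)\,\|f\|_{Y_p}$, and for this I would first reduce to a single unit cylinder: both summands of $\|w\|_{X_p}$ are suprema over the $Q_R(x)$, and the scaling relations $\lambda\|w\|_{X_p}\sim\|w\circ T_\lambda\|_{X_p}$, $\|f\circ T_\lambda\|_{Y_p}\lesssim\lambda^{-1}\|f\|_{Y_p}$ from~(\ref{scaling of the Y-norm}), together with the invariance~(\ref{linear scaling}) of the equation, reduce matters to bounding, for an arbitrary fixed $x_0\in\hs$, the contribution of $Q_1(x_0)$ to $\|w\|_{X_p^1}$ and the value of $|\nabla w|$ on $Q_1(x_0)$, each by $c(n,p)\,\|f\|_{Y_p}$.

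Writing $w$ by Duhamel's formula, $w(t,x)=\int_0^t\!\int_H G(t,x,s,y)\,f(s,y)\,dy\,ds$, I would decompose the source in dyadic time slabs, $f=\sum_{j\ge0}f_j$ with $f_j=f\,\chi_{(2^{-j-1},2^{-j}]\times H}$, so that $\operatorname{supp}f_j$ matches the time interval of the scale-$2^{-j/4}$ cylinders, and write $w=\sum_j w_j$ accordingly; for $(t,x)\in Q_1(x_0)$ only $w_0$ meets the diagonal singularity of $G$, while $j\ge1$ forces $t-s\sim1$. I would further split each $f_j=f_j^{\mathrm{loc}}+f_j^{\mathrm{far}}$ according to whether $y\in B_c(x_0)$ or not, with $c$ a fixed constant. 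The point of the spatial localization is that $f_j^{\mathrm{loc}}$ has bounded support, hence lies in $L^p(I\times H)$ and may be fed into the global Calder\'on--Zygmund estimate, whereas $f$ itself need not (the $Y_p$-condition does not control the $L^p$-norm over a full Carleson box abutting $\{t=0\}$).

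For the local pieces: when $j=0$, source and target live at the same unit scale, so I would bound the $\mathcal{CZ}$-derivatives of the Duhamel integral of $f_0^{\mathrm{loc}}$ directly by the weighted estimate of Proposition~\ref{weighted Lp-estimate} (with $\sigma=0$), using $\|f_0^{\mathrm{loc}}\|_{L^p}\lesssim(1+\sqrt{x_{0,n}})\,|Q_1(x_0)|^{1/p}\,\|f\|_{Y_p}$ obtained by covering $(\tfrac12,1]\times B_c(x_0)$ with $O(1)$ cylinders; the exponent relation $j=2l+|\alpha|-2$ is precisely what makes the residual powers of $(1+\sqrt{x_{0,n}})$ and $|Q_1(x_0)|$ cancel. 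The first-order term $\nabla w_0^{\mathrm{loc}}$ is not of $\mathcal{CZ}$-type, so for it I would use H\"older's inequality and the $L^{p'}$-bound on $\nabla_x G(t,x,\cdot,\cdot)$ from Lemma~\ref{certain derivatives of Phi are in Lq} with weight exponent $0$ and $|\alpha|=1$, whose admissible range $q<\tfrac{n+2}{n+1}$ is nonempty \emph{exactly because} $p>n+2$ --- this is where that hypothesis is consumed. For $j\ge1$ the kernel and its derivatives are smooth on $\{t-s\sim1\}$ of size $\lesssim|B_1(x_0)|_1^{-1}$ there, so a crude H\"older bound gives $|x_n^{\,j}\partial_t^l\partial_x^\alpha w_j^{\mathrm{loc}}(t,x)|+|\nabla w_j^{\mathrm{loc}}(t,x)|\lesssim|B_1(x_0)|_1^{-1}\,|(2^{-j-1},2^{-j}]\times B_c(x_0)|^{1/p'}\,\|f_j^{\mathrm{loc}}\|_{L^p}$; the short slab contributes $\sim2^{-j/p'}$, the $Y_p$-estimate gives $\|f_j^{\mathrm{loc}}\|_{L^p}\lesssim 2^{\,j(3/4-1/p)}(\cdots)\|f\|_{Y_p}$, and the product decays like $2^{-j/4}$, so $\sum_{j\ge1}$ converges.

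For the far pieces, $f_j^{\mathrm{far}}$ sits outside a $d$-ball about $x_0$, so for $(t,x)\in Q_1(x_0)$ I would invoke the off-cylinder Gaussian bound of Lemma~\ref{Gaussian estimate outside of a cylinder} (equivalently~(\ref{ge}) with the triangle inequality, as in Lemma~\ref{Gaussian estimate involving s- and y-derivatives}), decompose $H\setminus B_c(x_0)$ into dyadic $d$-annuli, and use that the factor $e^{-d(x,y)/c_n}$ --- of super-exponential strength when $j=0$, since then $t-s\sim1$ --- beats the polynomial covering numbers, so the annular sums and the $j$-sum converge. Collecting the local and far contributions, rescaling back, and adding the $v$-bound finishes the proof. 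I expect the genuinely delicate step to be the $j\ge1$ local bookkeeping: one must verify that the weight $R^3(R+\sqrt{x_n})^{-1}$ in $\|\cdot\|_{Y_p}$, the relation $j=2l+|\alpha|-2$ defining $\mathcal{CZ}$, and the volume factors $|Q_R(x)|$ and $|B_R(x)|_1$ conspire to produce a net geometrically decaying series --- uniformly in $x_0$, and in particular in the degenerate regime $x_{0,n}\to0$, where $d$ and the balls $B_R(x_0)$ collapse --- and one should keep track that $p>n+2$ is exactly the threshold making $G$ and $\nabla_x G$ locally $L^{p'}$-integrable, which is what allows $\|\nabla w\|_{L^\infty}$ to be included in the norm.
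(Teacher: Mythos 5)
Your overall strategy matches the paper's: split $u=v+w$ (homogeneous plus inhomogeneous with zero data, with Corollary~\ref{Lp estimates in the cylinder} handling $v$), reduce to a unit cylinder by the scaling~(\ref{linear scaling}) and~(\ref{scaling of the Y-norm}), decompose the source into an on-diagonal part treated by the Calder\'on--Zygmund estimate of Proposition~\ref{weighted Lp-estimate} together with H\"older and Lemma~\ref{certain derivatives of Phi are in Lq} for $\nabla w$ (correctly identifying that $p>n+2$ is exactly what makes $\nabla_x G\in L^{p'}$), and an off-diagonal part controlled by the Gaussian decay. You organize the decomposition differently --- dyadic time slabs at the top level, each split local/far --- whereas the paper makes a single cut $f=\chi_{Q(x_0)}f+(1-\chi_{Q(x_0)})f$ with $Q(x_0)=(\tfrac14,1]\times B_2(x_0)$ and packages the dyadic covering inside Lemma~\ref{pointwise estimate against f with support f outside of a cylinder}; this is a cosmetic rearrangement rather than a genuinely different route.

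There is, however, a genuine gap in the local $j\geq 1$ pieces. You assert that ``$j\ge1$ forces $t-s\sim 1$,'' but this is false at $j=1$: with $(t,x)\in Q_1(x_0)=(\tfrac12,1]\times B_1(x_0)$ and $s\in(\tfrac14,\tfrac12]$, one can have $t\downarrow\tfrac12$ and $s\uparrow\tfrac12$ simultaneously with $y$ near $x$, so $t-s\to0$ and the diagonal singularity of $G$ is fully present in $w_1^{\mathrm{loc}}$. The ``crude H\"older bound'' you propose there, which uses only the sup of the kernel on the slab, therefore fails for $j=1$. This is precisely why the paper draws the on-diagonal cut at $s=\tfrac14$ rather than $s=\tfrac12$: the set $Q(x_0)=(\tfrac14,1]\times B_2(x_0)$ contains your $j=0$ and $j=1$ local slabs, and both are fed into the Calder\'on--Zygmund estimate, which is designed to absorb exactly this near-diagonal singularity. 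The fix is simple --- either absorb the $j=1$ slab into the CZ-treated piece, or redefine the initial slab to $s\in(\tfrac14,1]$ --- after which your crude H\"older argument is valid for $j\geq 2$ since then $t-s\geq\tfrac14$. A minor further slip: in the far-piece discussion you say the exponential factor is ``of super-exponential strength when $j=0$, since then $t-s\sim1$''; the logic is backwards --- for $j=0$ one has $t-s\leq\tfrac12$ and possibly small, and it is \emph{small} $t-s$ that makes $e^{-\varepsilon(d(x,y)^4/(t-s))^{1/3}}$ decay super-exponentially in $d(x,y)$, while $t-s\sim1$ gives merely the exponential decay recorded in Lemma~\ref{Gaussian estimate outside of a cylinder}. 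This does not affect the conclusion, but the stated reason is wrong.
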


%%%%%%%%%%%%%%%%%%%%%%%%%%%%%%%%%%%%%%%%%%%%%%%%%%%%%%%%%%%%

\section{Main results} % main results
\label{main results}
In this section we finally turn to the nonlinear equation for the perturbation of the steady state given by
\[
\tag{pe}
\partial_t u + Lu \, = \, f_0[u] + x_n \, f_1[u] + x_n^{\,2} \, f_2[u] \, , \quad u(0) \, = \, g \, ,
\]
see Lemma \ref{nonlinearity}. Using Duhamel's principle, we can rewrite this equation on $[0,T) \times \hs$ in integral form
\begin{equation}
\label{solution via Duhamel}
F_g(u) \, = \, F(g,u) \, = \, Sg + \Psi f[u] \, ,
\end{equation}
where 
\[
S g(t,x) \, = \, \int_H G(t,x,0,y) g(y) \, dy \quad \text{and} \quad \Psi f[u](t,x) \, = \, \int_0^t \int_H G(t,x,s,y) f[u](s,y) \, dy ds \, .
\] 
In the first part of this section we can use the linear estimates obtained in section \ref{linear model case} to implement a fixed point argument in the function space $X_p$. In order to do so, one needs to impose additional requirements concerning the nonlinearity $f[u]$, the maximal time of existence $T$ or the initial data $g$, at least one of which needs to be small. Much in the spirit of Koch and Lamm \cite{KL12} we reach global existence and uniqueness for (\ref{perturbation equation}) from small Lipschitz data.

The goal for the rest of this section is to use the unique solution to generate a solution $h$ on its positivity set $P(h)$.

\subsection{Nonlinear estimates} % the inhomogeneity
\label{nonlinear estimates}

Under a smallness assumption on the solution we obtain the estimate from Proposition \ref{X-norm vs Y-norm} in the opposite direction, i.e.\ an estimate of the $Y_p$-norm by the $X_p$-norm.

\begin{lemma}
\label{Y-norm vs X-norm}
Let $I=(0,T)$ be an open interval, $p \in [1,\infty)$ and $\eps<\frac{1}{2}$. We further suppose that $f: X_p \to Y_p$ is defined as in Lemma \ref{nonlinearity}. Then the operator $f: \bar{B}_\eps^X \to Y_p$ is analytic and we have the estimates
\[
\| f[u] \|_{Y_p} \, \leq \, c \, \| u \|_{X_p}^2
\]
for all $u \in \bar{B}_\eps^X$ and
\[
\| f[u_1] - f[u_2] \|_{Y_p} \, \leq \, c  \, \bigl( \| u_1 \|_{X_p} + \| u_2 \|_{X_p} \bigr) \, \| u_1 - u_2 \|_{X_p}
\]
for all $u_1, u_2 \in \bar{B}_\eps^X$, where the constant $c$ depends only on $n$ and $p$.
\end{lemma}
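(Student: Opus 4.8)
The plan is to reduce everything to scale-invariant $L^p$ estimates on parabolic cylinders $Q_R(x)$, using the algebraic structure of $f[u]$ recorded in Lemma \ref{nonlinearity} and Remark \ref{decomposition of f_i^k}. Recall that each $f_i[u]$ is a sum of terms of the schematic form $x_n^{\,i} \cdot \phi(\nabla u) \star P_{k_1}(D_x^{a_1}u) \star \cdots \star P_{k_r}(D_x^{a_r}u)$, where $\phi$ is built from $P_j(\nabla u)$ and $(1+\partial_{x_n}u)^{-m}$; on $\bar{B}_\eps^X$ with $\eps<\tfrac12$ we have $\|\nabla u\|_{L^\infty}\le\eps<\tfrac12$, so $\phi(\nabla u)$ is bounded and, being a composition of an analytic function with the bounded-and-analytic map $u\mapsto\nabla u$ into $L^\infty$, depends analytically on $u$. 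So the analyticity claim will follow once the quadratic bound is in place, by the standard fact that a locally bounded map given by a norm-convergent power series of multilinear terms is analytic; the Lipschitz estimate is then just the difference bound for the quadratic map, obtained by the telescoping identity $A_1B_1-A_2B_2 = (A_1-A_2)B_1 + A_2(B_1-B_2)$ applied factor by factor.

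The heart of the matter is the bound $\|f[u]\|_{Y_p}\lesssim\|u\|_{X_p}^2$, which I would prove term by term. Fix a cylinder $Q_R(x)$; I must show $|Q_R(x)|^{-1/p}R^3(R+\sqrt{x_n})^{-1}\|x_n^{\,i}\,\phi(\nabla u)\prod_\nu \partial_t^{l_\nu}\partial_x^{\alpha_\nu}u\|_{L^p(Q_R(x))}\lesssim\|u\|_{X_p}^2$. Pull $\phi(\nabla u)$ out in $L^\infty$ (it costs a constant). For a genuine product of two or more derivative factors, put \emph{one} of them in $L^p(Q_R(x),\mu_{j_1p})$ using the definition of $\|u\|_{X_p^1}$ (choosing the CZ-triple $(j_1,l_1,\alpha_1)$ matching that factor) and put \emph{all the others} in $L^\infty(Q_R(x))$. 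The $L^\infty$ bounds on derivatives come from Corollary \ref{Lp estimates in the cylinder} / the pointwise philosophy: $\|\partial_t^l\partial_x^\alpha u\|_{L^\infty(Q_R(x))}\lesssim \delta_{l,\alpha}(R,x)^{-1}$-type factors times $\|u\|_{X_p}$ — more precisely one uses that the $X_p$-norm controls $R^{4l+|\alpha|-1}(R+\sqrt{x_n})^{|\alpha|-2j-1}$-weighted $L^p$ averages, and on a cylinder an $L^p$ average of a solution-like quantity dominates (after shrinking) the sup, giving $\|\partial_t^l\partial_x^\alpha u\|_{L^\infty(Q_R(x))}\lesssim R^{1-4l-|\alpha|}(R+\sqrt{x_n})^{1-|\alpha|+2j}\|u\|_{X_p}$ for admissible $(j,l,\alpha)$, and for the special low-order factors like $\nabla u$ directly $\|\nabla u\|_{L^\infty}\le\|u\|_{X_p}$. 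Then one simply checks that the product of all the explicit powers of $R$, of $(R+\sqrt{x_n})$, and of $x_n^{\,i}\le(R+\sqrt{x_n})^{2i}$, together with the prefactor $R^3(R+\sqrt{x_n})^{-1}$, collapses to a constant. This bookkeeping is forced to work precisely because both sides of the desired inequality are invariant under the scaling $T_\lambda$ of \eqref{linear scaling} (with the $Y_p$-scaling \eqref{scaling of the Y-norm} and the $X_p$-scaling recorded before Proposition \ref{X-norm vs Y-norm}): the homogeneity of $f[u]$ as a quadratic-or-higher nonlinearity matches $\lambda^{-1}$ on the left against $(\lambda\,\|u\|_{X_p})^2\cdot\lambda^{-1}\cdot(\text{wrong power?})$ — so the scaling check both predicts the exponent identities and flags any term that would fail.

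The main obstacle, and the reason the decomposition of Remark \ref{decomposition of f_i^k} is invoked, is that $f[u]$ contains terms with an $L^p$-uncontrollable leading factor, namely the top-order derivatives $x_n^{\,2}D_x^4u$, $x_nD_x^3u$, etc., which are \emph{exactly} the CZ quantities — so for the term $x_n^{\,2}\phi(\nabla u)\star\nabla u\star D_x^4u$ one must spend the $L^p$ slot on $D_x^4u$ weighted by $x_n^{\,2}$ (i.e.\ the triple with $j=2$, $l=0$, $|\alpha|=4$, which indeed lies in $\mathcal{CZ}$ since $2\cdot2\le4$ and $2=2\cdot0+4-2$), while $\nabla u$ must go into $L^\infty$ — good, since $\|\nabla u\|_{L^\infty}$ is controlled — and $\phi(\nabla u)$ into $L^\infty$. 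The genuinely delicate point is that $x_n^{\,2}$ itself is \emph{not} bounded on $Q_R(x)$ when $x_n\gg R^2$: one must carry $x_n^{\,2}\sim(R+\sqrt{x_n})^4$ on cylinders far from the boundary and $x_n^{\,2}\lesssim R^4$ near it, i.e.\ uniformly $x_n^{\,2}\lesssim(R+\sqrt{x_n})^4$, and verify that this power is exactly absorbed by the $(R+\sqrt{x_n})^{|\alpha|-2j-1}=(R+\sqrt{x_n})^{4-4-1}=(R+\sqrt{x_n})^{-1}$ weight of the $X_p^1$-term together with the explicit $(R+\sqrt{x_n})^{-1}$ in the $Y_p$-norm and the $L^\infty$-weights of the remaining factors. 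For the purely second-order terms $P_2(D_x^2u)$, $P_3(D_x^2u)$ one similarly uses the CZ-triple $(0,0,\alpha)$ with $|\alpha|=2$ for one factor and $L^\infty$ on $D_x^2u$ for the rest, which needs $\|D_x^2u\|_{L^\infty(Q_R(x))}\lesssim R^{-1}(R+\sqrt{x_n})^{-1}\|u\|_{X_p}$, again delivered by the pointwise estimate machinery of section \ref{pointwise estimates}. Once all finitely many term-types from Lemma \ref{nonlinearity} are checked this way, summing gives $\|f[u]\|_{Y_p}\lesssim\|u\|_{X_p}^2$; the difference estimate and analyticity then follow as indicated above.
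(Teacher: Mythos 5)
Your plan correctly reproduces the paper's overall skeleton: pull $f_i^k(\nabla u)$ out in $L^\infty$ via Remark~\ref{decomposition of f_i^k} (using $\eps<\tfrac12$), note that $R^3(R+\sqrt{x_n})^{-1}$ is dominated by each $X_p^1$-weight, spend the $L^p$ slot on the single highest-order factor with its matching CZ-triple, and handle the Lipschitz estimate by a telescoping decomposition together with the algebraic identity for $(1+\partial_{x_n}u_1)^{-m}-(1+\partial_{x_n}u_2)^{-m}$. The terms $f_i^1(\nabla u)\star\nabla u\star D_x^{i+2}u$ are indeed settled exactly as you say: one CZ factor in $L^p$, $\nabla u$ and $f_i^1(\nabla u)$ in $L^\infty$.

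But there is a genuine gap where you treat the product terms $P_2(D_x^2u)$, $D_x^2u\star D_x^3u$, $P_3(D_x^2u)$. You propose to place one factor in $L^p(Q_R(x))$ via the $X_p^1$-norm and all the remaining second- or third-order derivatives in $L^\infty(Q_R(x))$, citing ``the pointwise estimate machinery of section~\ref{pointwise estimates}.'' That machinery (Proposition~\ref{pointwise estimate} and its corollaries) applies to \emph{solutions of the homogeneous linear equation}, where local $L^2$ control of $u$ upgrades to pointwise bounds on all derivatives. A generic element of $X_p$ is not such a solution; the $X_p$-norm gives you $\|\nabla u\|_{L^\infty}$ globally and weighted $L^p$ bounds of CZ-derivatives on cylinders, but no $L^\infty$ bound on $D_x^2u$. (Nor does a Morrey-type embedding save you: the lemma is stated for all $p\in[1,\infty)$, and even for large $p$ the CZ family only controls $D_x^4u$ with a degenerate weight $x_n^2$, so there is no uniform Sobolev gain across the half-space.) In short, the inequality $\|D_x^2u\|_{L^\infty(Q_R(x))}\lesssim R^{-1}(R+\sqrt{x_n})^{-1}\|u\|_{X_p}$ is not available, and your bookkeeping for those terms does not close.

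The paper avoids this by staying on the $L^p$ scale: after the single $L^\infty$ factor $f_i^k(\nabla u)$ and, if present, $\nabla u$ are peeled off, the remaining product of higher-order derivatives is split by H\"older and each factor is estimated in an intermediate $L^r(Q_R(x),\mu_\sigma)$ using a localized, weighted Gagliardo--Nirenberg interpolation inequality
\[
\| D_{\!x}^j u \|_{L^r(\mu_\sigma)} \, \leq \, c \, \| u \|_{L^q(\mu_\sigma)}^{1-\theta} \, \| D_{\!x}^{k} u \|_{L^p(\mu_\sigma)}^\theta \, , \qquad \tfrac{1}{r}=\tfrac{1-\theta}{q}+\tfrac{\theta}{p} \, , \quad \theta = \tfrac{j}{k} \, ,
\]
which interpolates between the global $L^\infty$ control of $\nabla u$ and the CZ-level $L^p$ control coming from $\|u\|_{X_p^1}$. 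That is the ingredient your proposal is missing, and it is precisely what makes the exponent bookkeeping close for the genuinely quadratic and cubic products of second-order derivatives.
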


Next we combine the results from Propositoin \ref{X-norm vs Y-norm} and Lemma \ref{Y-norm vs X-norm} to prove the following theorem.

\begin{thm}
\label{well-posedness of the nonlinear problem}
Let $T>0$ and $p \in (n+2,\infty)$. Then there exist $\eps, c > 0$ such that for every $g \in \dot{C}^{\, 0,1}(H)$ with $\| g \|_{\dot{C}^{0,1}}<\eps$ there exists a solution $u^* \in X_p$ of the perturbation equation (\ref{perturbation equation}) for which
\[
\| u^* \|_{X_p} \; \leq \; c(n,p) \, \| g \|_{\dot{C}^{\,0,1}}
\]
holds. Moreover, this solution is unique in $\bar{B}_{c\eps}^X$.
\end{thm}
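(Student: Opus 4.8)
The plan is to solve the fixed point equation $u = F_g(u) = Sg + \Psi f[u]$ on the closed ball $\bar{B}_{c\eps}^X$ via the Banach fixed point theorem, so the entire argument reduces to verifying that $F_g$ is a contractive self-map on this ball for suitable $\eps$ and $c$. First I would record the two linear ingredients: by Proposition \ref{X-norm vs Y-norm}, applied with zero initial data, $\| \Psi f[u] \|_{X_p} \leq c_0(n,p) \, \| f[u] \|_{Y_p}$, and applied with $f = 0$, $\| Sg \|_{X_p} \leq c_0(n,p) \, \| g \|_{\dot{C}^{0,1}}$; by linearity of the Duhamel term, $\Psi(f[u_1] - f[u_2])$ obeys the same bound with $f[u_1]-f[u_2]$ in place of $f[u]$. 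The two nonlinear ingredients come from Lemma \ref{Y-norm vs X-norm}: $\| f[u] \|_{Y_p} \leq c_1 \, \| u \|_{X_p}^2$ and $\| f[u_1] - f[u_2] \|_{Y_p} \leq c_1 (\| u_1 \|_{X_p} + \| u_2 \|_{X_p}) \, \| u_1 - u_2 \|_{X_p}$ on $\bar{B}_\eps^X$, valid for $\eps < \tfrac12$.

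Next I would choose the constants. Set $c = 2 c_0$ and require $\eps$ small enough that $c\eps < \tfrac12$ (so Lemma \ref{Y-norm vs X-norm} applies on $\bar{B}_{c\eps}^X$) and also $2 c_0 c_1 \cdot c\eps \leq \tfrac12$. Then for $u \in \bar{B}_{c\eps}^X$ and $\| g \|_{\dot{C}^{0,1}} < \eps$ we estimate
\[
\| F_g(u) \|_{X_p} \, \leq \, c_0 \| g \|_{\dot{C}^{0,1}} + c_0 c_1 \| u \|_{X_p}^2 \, \leq \, c_0 \eps + c_0 c_1 (c\eps)^2 \, \leq \, c_0\eps \bigl( 1 + c_0 c_1 c^2 \eps \bigr) \, \leq \, 2 c_0 \eps \, = \, c\eps \, ,
\]
using $c_0 c_1 c^2 \eps = 2 c_0 c_1 \cdot c\eps \leq 1$; hence $F_g$ maps $\bar{B}_{c\eps}^X$ into itself. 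For the contraction estimate, for $u_1, u_2 \in \bar{B}_{c\eps}^X$,
\[
\| F_g(u_1) - F_g(u_2) \|_{X_p} \, \leq \, c_0 c_1 \bigl( \| u_1 \|_{X_p} + \| u_2 \|_{X_p} \bigr) \| u_1 - u_2 \|_{X_p} \, \leq \, 2 c_0 c_1 c\eps \, \| u_1 - u_2 \|_{X_p} \, \leq \, \tfrac12 \| u_1 - u_2 \|_{X_p} \, ,
\]
so $F_g$ is a $\tfrac12$-contraction. Since $X_p$ is a Banach space and $\bar{B}_{c\eps}^X$ is closed, there is a unique fixed point $u^* \in \bar{B}_{c\eps}^X$, which by construction solves (\ref{solution via Duhamel}) and therefore, by Duhamel's principle, is an energy solution of (\ref{perturbation equation}) with $u^*(0) = g$. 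The bound $\| u^* \|_{X_p} \leq c_0 \| g \|_{\dot{C}^{0,1}} + c_0 c_1 \| u^* \|_{X_p}^2 \leq c_0 \| g \|_{\dot{C}^{0,1}} + \tfrac12 \| u^* \|_{X_p}$ can then be absorbed to give $\| u^* \|_{X_p} \leq 2 c_0 \| g \|_{\dot{C}^{0,1}} = c \, \| g \|_{\dot{C}^{0,1}}$, and uniqueness in $\bar{B}_{c\eps}^X$ is part of the fixed point statement.

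The one genuinely delicate point — and the only place where $p > n+2$ is used — is the invocation of Proposition \ref{X-norm vs Y-norm}, which requires $f[u^*] \in Y_p$; this is exactly guaranteed by Lemma \ref{Y-norm vs X-norm} once we know $u^* \in \bar{B}_\eps^X$, so the argument is self-consistent provided the ball radius $c\eps$ stays below $\tfrac12$. I would also remark that the restriction $p \in (n+2,\infty)$ propagates silently from Proposition \ref{X-norm vs Y-norm} into the hypotheses of the theorem, and that one should note the slight bookkeeping subtlety that the "$\eps$" appearing in Lemma \ref{Y-norm vs X-norm} (the ball radius in $X_p$) and the "$\eps$" bounding $\| g \|_{\dot{C}^{0,1}}$ are linked precisely through $c = 2c_0$; beyond that, everything is a routine application of the contraction principle and no further obstacle arises.
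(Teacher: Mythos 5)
Your argument is correct and follows the same route as the paper: define $F_g$ via Duhamel's formula, invoke Proposition \ref{X-norm vs Y-norm} and Lemma \ref{Y-norm vs X-norm} to show $F_g$ is a contractive self-map on a small ball in $X_p$, and apply the Banach fixed point theorem; the paper is simply terser about the constant bookkeeping. (Minor slip: in the self-map estimate $c_0\eps + c_0 c_1 (c\eps)^2 = c_0\eps(1 + c_1 c^2\eps)$, not $c_0\eps(1 + c_0 c_1 c^2\eps)$, but since $c_1 c^2\eps = 2c_0 c_1\cdot c\eps \leq \tfrac12$ the chain of inequalities still closes.)
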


\begin{proof}
For every $g \in \dot{C}^{0,1}(H)$, by (\ref{solution via Duhamel}) there is an operator $F_g: X_p \ni u \mapsto \tilde{u} \in X_p$ defined such that
\[
\partial_t \tilde{u} + L\tilde{u} \, = \, f[u] \, , \quad \tilde{u}(0) \, = \, g \, .
\]
Due to Proposition \ref{X-norm vs Y-norm} and Lemma \ref{Y-norm vs X-norm} there exist $\delta, \eps > 0$ such that, for all $g \in \dot{C}^{0,1}(H)$ with $\| g \|_{\dot{C}^{0,1}} < \eps$, the mapping $F_g: \bar{B}_\delta^X \to \bar{B}_\delta^X$ has a unique fixed point $u^* \in \bar{B}_\delta^X$ that depends on the initial value in a Lipschitz continuous way. Indeed, using the second inequality in Lemma \ref{Y-norm vs X-norm}, we arrive at
\[
\| F_g(u_1) - F_g(u_2) \|_{X_p} \, \leq \, c_L \, \| u_1 - u_2 \|_{X_p}
\]
for a Lipschitz constant $c_L \in (0,1)$ provided that $u_1, u_2 \in X_p$ are chosen sufficiently small. Since $F_g(u^*)=u^*$, this turns out to be the unique solution of (\ref{perturbation equation}) we have been looking for.
\end{proof}

In addition, we get that the unique solution obtained in Theorem \ref{well-posedness of the nonlinear problem} is analytic in temporal and all tangential directions. Analyticity in vertical direction ($x_n$-direction) is still an open problem.

\begin{prop}
\label{analyticity in t and x'}
Let $u^* \in \bar{B}_{c\eps}^X$ be the unique solution of (\ref{perturbation equation}). This solution depends analytically on the initial data $g \in \dot{C}^{0,1}(H)$. Moreover, $u^*$ is analytic in temporal and all tangential directions, and there exists a number $R>0$ such that for any $l \in \N_0$ and for any multi-index $\alpha' \in \N_0^{\,n-1}$ the estimate
\[
\sup_{t\in I} \, \sup_{x\in H} \, | t^{l+\frac{1}{2} |\alpha'|} \, \partial_t^l \partial_x^{\alpha'} \nabla_{\!x} u^*(t,x)| \, \leq \, c \, R^{-l-|\alpha'|} \, l! \, \alpha'! \, \| g \|_{\dot{C}^{0,1}}
\]
holds with a constant $c>0$ depending only on $n$ and $R$.
\end{prop}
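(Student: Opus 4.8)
The plan is to follow the Angenent--Koch--Lamm scheme: analyticity in a parameter is extracted from a parameter-dependent fixed-point problem by exploiting that the solution map $g \mapsto u^*$ is a fixed point of a family of contractions depending \emph{analytically} on an auxiliary scaling/shift parameter, and then using a Cauchy-estimate argument on the resulting analytic map. First I would observe that the dependence on $g$ is the easy part: the fixed point $u^*$ of $F_g = Sg + \Psi f[\,\cdot\,]$ is obtained by the contraction mapping principle with a uniform Lipschitz constant $c_L < 1$, and $F_g$ depends affinely (hence analytically) on $g$ through the bounded linear operator $S$; since $f$ is analytic on $\bar B_\eps^X$ by Lemma~\ref{Y-norm vs X-norm}, the implicit-function theorem in the analytic category (e.g.\ the analytic version used in \cite{KL12}) gives that $g \mapsto u^*(g)$ is analytic as a map $\dot C^{0,1}(H) \to X_p$.

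For analyticity in time and the tangential directions, I would introduce the family of transformations that act on $X_p$ by the scaling $T_\lambda$ of \eqref{linear scaling} composed with tangential translations $\tau_{x'}$: for $\lambda$ near $1$ and $x' \in \R^{n-1}$ near $0$ (allowed complex in a small polydisc), set $u_{\lambda, x'} = \lambda^{-2}\, u\circ T_\lambda \circ \tau_{x'}$. The point, already recorded in section~\ref{transformation}, is that the nonlinear equation \eqref{perturbation equation} is \emph{invariant} under $T_\lambda$ (up to the stated homogeneity of $L$ and $f$) and is obviously invariant under tangential translations, while the norms $\|\cdot\|_{X_p}$ and $\|\cdot\|_{\dot C^{0,1}}$ scale compatibly. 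Hence $u_{\lambda, x'}$ solves the \emph{same} type of fixed-point equation with initial datum $g_{\lambda,x'} = \lambda^{-2} g(\lambda\, \tau_{x'}\,\cdot\,)$, which still has small $\dot C^{0,1}$-norm. One then checks that the map $(\lambda, x') \mapsto F_{g_{\lambda,x'}}$, valued in analytic self-maps of a small ball in $X_p$, extends holomorphically to a complex neighborhood of $(1,0)$ — this is where the explicit form of $f[u]$ in Lemma~\ref{nonlinearity}, together with Remark~\ref{decomposition of f_i^k} (the factors $(1+\partial_{x_n}u)^{-m}$ are analytic in $u$ as long as $\|\nabla u\|_\infty < 1$, which holds on $\bar B_\eps^X$), and the linear estimate Proposition~\ref{X-norm vs Y-norm} do the work. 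By uniqueness of the fixed point and analytic dependence of the fixed point on the parameter, $(\lambda, x') \mapsto u_{\lambda,x'} \in X_p$ is analytic near $(1,0)$.

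Finally I would convert this parameter-analyticity into the pointwise derivative bounds. Evaluating $u_{\lambda,x'}$ and differentiating in $\lambda$ at $\lambda = 1$ produces the Euler operator $t\partial_t + \tfrac12 x\cdot\nabla_x$ applied to $u^*$, while differentiating in $x'$ produces $\partial_{x'}$; iterating and using Cauchy's estimate on the polydisc of holomorphy (radius $\sim R$) yields
\[
\| (t\partial_t + \tfrac12 x\cdot\nabla_x)^l\, \partial_{x'}^{\alpha'}\, u^* \|_{X_p} \, \leq \, c\, R^{-l-|\alpha'|}\, l!\, \alpha'!\, \| g \|_{\dot C^{0,1}} \, .
\]
Since $X_p$ controls $\|\nabla u\|_{L^\infty(I\times H)}$, and since one can algebraically invert the triangular relation between $(t\partial_t + \tfrac12 x\cdot\nabla_x)^l$ and the monomials $t^l\partial_t^l$ modulo lower-order terms (the extra $x\cdot\nabla_x$ pieces are absorbed by reiterating the tangential-translation estimates, and the $x_n\partial_{x_n}$ piece is handled because only $\nabla u^*$, not higher $x_n$-derivatives, is being estimated), one recovers the weighted bound $\sup_{t,x} |t^{l+\frac12|\alpha'|}\partial_t^l\partial_x^{\alpha'}\nabla_x u^*| \le c R^{-l-|\alpha'|} l!\,\alpha'!\, \|g\|_{\dot C^{0,1}}$ claimed in the proposition. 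The main obstacle is the second step: verifying that the reparametrized nonlinearity $f[u_{\lambda,x'}]$, with all the weights $x_n^j$ and inverse factors $(1+\partial_{x_n}u)^{-m}$, depends holomorphically on the complexified parameters while still mapping a fixed small ball of $X_p$ into $Y_p$ with norm bounds uniform in the parameters — this requires carefully tracking how the anisotropic scaling interacts with the $x_n$-weights built into the $X_p$ and $Y_p$ norms, and is exactly the place where the structural Lemma~\ref{nonlinearity} and the scale-invariance computations at the end of section~\ref{transformation} must be used in full.
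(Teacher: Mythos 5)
Your high-level strategy --- recast the fixed-point problem $u^* = F_g(u^*)$ as a parameter-dependent family, complexify the parameter, and extract factorial bounds via Cauchy estimates --- is the Angenent/Koch--Lamm scheme the paper itself points to. But the concrete parameter family you propose, $u_{\lambda,x'} = \lambda^{-2}\, u \circ T_\lambda \circ \tau_{x'}$ built from the full parabolic scaling $T_\lambda$ and a \emph{time-independent} tangential shift $\tau_{x'}$, does not work, for three related reasons. (i) Your family moves the initial datum: $u_{\lambda,x'}(0,\cdot) = \lambda^{-2} g\bigl(\lambda(\cdot+ x')\bigr)$. Analytic dependence of the fixed point on $(\lambda,x')$ would require $(\lambda,x') \mapsto S g_{\lambda,x'}$ to be holomorphic into $X_p$; but $\|\cdot\|_{X_p}$ contains $\|\nabla u\|_{L^\infty(I\times H)}$ with the supremum reaching $t\to 0^+$, so this forces $\lambda \mapsto \nabla g(\lambda\,\cdot) \in L^\infty$ to be continuous, which fails for generic Lipschitz $g$ (take $\nabla g$ a characteristic function). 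The Angenent trick is engineered exactly to avoid this: the initial datum must be \emph{kept fixed} and the parameter must enter only the coefficients of the evolution, e.g.\ via $(t,x) \mapsto (\lambda^2 t, x' + t a', x_n)$, which leaves $u(0,\cdot) = g$ unchanged for all $(\lambda, a')$. (ii) $T_\lambda$ scales $x_n$, so $\partial_\lambda|_{\lambda=1}$ produces the full Euler operator $2t\partial_t + x'\cdot\nabla' + x_n\partial_{x_n}$; iterating the Cauchy estimate yields terms $x_n^k\partial_{x_n}^k$ with $k$ up to $l$, and your suggestion that these are harmless ``because only $\nabla u^*$ is being estimated'' does not hold up --- expanding $(t\partial_t + \tfrac12 x\cdot\nabla_x)^l$ and isolating $t^l\partial_t^l$ requires precisely the higher $x_n$-derivative control the paper explicitly declares open. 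The family must not touch $x_n$. (iii) A time-independent $\tau_{x'}$ gives $\partial_{x'}$ (with no $t$-factor) at the identity, so the Cauchy estimate would claim a $t$-uniform radius of analyticity in $x'$; this is too strong, since the true radius shrinks like $t^{1/2}$ as $t\to 0^+$ (which is exactly what the weight $t^{\frac{1}{2}|\alpha'|}$ in the statement encodes). The correct family translates by $t a'$, yielding $t\partial_{x'}$ at the identity, and the extra gain from $t$ down to $t^{1/2}$ comes from the weights already built into $X_p$, not from a uniform polydisc. So the framework is right, but the parameter group you chose both moves the data and scales the wrong direction; replace $T_\lambda \circ \tau_{x'}$ by the fixed-data family above and the argument closes.
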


As our equation is non-degenerate in $t$ and $x'$, the same arguments as used for the related equation
\[
\partial_t u + \Delta^2 u \, = \, f_0[u] + \nabla f_1[u] + D_{\!x}^2 f_2[u]
\]
apply. For full details we refer to the paper \cite{KL12} by Koch and Lamm.

\subsection{Proof of Theorem \ref{uniqueness}} % conclusion
\label{uniqueness, stability and regularity}
Before we can prove the main result of this paper, we show that the change of coordinates $(t,x) \mapsto (s,y)$ introduced in section \ref{local coords} is a quasi-isometry. 

\begin{lemma} % von Mises transformation is almost isometric
\label{almost isometry}
Let $\phi: x \mapsto \bigl(x', v(x) \bigr)$ with $v: \R^n \to \R$ satisfying $|\nabla_{\!x}v-e_n| < \eps$ for an $\eps < 1$. Then we have
\[
(1-\eps) \, |x-\bar{x}| \, < \, |\phi(x) - \phi(\bar{x})| \, < \, (1-\eps)^{-1} \, |x-\bar{x}|
\]
for all $x, \bar{x} \in \R^n$.
\end{lemma}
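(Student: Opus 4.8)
The plan is to reduce the statement to an elementary estimate on the difference $\phi(x)-\phi(\bar x)$. Writing $\phi(x)-\phi(\bar x) = \bigl(x'-\bar x',\, v(x)-v(\bar x)\bigr)$, I would express the $n$-th component via the fundamental theorem of calculus along the straight segment $\gamma(\tau) = \bar x + \tau(x-\bar x)$, $\tau\in[0,1]$, namely
\[
v(x) - v(\bar x) \, = \, \int_0^1 \nabla_{\!x} v\bigl(\gamma(\tau)\bigr)\cdot (x-\bar x) \, d\tau \, = \, (x_n - \bar x_n) + \int_0^1 \bigl(\nabla_{\!x}v(\gamma(\tau)) - e_n\bigr)\cdot(x-\bar x)\, d\tau \, .
\]
The point is that $\phi(x)-\phi(\bar x) = (x-\bar x) + \bigl(0,\dots,0, r\bigr)$ where the scalar remainder $r = \int_0^1 \bigl(\nabla_{\!x}v(\gamma(\tau)) - e_n\bigr)\cdot(x-\bar x)\, d\tau$ satisfies $|r| \le \eps\,|x-\bar x|$ by the hypothesis $|\nabla_{\!x}v - e_n| < \eps$ and Cauchy--Schwarz inside the integral.

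Next I would invoke the triangle inequality in $\R^n$: since $\phi(x)-\phi(\bar x)$ differs from $x-\bar x$ by a vector of length at most $\eps\,|x-\bar x|$, we get immediately
\[
(1-\eps)\,|x-\bar x| \, \le \, |x-\bar x| - |r| \, \le \, |\phi(x)-\phi(\bar x)| \, \le \, |x-\bar x| + |r| \, \le \, (1+\eps)\,|x-\bar x| \, .
\]
This already gives the lower bound claimed in the lemma. For the upper bound one wants $(1-\eps)^{-1}$ rather than $(1+\eps)$; since $(1+\eps) \le (1-\eps)^{-1}$ for $\eps\in[0,1)$, the stated (slightly weaker) upper bound follows a fortiori, so there is nothing more to do. If one instead wanted a genuinely two-sided statement with reciprocal constants, one would apply the same argument to the inverse map (which exists and has gradient close to $e_n$ by the implicit function theorem / a Neumann series argument on $D\phi = \mathrm{Id} + (\nabla v - e_n)\otimes e_n$), but that refinement is not needed here.

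There is essentially no serious obstacle: the only mild subtlety is making sure the line segment $\gamma$ stays in the domain so that $\nabla_{\!x}v$ may be evaluated along it — this is automatic since $v$ is assumed defined on all of $\R^n$ and the segment is contained in $\R^n$. One should also note that the hypothesis as stated presupposes $v$ is (at least) differentiable with $\nabla_{\!x}v$ defined pointwise; if $v$ is merely Lipschitz with $\mathrm{ess\,sup}\,|\nabla v - e_n| < \eps$, the same computation goes through with $\nabla v$ defined a.e.\ and the integral along $\gamma$ still valid for a.e.\ segment, or one argues by approximation. The quasi-isometry constants then transfer directly, which is exactly what is needed to compare the metric $d$ on $H$ with its pushforward under the von Mises change of coordinates in the proof of Theorem \ref{uniqueness}.
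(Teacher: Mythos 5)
Your proof is correct and follows essentially the same route as the paper: both reduce to the single estimate $|\phi(x)-\phi(\bar x)-(x-\bar x)| < \eps\,|x-\bar x|$ (you via the integral form of the fundamental theorem of calculus, the paper via the mean value theorem) and then conclude by the triangle inequality, with the observation that $1+\eps \le (1-\eps)^{-1}$. If anything your write-up is a bit cleaner, since the paper's displayed inequality reads as a ``vertical-direction'' mean value bound on $|v(x)-v(\bar x)|$ itself when it is really a bound on the deviation $v(x)-v(\bar x)-(x_n-\bar x_n)$ along the full segment from $\bar x$ to $x$; your formulation makes this explicit and also records the slightly sharper upper constant $1+\eps$.
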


\begin{proof}
We assume that $x_n > \bar{x}_n$ without loss of generality. Then, applying the mean value theorem in the vertical direction and invoking the assumption on $v$, there exists a number $\bar{x}_n < z < x_n$ such that
\[
|v(x)-v(\bar{x})| \, = \, |\partial_{x_n} v(z)(x_n-\bar{x}_n)| \, < \, \eps \, (x_n-\bar{x}_n) \, .
\]
This implies
\[
|\phi(x) - \phi(\bar{x}) - (x-\bar{x})| \, < \, \frac{\eps}{1-\eps} \; \min \bigl\{ |\phi(x)-\phi(\bar{x})| \, , |x-\bar{x}| \bigr\}
\]
and hence the assertion.
\end{proof}

\begin{proof}[Proof of Theorem \ref{uniqueness}]
Assume that there exist $\delta \in (0,1)$ and $C>1$ such that $v: I \times \hs \to \R$ satisfies
\begin{equation}
\label{perturbed stationary solution bounded from above and below}
\tag{$\ast$}
\delta \, \leq \, |\nabla_{\!x} v(t,x)| \, \leq \, C \, .
\end{equation}
Then, Lemma \ref{almost isometry} allows us to reparametrize the graph of $v=x_n+u$ globally via $\phi$, reversing the local transformation applied in section \ref{transformation}. Now using $\tilde{h}=x_n$ as the new independent variable we obtain
\[
\nabla_{\!y} \tilde{h} \, = \, - v_n^{-1}
	\begin{pmatrix}
	\nabla'_{\!x} v \\
	-1 
	\end{pmatrix}
\]
and thus
\[
0 \, < \, \frac{\delta+1}{2C} \, \leq \, |\nabla_{\!y} \tilde{h}| \, \leq \, \frac{C+1}{\delta} \, < \, \infty \, .
\]

We will prove the theorem in two steps. First, we show that a solution of (\ref{perturbation equation}) yields a weak solution of the thin-film equation in the sense of definition (\ref{weak solution of tfe}). Second, we prove uniqueness of this solution by imposing additional conditions on $h$ in terms of the transformed cylinders $Q_R(x)$. \\

\textit{Existence:} Let $\varphi \in C_c^{\infty}(I \times \R^n)$ be an arbitrary test function. Putting $\tilde{h}=\sqrt{h}$, we show that for all $s \in I$,
\[
\int_{\R^n} \partial_s \tilde{h}^2 \varphi \, dy \, = \, \int_{\R^n} \tilde{h}^2 \nabla_{\!y}\Delta_y \tilde{h}^2 \cdot \nabla_{\!y}\varphi \, dy \, .
\]
Under a change of coordinates $(s,y) \mapsto (t,x)$ as in section \ref{transformation}, the integral on the left hand side transforms to
\[
-2 \int_H x_n \; \frac{\partial_t u}{v_n} \; \varphi \; \frac{\partial y_n}{\partial x_n} \; dx \, = \, -2 \int_H \partial_t u \varphi \, d\mu_1 \, ,
\]
where $v_n = \partial_{x_n} v = 1 + \partial_{x_n} u$. For the second integral, we proceed as in the proof of Lemma \ref{nonlinearity} to get
\[
2 \int_H \bigl[
\begin{pmatrix}
v_n^{-1} \, \nabla'_{\!x}\\
\partial_{x_n}
\end{pmatrix}
(x_n^{\,3} \, \Delta_x u) + 2 \, x_n^{\,2} \,
\begin{pmatrix}
v_n^{-1} \, \nabla'_{\!x} \\
\partial_{x_n}
\end{pmatrix}
\partial_{x_n} u - 2 \, x_n^{\,2} \, e_n \, \Delta_x u + R(u) \bigr] \, \nabla_{\!y}\varphi \, v_n \, dx \, .
\]
Next we employ the $\star$-notation, as introduced in section \ref{linearization}, to rewrite the remainder in the form
\[
R(u) \, = \, x_n^{\,2} \, \tilde{f}_2(\nabla_{\!x}u) \star \nabla_{\!x}u \star D_{\!x}^2u + x_n^{\,3} \, \bigl( \tilde{f}_3^1(\nabla_{\!x}u) \star \nabla_{\!x}u \star D_{\!x}^3u + \tilde{f}_3^2(\nabla_{\!x}u) \star P_2(D_{\!x}^2u) \bigr) \, .
\]
Again, the functions $\tilde{f}_2$ and $\tilde{f}_3^k$ contain factors of the form $v_n^{-m}$ for some $2 \leq m \leq 5$. Integration by parts leads to
\[
2 \int_H x_n \, (\partial_t u + L u - f[u]) \, \varphi \, dx \, = \, 0
\]
and $2 x_n \varphi$ is an admissible test function. To see this, we use the formulas from section \ref{local coords} to calculate
\[
\nabla_{\!y} \bigl[
\begin{pmatrix}
\nabla'_{\!x}\\
v_n \, \partial_{x_n}
\end{pmatrix}
(x_n^{\,3} \, \Delta_x u) + 2 \, x_n^{\,2} \,
\begin{pmatrix}
\nabla'_{\!x} \\
v_n \, \partial_{x_n}
\end{pmatrix}
\partial_{x_n} u - 2 \, x_n^{\,2} \, v_n \, e_n \, \Delta_x u + v_n \, R(u) \bigr] 
\]
\[
= \, \Delta_x (x_n^{\,3} \, \Delta_x u) - 4 \, x_n \, \Delta'_x u - f[u] \, .
\]
Reversing the transformation yields the existence of a solution $h$ of  $\partial_s h + \nabla_{\!y} \cdot (h \, \nabla_{\!y}\Delta_y h)=0$ on $P(h)$. Finally, extending $h$ by $0$ outside of $spt \, h$ and applying integration by parts, we conclude that
\[
\int_I \int_{\R^n} h \partial_s \varphi + h \, \nabla\Delta h \cdot \nabla\varphi \, dy ds \, = \, - \int_I \int_{\R^n} \bigl( \partial_s h + \nabla\!\cdot\! (h \, \nabla\Delta h) \bigr) \, \varphi \, dy ds \, = \, 0 \, .
\]
The boundary terms vanish since $h$ vanishes on $\partial P(h)$. \\

\textit{Uniqueness:} Given $g_v$ with $|\nabla_{\!x} g_v - e_n| < \eps$, then by Theorem \ref{well-posedness of the nonlinear problem} there exists a unique solution $u^*$ of (\ref{perturbation equation}) satisfying the initial condition $u^*(0)=g_v$. Moreover, with $v^*=x_n + u^*$ as above, we have
\[
\| v^*-x_n \|_{X_p} \, \lesssim \, \eps \, .
\]
In particular, $|\nabla_{\!x}v^* - e_n| \lesssim \eps$ which implies $|\nabla_{\!y} \tilde{h} - e_n| \lesssim \eps$ after the transformation $(t,x) \mapsto (s,y)$, cf.\ (\ref{perturbed stationary solution bounded from above and below}). Under this transformation applied to cylinders of the form $Q_R(x) = (\frac{R^4}{2},R^4] \times B_R(x)$ we get
\[
Q_R(x) \, \sim \, (\frac{R^4}{2}, R^4] \times B_R(y) \, = \, Q_R(y)
\]
which follows from Lemma \ref{almost isometry}. Now let $(j, l, \alpha) \in \mathcal{CZ}$, for example take $(j,l,|\alpha|)=(0,0,2)$. Then
\[
|Q_R(x)|^{-\frac{1}{p}} R (R+\sqrt{x_n}) \, \| D_{\!x}^2 v \|_{L^p(Q_R(x))} \, \sim \, |Q_R(y)|^{-\frac{1}{p}} R (R+\sqrt{\tilde{h}}) \, \| D_{\!y}^2 \tilde{h} \|_{L^p(Q_R(y))} \, ,
\]
with similar transforms for the other combinations of $j,l$ and $|\alpha|$. The supremum is now taken over all $0<R< \sqrt[4]{T}$ and all $y \in P_s(\tilde{h}) = \{ y \in \R^n \mid \tilde{h}(s,y) > 0 \}$, $s \in I$. Also note that $\tilde{h}$ is controlled by
\[
(1-\tilde{\eps}) \, \tilde{h}(s,y) \; < \; dist\bigl( y,\R^n \setminus spt \, \tilde{h}(s) \bigr) \; < \; (1-\tilde{\eps})^{-1} \, \tilde{h}(s,y)
\]
which follows from a transformation of the statement in Lemma \ref{almost isometry}. \\
All these calculations show that $v^*$ generates a solution $\tilde{h}_1$ via $(t,x) \mapsto (s,y)$ which satisfies the inequality
\[
\sup_{P(\tilde{h})} \, |\nabla_{\!y} \tilde{h}_1 - e_n| + \bigl[ \tilde{h}_1 \bigr]_{X_p^1} \, \lesssim \, \eps \, .
\]

Let $\tilde{h}_2$ be another weak solution. Then, reversing the transformation, we obtain a second solution, say $v^{**}$, of the transformed problem. Thus, by uniqueness of such a solution, $\tilde{h}_1 = \tilde{h}_2$ is a unique solution of
\[
\partial_s \tilde{h}^2 + \nabla_{\!y}\!\cdot\! \bigl(\tilde{h}^2 \; \nabla_{\!y}\Delta_y \tilde{h}^2\bigr) = 0 \, .
\]
Finally, we substitute back for $\tilde{h}=\sqrt{h}$ to see that the initial value problem for the equation (\ref{tfe}) has a unique weak solution, denoted $h^*$. Moreover, since the level set of $h^*$ at height $\lambda \geq 0$ is given by
\[
N_\lambda(h^*) \, = \, graph \, v^*(t,x',\sqrt{\lambda}) \, ,
\]
the analyticity of $N_\lambda(h^*)$ follows directly from Proposition \ref{analyticity in t and x'}. The proof is complete.
\end{proof}

%%%%%%%%%%%%%%%%%%%%%%%%%%%%%%%%%%%%%%%%%%%%%%%%%%%%%%%%%%%%

\section{The proofs} % the missing proofs
\label{proofs}
In this section we provide the proofs of section \ref{linear model case} and the proof of Lemma \ref{Y-norm vs X-norm}. We begin with the global energy estimate.

\begin{proof}[Proof of Proposition \ref{energy estimate}]
Similar as for Proposition \ref{energy identity}, we use a regularized version of $\chi_I \partial_t u$ as test function. This gives
\[
\int_I \| \partial_t u \|_{L^2(\mu_1)} \, d\mathcal{L} \, \leq \, \int_I \| f \|_{L^2(\mu_1)} \, d\mathcal{L} \, .
\]
Treating $t$ as a parameter, it is therefore sufficient to consider the elliptic equation $Lu=f$ on $\hs$. To be more precise, in the remaining part of the proof we assume that $u$ satisfies the integral identity
\[
\int_H \Delta u \Delta \varphi \, d\mu_3 + 4 \int_H \nabla'u \cdot \nabla' \varphi \, d\mu_1 \, = \, \int_H f \varphi \, d\mu_1
\]
for all $\varphi \in W^{2,2}(H, \mu_1, \mu_1, \mu_3)$. Now, formally one can prove the energy estimate by testing the elliptic equation with the operator $Lu$ itself. A rigorous justification of this result requires a careful treatment of certain commutators. However, we take a different approach exploiting the fact that the operator $L$ can be factorized as
\[
L \, = \; L_1 L_1 \, ,
\]
where
\[
L_\sigma u \, = \, - x_n^{-\sigma} \, \nabla \!\cdot\! ( x_n^{\,\sigma+1} \, \nabla u) \, , \quad \sigma > -1 \, ,
\]
is the related second order degenerate equation (for a discussion of this equation see \cite{K99}). We begin to study the equation $L_1u=w$ and perform Fourier transformation in the tangential directions $x_1, \dots, x_{n-1}$ to get
\[
x_n \partial_{x_n}^2 \hat{u} + 2 \, \partial_{x_n} \hat{u} - x_n |\xi|^2 \hat{u} \, = \, - \, \widehat{w} \, .
\]
Taking the Fourier variable $\xi \in \R^{n-1}$ as a parameter and putting $z=|\xi| x_n$, this becomes an ODE of the form
\[
\hat{L} \hat{u} \, = \, z \, \partial_z^2 \hat{u} + 2 \, \partial_z \hat{u} - z \hat{u} \, = \, - \, |\xi|^{-1} \widehat{w} \, ,
\]
with $\hat{u}=\hat{u}(\xi,z)$ and $\widehat{w}=\widehat{w}(\xi,z)$. Renaming $\hat{L}=L$, $\hat{u} = u$ and $- |\xi|^{-1}\widehat{w}=w$, we obtain
\begin{equation}
\label{Fourier ODE of second order}
\tag{$\ast$}
L u \, = \, z \, \partial_z^2 u + 2 \, \partial_z u - z u \, = \, w \, ,
\end{equation}
an equation of one independent variable $z \in \R_+$. Now if we substitute $u=z^{-\frac{1}{2}} \, v$ into (\ref{Fourier ODE of second order}), we recover the modified Bessel differential equation $z^2 \partial_z^2 + z \partial_z v - (z^2+\nu^2)v = 0$ of order $\nu=\frac{1}{2}$ for which
\[
I_\nu(z) \, = \, \sum_{j=0}^{\infty} \, \frac{1}{j! \, \Gamma(j+\nu+1)} \, \Bigl( \frac{1}{2} \, z \Bigr)^{2j+\nu}
\]
and
\[
K_\nu(z) \, = \, \frac{1}{2} \, \pi \, \frac{I_{-\nu}(z) - I_\nu(z)}{\sin(\nu\pi)} \, , \quad \nu \notin \Z \, ,
\]
are a fundamental system. Hence a fundamental system for (\ref{Fourier ODE of second order}) is given by $\psi_1(z) = z^{-\frac{1}{2}} \, I_\frac{1}{2}(z)$ and $\psi_2(z) = z^{-\frac{1}{2}} \, K_\frac{1}{2}(z)$. The Wronskian is $\mathcal{W}\bigl( \psi_1(z), \psi_2(z) \bigr) = z^{-2}$ and the operator $T: w \mapsto z^j \, u$ has the kernel
\[
k(z,y) \; = \; \begin{cases}
   - \, y \, \psi_1(z) \, \psi_2(y)   & \text{if} \; z < y \\
   \phantom{-} \, y \, \psi_1(y) \, \psi_2(z)   & \text{if} \; z > y \, .
\end{cases}
\]
From standard ODE theory we know that any solution of (\ref{Fourier ODE of second order}) can be written
\[
z^j \, u(z) \, = \, \int_0^{\infty} z^j \, k(z,y) \, w(y) \, dy
\]
for almost every $z \in \R_+$. Now we check that the kernel conditions
\[
i) \; \sup_{z \in \R_+} \int_0^{\infty} z^j \, |k(z,y)| \, dy \, < \, \infty \qquad \text{and} \qquad 
ii) \; \sup_{y \in \R_+} \int_0^{\infty} z^j \, |k(z,y)| \, \frac{z}{y} \, dz \, < \, \infty
\]
are satisfied provided that $j \in [0,1]$. For this we use $\psi_1(z) \sim 1$ and $\psi_2(z) \sim z^{-1}$ for small $z$, while for large $z$, $\psi_1(z) \sim z^{-1} e^z$ and $\psi_2(z) \sim z^{-1} e^{-z}$. This follows from the corresponding asymptotics of the modified Bessel functions. \\
Then (i) implies that $T: L^{\infty}\bigl( \R_+ \bigr) \to L^{\infty}\bigl( \R_+ \bigr)$, and by (ii) it follows that $T$ maps $L^1\bigl( \R_+, \mu_1 \bigr)$ into itself. Using the Marcinkiewicz interpolation theorem applied to the operator $x_n \, T \, x_n^{-1}$, we thus have
\[
\| u \|_{L^2(\R_+, \mu_{2j+1} )} \, \leq \, c \, \| w \|_{L^2(\R_+, \mu_1)} \, .
\]
In particular, this allows us to control the term $z u$ (the case $j=1$) and therefore, with $v = \partial_z u$, equation (\ref{Fourier ODE of second order}) reduces to
\[
z \, \partial_z v + 2 \, v \, = \, f + z \, u \, =: \, \widetilde{w} \, .
\]
By
\[
\widetilde{k}(z,y) \, = \, y \,
\begin{cases}
   z^{-2}   & \text{if} \; z>y \\
   0  & \text{otherwise}
\end{cases}
\]
the corresponding integral kernel is defined. Now consider the operator $z^\delta \widetilde{w} \mapsto z^\delta v$ for some $\delta > 0$. Since
\[   
\sup_{z \in \R_+} \int_0^{\infty} \Bigl( \frac{z}{y} \Bigr)^\delta |\widetilde{k}(z,y)| \, dy \, = \, \sup_{z \in \R_+} \, z^{\delta-2} \int_0^z y^{1-\delta} \, dy \, = \, \frac{1}{2-\delta} \; , 
\]
it follows
\[
\|  z^\delta \, v\|_{L^{\infty}(\R_+)} \, \lesssim \, \| z^\delta \, \widetilde{w} \|_{L^{\infty}(\R_+)}
\]
for $\delta < 2$. On the $L^1$-side of the estimate we obtain
\[   
\sup_{y \in \R_+} \int_0^{\infty} \Bigl( \frac{z}{y} \Bigr)^\delta |\widetilde{k}(z,y)| \, dz \, = \, \sup_{y \in \R_+} y^{1-\delta} \int_y^{\infty} z^{\delta-2} \, dz \, = \, - \, \frac{1}{\delta-1} \, .
\]
If $\delta < 1$, then
\[
\| z^\delta \, v \|_{L^1(\R_+)} \, \lesssim \, \| z^\delta \, \widetilde{w} \|_{L^1(\R_+)} \, .
\]
Choosing $\delta=\frac{1}{2}$\,, an interpolation between $L^1$ and $L^{\infty}$ gives
\[
\| \partial_z u \|_{L^2(\R_+, \mu_1)} \, = \, \| v \|_{L^2(\R_+, \mu_1)} \, \lesssim \, \| \widetilde{w} \|_{L^2(\R_+, \mu_1)} \, = \, \| w+ z \, u \|_{L^2(\R_+, \mu_1)} \, \lesssim \, \| w \|_{L^2(\R_+, \mu_1)} \, ,
\]
and thus
\[
\| \partial_z^2 u \|_{L^2(\R_+, \mu_3)} \, = \, \| w - 2 \partial_z u + z \, u \|_{L^2(\R_+, \mu_1)} \, \lesssim \, \| w \|_{L^2(\R_+, \mu_1)}
\]
by (\ref{Fourier ODE of second order}). Now recall that $u = \hat{u}$ and $|\xi| w = - \widehat{w}$. A retransformation from $z$ to $x_n$ and integration in $\xi \in \R^{n-1}$ yield
\[
\| |\xi| \, \hat{u} \|_{L^2(H,\mu_1)} + \| |\xi|^2 \, \hat{u} \|_{L^2(H,\mu_3)} + \| \partial_{x_n} \hat{u} \|_{L^2(H,\mu_1)} + \| \partial_{x_n}^2 \hat{u} \|_{L^2(H,\mu_3)} \, \lesssim \, \| \widehat{w} \|_{L^2(H,\mu_1)} \, .
\]
By another application of the estimate, we get
\[
\| \widehat{w} \|_{L^2(H,\mu_1)} \, \lesssim \, \| |\xi|^{-1} \, \hat{f} \|_{L^2(H,\mu_1)}
\]
which is possible since $\hat{L} \widehat{w} = |\xi|^{-1} \widehat{f}$. Now an inverse Fourier transformation converts the inequality into
\[
\| \nabla'\nabla' u \|_{L^2(H,\mu_1)} + \| \nabla' \Delta' u \|_{L^2(H,\mu_3)} + \| \nabla' \partial_{x_n} u \|_{L^2(H,\mu_1)} + \| \nabla' \partial_{x_n}^2 u \|_{L^2(H,\mu_3)} \, \lesssim \, \| f \|_{L^2(H,\mu_1)} \, .
\]
This implies
\[
\| \nabla' \nabla u \|_{L^2(H, \mu_1)} + \| \nabla' D_{\!x}^2 u \|_{L^2(H, \mu_3)} \, \lesssim \, \| f \|_{L^2(H, \mu_1)}
\]
by means of the auxiliary formula (\ref{auxiliary formula}). \\

Higher order derivatives can be obtained in a similar manner: Differentiating (\ref{Fourier ODE of second order}) leads to the modified Bessel equation of order $\nu=1$. Following the same line of argument, we can verify the kernel estimates (i)--(ii) with $k$ replaced by $\partial_z k$, and thus whenever $u$ satisfies the equation $Lu=w$, we get
\[
\| \partial_z u \|_{L^2(\R_+, \mu_3)} \, \lesssim \; \| w \|_{L^2(\R_+, \mu_1)} \, .
\]
This amounts to
\[
\|  \partial_{x_n} \Delta' u \|_{L^2(H, \mu_3)} \, \lesssim \, \| f \|_{L^2(H, \mu_1)}
\]
after renaming back the involved variables and an inverse Fourier transformation. Repeating this procedure we obtain
\[
z \, \partial_z^2(\partial_z^2 u) + 4 \, \partial_z(\partial_z^2 u) - z \, \partial_z^2 u \, = \,  \partial_z^2 w + 2 \, \partial_z u
\]
and hence
\[
\| \nabla \partial_{x_n}^2 u \|_{L^2(H, \mu_3)} + \| D_{\!x}^2 \partial_{x_n}^2 u \|_{L^2(H, \mu_5)} \, \lesssim \, \| f \|_{L^2(H, \mu_1)} \, .
\]
Finally, we claim that
\begin{equation}
\label{fourth tangential derivatives on the Fourier level}
\tag{$\ast\ast$}
\| u \|_{L^2(\R_+, \mu_5)} \, \lesssim \, \| f \|_{L^2(\R_+, \mu_1)} \, .
\end{equation}
Then
\[
\| D_{\!x'}^4 u \|_{L^2(H, \mu_5)} \, \lesssim \, \| f \|_{L^2(H, \mu_1)}
\]
and Proposition \ref{energy estimate} follows by a combination of the above estimates . \\

To prove (\ref{fourth tangential derivatives on the Fourier level}) we consider the equation $z \, \partial_z^2 u + 4 \, \partial_z u - z u = w + 2 \, \partial_z u$ which follows directly from (\ref{Fourier ODE of second order}) by adding $2 \, \partial_z u$ to both sides of the equation. The left hand side transforms into a Bessel equation of order $\nu=\frac{3}{2}$ which implies (\ref{fourth tangential derivatives on the Fourier level}), just as above.
\end{proof}

Next we want to prove local versions of the energy estimate obtained in Proposition \ref{energy estimate}. To this end, we construct cut-off functions in terms of the intrinsic metric $d$. A key insight that arises from (\ref{ball topology}) is the following: If an intrinsic ball is located ``near'' the boundary, then we have $B_R(x) \sim B_{R^2}^{eu}(x)$,\footnote{This relation is to be understood as follows: There exist $0 < c \leq 1 \leq C$ such that $B_{c R^2}^{eu}(x) \subset B_R(x) \subset B_{C R^2}^{eu}(x)$.} while ``far away'' from there we have $B_R(x) \sim B_{R \sqrt{x_n}}^{eu}(x)$. This particular behavior suggests to consider different treatments depending on the ball's position relative to $\partial H$. We start with the derivation of an energy estimate in the latter case, i.e.\ for solutions defined in the parabolic cylinder
\[
Q_R(t,x) \, = \, (t-R^4, t] \times B_R(x)
\]
with $R \ll d(x,\partial H)$.

\begin{lemma}
\label{local energy estimate in 1}
Let $l$ be any nonnegative integer and $\alpha$ any multi-index. If $u$ is a an energy solution of $\partial_t u + Lu = 0$ on $Q_R(0, e_n)$, with $R \ll 1$ and $e_n = (0,\dots,0,1) \in H$, then there exists a small $\delta > 0$ such that
\[
\| \partial_t^l \partial_x^\alpha u \|_{L^2(Q_{\delta R}(0, \, e_n))} \, \leq \, c \, R^{-4l - |\alpha|} \, \| u \|_{L^2(Q_R(0, e_n))}
\]
for some positive constant $c=c(n,l,\alpha)$.
\end{lemma}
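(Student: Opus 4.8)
The plan is to reduce everything to an interior regularity estimate at unit scale. The starting observation is that, because $R \ll 1$, the cylinder $Q_R(0,e_n)$ stays uniformly away from $\partial H$: by the Euclidean comparison (\ref{ball topology}) one has $B_R(e_n)\subset B^{eu}_{2R(R+2)}(e_n)\subset B^{eu}_{6R}(e_n)$, so $x_n\in(\tfrac12,2)$ throughout $Q_R(0,e_n)$. Hence the weights $x_n^{-1}$ and $x_n^{\,3}$ occurring in $L=x_n^{-1}\Delta(x_n^{\,3}\Delta\cdot)-4\Delta'$, together with all their derivatives, are bounded above and below by absolute constants, and $\partial_t u+Lu=0$ is a genuinely uniformly parabolic fourth-order divergence-form equation with smooth coefficients and no boundary in the picture. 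The lemma is therefore an instance of interior parabolic regularity, and the only thing that needs bookkeeping is the power of $R$.

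First I would rescale to unit scale by setting $v(\tau,z)=u(R^{4}\tau,\,e_n+Rz)$. A direct computation turns $\partial_t u+Lu=0$ into
\[
\partial_\tau v\,+\,(1+Rz_n)^{-1}\,\Delta_z\bigl((1+Rz_n)^{\,3}\,\Delta_z v\bigr)\,-\,4R^{2}\,\Delta'_z v\,=\,0\,,
\]
an equation of the same type whose coefficient $1+Rz_n$ (the only one present, and it depends on $z_n$ alone) is, for $R\le R_0$ with $R_0$ a fixed small number, uniformly elliptic and uniformly smooth in $R$; in fact the operator converges to the constant-coefficient biharmonic operator $\Delta_z^{2}$ as $R\to0$, so for small $R$ it is a small perturbation of the biharmonic heat operator studied in \cite{KL12}. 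Using (\ref{ball topology}) once more, the rescaled copy of $Q_R(0,e_n)$ contains a fixed Euclidean parabolic cylinder $\tilde Q$ (uniformly in $R\le R_0$), while the rescaled copy of $Q_{\delta R}(0,e_n)$ is contained in a cylinder of Euclidean radius $\lesssim\delta$ and height $\lesssim\delta^{4}$, hence inside a fixed $\tilde Q'\Subset\tilde Q$ once $\delta$ is chosen small in terms of $c_d$ (equivalently, of $n$). It thus suffices to prove, uniformly for $R\le R_0$, the fixed-scale bound $\|\partial_\tau^{l}\partial_z^{\alpha}v\|_{L^{2}(\tilde Q')}\le c(n,l,\alpha)\,\|v\|_{L^{2}(\tilde Q)}$, and then undo the rescaling: since $\partial_t^{l}\partial_x^{\alpha}u=R^{-4l-|\alpha|}\,\partial_\tau^{l}\partial_z^{\alpha}v$ and the Jacobian $R^{\,n+4}$ of the change of variables is common to both $L^{2}$-norms, this yields exactly the factor $R^{-4l-|\alpha|}$ claimed.

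The fixed-scale estimate I would get by the standard Caccioppoli bootstrap. One tests the weak formulation of $\partial_\tau v+L^{(R)}v=0$ with $\eta^{4}v$ for a cutoff $\eta$ that is $1$ on $\tilde Q'$, supported in $\tilde Q$, and vanishing near the bottom and the lateral boundary of $\tilde Q$ --- regularizing in $\tau$ exactly as in the proof of Proposition \ref{energy identity} to make $\partial_\tau v$ a legitimate multiplier, and absorbing the top-order term --- to obtain $\sup_\tau\|v\|_{L^{2}}+\|D_z^{2}v\|_{L^{2}(\tilde Q')}\lesssim\|v\|_{L^{2}(\tilde Q)}$, where $|D_z^{2}v|^{2}$ is recovered from $(\Delta_z v)^{2}$ and $|\nabla'_z v|^{2}$ by the local analogue of (\ref{auxiliary formula}). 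Since $L^{(R)}$ has coefficients independent of the tangential variables, every tangential derivative $\partial_{z'}^{\beta'}v$ again solves $\partial_\tau+L^{(R)}=0$, so iterating the Caccioppoli step in $z'$ produces all tangential derivatives; the remaining $\partial_{z_n}$- and $\partial_\tau$-derivatives are generated by differentiating the equation in $z_n$ (the coefficient derivatives only contribute lower-order inhomogeneities, handled by the inhomogeneous version of the same estimate) and by reading $\partial_\tau v=-L^{(R)}v$ off the equation once enough spatial regularity is available. Each step costs a slightly larger cylinder and a finite, $R$-independent constant, and after finitely many steps one reaches any prescribed $(l,\alpha)$.

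The main obstacle is precisely organizing this bootstrap so that every step genuinely gains two spatial orders (or one temporal order) while keeping all constants uniform in $R$; once one is willing to invoke the interior regularity theory for uniformly parabolic fourth-order equations --- equivalently, a small perturbation of the biharmonic heat equation, cf.\ \cite{KL12} --- the rest is bookkeeping, and as a by-product one also obtains the local smoothness of energy solutions used freely in Section \ref{pointwise estimates}.
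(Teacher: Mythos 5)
Your proof is correct and follows essentially the same argument as the paper: since $x_n\sim 1$ on $Q_R(0,e_n)$, the operator is uniformly parabolic of fourth order there, and local regularity is obtained by a Caccioppoli-type cutoff energy estimate, iterated first in the tangential directions (which commute with $L$), then in $x_n$ by differentiating the equation and treating the resulting lower-order commutator as an inhomogeneity, and finally in $t$ by solving the equation for $\partial_t u$. The only cosmetic difference is that you rescale to unit scale before running the bootstrap, whereas the paper works directly at scale $R$ with a cutoff satisfying $|\partial_t^l\partial_x^\alpha\eta|\lesssim R^{-4l-|\alpha|}$ (supplemented by a Poincar\'e inequality to get the sharp power of $R$ on $\nabla u$); the two are equivalent under the change of variables you write down.
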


\begin{proof}
First we choose a suitable cut-off function $\eta \in C_c^{\infty}\bigl((-R^4,R^4) \times B_R(e_n)\bigr)$ with $\eta \equiv 1$ on $Q_{\delta R}(0, e_n)$ for a sufficiently small $\delta \in (0,1)$. Taking a product ansatz we may additionally assume that
\[
|\partial_t^l \partial_x^\alpha \eta| \, \lesssim \, R^{-4l-|\alpha|} \, .
\]
Now suppose $u$ is an energy solution of $\partial_t u + Lu = f$ on $Q_R(0,e_n)$ with $f \in L^2\bigl( Q_R(0,e_n), \mu_1 \bigr)$, then
\[
(\partial_t + L)(\eta u) \, = \, \eta \, f + (\partial_t \eta + L\eta)u + \omega \qquad \text{on} \quad (-R^4,0) \times \hs
\]
where
\[
\omega \, = \, - 8 \, \nabla' \eta\cdot\nabla' u + 2 \, x_n^{\,2} \, \Delta\eta ,\Delta u + 2 x_n^{-1} \bigl( \nabla(x_n^{\,3} \, \Delta\eta) \cdot \nabla u + \nabla\eta\cdot\nabla(x_n^{\,3} \, \Delta u) + \Delta( x_n^{\,3} \, \nabla\eta \cdot \nabla u \bigr) \, .
\]
To put it concisely, the local solution $u$ becomes a global solution by multiplication by $\eta$. \\
Finally, we observe that $x_n \sim  1$ in $B_R(e_n)$, and hence we can replace any weighted measure by the Lebesgue measure and vice versa. This also reflects the property that locally $\partial_t u + L u = f$ is a uniformly parabolic equation of fourth order. \\

Let $Q_R = Q_R(0,e_n)$. Using the integration by parts formula and the Cauchy-Schwarz inequality, we find
\[
\| \nabla (\eta u) \|_{L^2(Q_R)} + \| D_{\!x}^2 (\eta u) \|_{L^2(Q_R)} \, \lesssim \, R^{-2} \, \| u \|_{L^2(Q_R)} + R^2 \, \| f \|_{L^2(Q_R)}
\]
by means the energy identity in Proposition \ref{energy identity} combined with the auxiliary calculation (\ref{auxiliary formula}). \\
Next, we invoke the Poincar\'e inequality applied to the function $\nabla(\eta u) \in W^{1,2}(B_R)$ to conclude that
\[
\| \nabla(\eta u) \|_{L^2(B_R)} \, \leq \, c(n) R \, \| D_{\!x}^2 (\eta u) \|_{L^2(B_R)} \, .
\]
This is possible because $\eta(t)=0$ on $\partial B_R$ for all $t \in (-R^4,0]$. Altogether this amounts to the estimate
\begin{equation}
\label{1st order spatial energy estimate}
\tag{$\ast$}
R \, \| \nabla u \|_{L^2(Q_{\delta R})} + R^2 \, \| D_{\!x}^2 u \|_{L^2(Q_{\delta R})} \, \lesssim \, \| u \|_{L^2(Q_R)} + R^4 \, \| f \|_{L^2(Q_R)} \, ,
\end{equation}
since also $\eta \equiv 1$ on the smaller cylinder $Q_{\delta R}$. Tangential derivatives commute with the operator $\partial_t + L$. Thus boundedness of all tangential derivatives follows from boundedness of the first order derivatives given in (\ref{1st order spatial energy estimate}) by iteration. \\

To control vertical derivatives as well, we first set $u^{(k)} = \partial_{x_n}^k u$ with $k \in \N_0$. It satisfies the equation
\[
\partial_t u^{(k)} + x_n^{-k-1} \Delta (x_n^{\,k+3} \, \Delta u^{(k)}) \, = \, \tilde{f}_k
\]
in the sense of distributions, where $\tilde{f}_k = \partial_{x_n}^k f - 2k x_n \Delta'\Delta u^{(k-1)} - k(k-1) \Delta' \Delta u^{(k-2)}$. Also note that $\tilde{f}_0=f$. We can do the same calculations for the operator $L_k = x_n^{-k-1} \Delta (x_n^{\,k+3} \, \Delta)$ to obtain analogues of the results established in the Propositions \ref{energy identity} and \ref{energy estimate}. Assuming $f=0$, we can now prove that
\[
R \, \| \nabla u^{(k)} \|_{L^2(Q_{\delta R})} + R^4 \, \| D_{\!x}^2 u^{(k)} \|_{W^{2,2}(Q_{\delta R})} \, \lesssim \, R^{-k} \, \| u \|_{L^2(Q_R)}
\]
by induction over $k = 0, \dots, \alpha_n$. The regularity of $u^{(k)}$ needed in each of the iteration steps follows from the iterated energy identity and the iterated energy estimate in a similar way as (\ref{1st order spatial energy estimate}) follows from the corresponding statements for $k=0$. \\

Finally, solving inductively $\partial_t^j (\partial_t u + L u) = 0$ for $\partial_t^{j+1}u$ and using the bounds for spatial derivatives we get
\[
\| \partial_t^l u \|_{L^2(Q_{\delta R})} \, \lesssim \, \dots \, \lesssim \, R^{-4l} \, \| u \|_{L^2(Q_R)} \, ,
\]
and the lemma follows with a (possibly smaller) scaling factor $\delta < 1$.
\end{proof}

The situation at the boundary is covered by the following lemma.

\begin{lemma}
\label{local energy estimate in 0}
If $u$ is a an energy solution of $\partial_t u + Lu = 0$ on $Q_1(0, 0)$, then there exists a $\delta > 0$ such that
\[
\| \partial_t^l \partial_x^\alpha u \|_{L^2(Q_\delta(0,0), \mu_1)} \, \leq \, c \, \| u \|_{L^2(Q_1(0, 0), \mu_1)}
\]
for a positive constant $c$ depending on $n,l$ and $\alpha$.
\end{lemma}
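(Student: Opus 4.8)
The plan is to follow the same architecture as in the proof of Lemma \ref{local energy estimate in 1}, but \emph{without} the simplification $x_n\sim 1$: near the boundary point $0$ the operator $L$ is genuinely degenerate, so the weights $\mu_1,\mu_3,\mu_5$ must be carried along throughout. Since $Q_1(0,0)$ already sits at the unit intrinsic scale, no rescaling is needed and it suffices to fix a smooth cut-off $\eta\in C_c^\infty\bigl((-1,0]\times B_1(0)\bigr)$ with $\eta\equiv 1$ on $Q_\delta(0,0)$ and $|\partial_t^l\partial_x^\alpha\eta|\le c(l,\alpha)$. Then $\eta u$ extends by zero to an energy solution of $(\partial_t+L)(\eta u)=\omega$ on $(-1,0)\times\hs$, whose right-hand side $\omega=[\,L,\eta\,]u$ collects commutator terms supported in the annular region $Q_1\setminus Q_{\delta_1}$ and involving at most third-order derivatives of $u$, each carrying enough powers of $x_n$ that the prefactor $x_n^{-1}$ in $L$ is harmless (the factor $\Delta(x_n^{\,3}\,\cdot)$ supplies the compensating power of $x_n$). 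This structure is what drives the bootstrap to higher derivatives.

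First I would establish the boundary analogue of the first-order local energy estimate,
\[
\|\nabla u\|_{L^2(Q_{\delta_1}(0,0),\mu_1)}+\|D_{\!x}^2u\|_{L^2(Q_{\delta_1}(0,0),\mu_3)}\,\lesssim\,\|u\|_{L^2(Q_1(0,0),\mu_1)}\,,
\]
by inserting a time-regularized version of $\eta^2u$ into the weak formulation of $\partial_t u+Lu=0$ on $Q_1(0,0)$, exactly as in the proof of Proposition \ref{energy identity}. Integrating by parts twice in space produces the good terms $\int\!\!\int\eta^2\bigl((\Delta u)^2x_n^{\,3}+4|\nabla'u|^2x_n\bigr)$ on the left; the time term contributes $\tfrac12\int\eta^2u^2\,d\mu_1$ at $t=0$ (nonnegative) minus a harmless $\int\!\!\int u^2\partial_t(\eta^2)\,d\mu_1$; and every remaining cross term carries at least one derivative of $\eta$ and is absorbed by Cauchy--Schwarz, the residual lower-order contributions then being controlled via the identity (\ref{auxiliary formula}), the weighted Gagliardo--Nirenberg inequality $\|\nabla v\|_{L^2(\mu_2)}^2\le c_n\|v\|_{L^2(\mu_1)}\|D_{\!x}^2v\|_{L^2(\mu_3)}$ and Hardy's inequality, just as in the proof of Lemma \ref{exponential norm decrease}. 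Since tangential and temporal derivatives commute with $\partial_t+L$, iterating this estimate on shrinking cylinders yields the bound for all $\partial_t^l\partial_{x'}^{\alpha'}u$. To reach vertical derivatives I would, following Lemma \ref{local energy estimate in 1}, set $u^{(k)}=\partial_{x_n}^ku$, note that it solves $\partial_tu^{(k)}+L_ku^{(k)}=\tilde f_k$ with $L_k=x_n^{-k-1}\Delta(x_n^{\,k+3}\Delta\,\cdot)-4\Delta'$ and $\tilde f_k$ a combination of tangential Laplacians of the $u^{(j)}$, $j<k$ (already under control), and rerun the above argument for the operator $L_k$, which obeys the same structural identities; an induction on $k=0,\dots,\alpha_n$ then closes the spatial estimate. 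Finally the temporal derivatives are recovered by solving $\partial_t^{l+1}u=-\partial_t^l Lu$ inductively and invoking the spatial bounds, at the cost of decreasing $\delta$ a finite number of times.

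The main obstacle is this first step. In contrast with the uniformly parabolic situation of Lemma \ref{local energy estimate in 1}, one cannot discard the weights, so the cross terms produced by the localization involve intermediate derivatives with intermediate weights (for instance $\int x_n^{\,3}\,\Delta u\,\nabla\eta^2\!\cdot\!\nabla u$ and $\int x_n^{\,3}\,u\,\Delta u\,\Delta\eta^2$), and absorbing them cleanly requires interleaving (\ref{auxiliary formula}), the weighted interpolation inequality and Hardy's inequality in the correct order rather than a single Poincar\'e step; one must also justify that $\eta^2u$, suitably regularized in time, is an admissible test function on $\hs$, which is where the density statement (\ref{density}) and the regularization device of Proposition \ref{energy identity} enter. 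Once the degenerate first-order Caccioppoli estimate is secured, the tangential/temporal iteration and the vertical induction through the operators $L_k$ are routine, the paper having already observed that ``the same calculations'' go through for $L_k$.
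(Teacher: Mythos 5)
Your architecture mirrors the paper's: a bounded-derivative cut-off (no rescaling is needed at $Q_1(0,0)$), a weighted Caccioppoli step, tangential iteration, and a vertical induction through the conjugated operators $L_k=x_n^{-k-1}\Delta(x_n^{\,k+3}\Delta\,\cdot)-4\Delta'$. You are also right that the weights must be carried, unlike in Lemma \ref{local energy estimate in 1}.

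There is, however, a genuine gap in the vertical induction that your sketch does not confront. The natural energy for $\partial_t+L_k$ is taken with respect to $\mu_{k+1}$, not $\mu_1$, so rerunning the argument for $u^{(k)}=\partial_{x_n}^k u$ yields
\[
\| \nabla u^{(k)} \|_{L^2(Q_\delta,\mu_{k+1})} + \| D_{\!x}^2 u^{(k)} \|_{L^2(Q_\delta,\mu_{k+3})} \,\lesssim\, \| u^{(k)} \|_{L^2(Q_1,\mu_{k+1})} \, ,
\]
and chaining this down to $k=0$ gives $\| \partial_{x_n}^{\alpha_n} u \|_{L^2(Q_\delta,\mu_{\alpha_n})} \lesssim \| u \|_{L^2(Q_1,\mu_1)}$. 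But the lemma asks for the bound in $\mu_1$, which near $\partial H$ (where $x_n\le 2$) is a \emph{stronger} norm than $\mu_{\alpha_n}$; so the induction alone does not ``close the spatial estimate'' when $\alpha_n\ge 1$. The paper closes this gap with an explicit post-processing step that your proposal omits: introduce an extra spatial cut-off $\psi$ and apply Hardy's inequality $\alpha_n$ times to convert weight into derivative order,
\[
\| \partial_{x_n}^{\alpha_n}(\psi u) \|_{L^2(\mu_1)} \,\lesssim\, \| \partial_{x_n}^{2\alpha_n}(\psi u) \|_{L^2(\mu_{2\alpha_n+1})} \,\lesssim\, \sum_{0\le\beta_n\le 2\alpha_n} \| \partial_{x_n}^{\beta_n} u \|_{L^2(\mu_{\beta_n+1})} \, ,
\]
and the right-hand side is exactly in the weights $\mu_{\beta_n+1}$ which the vertical induction does control. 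You invoke Hardy only inside the Caccioppoli step to absorb cross terms, where it is optional, but you do not use it for this essential weight conversion at the end. Adding this final Hardy step is what turns your induction into a proof of the stated estimate.
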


\begin{proof}
As before we keep the right endpoint of the time interval and the center of the ball stationary, and we merely write $Q_R$ to mean $Q_R(0, 0)$. Both the proof of Lemma \ref{local energy estimate in 1} and the present one show basically the same pattern. The major change concerns the cut-off function: Via a product ansatz we obtain
\[
|\partial_t^l \partial_x^\alpha \eta| \, \lesssim \, 1
\]
for any $l \in \N_0$ and any multi-index $\alpha$, as well as $\eta \equiv 1$ on $Q_\delta$ while $spt \, \eta \subset (-1,1)\times B_1$. Another difference is the behavior of the measure. Near the boundary we have no control of the weight from below, but still from above. More precisely, we know that $x_n < 2$ for $x\in B_1$. \\

By means of these preliminary considerations, we proceed the same way as in the previous proof to find
\[
\| \nabla u^{(k)} \|_{L^2(Q_\delta, \mu_{k+1})} + \| D_{\!x}^2 u^{(k)} \|_{L^2(Q_\delta, \mu_3)} \, \lesssim \, \| u^{(k)} \|_{L^2(Q_1, \mu_{k+1})}
\]
as well as
\[
\| D_{\!x}^2 u^{(k)} \|_{W^{2,2}(Q_\delta, \mu_{k+1}, \mu_{k+3}, \mu_{k+5})} \, \lesssim \, \| u^{(k)} \|_{L^2(Q_1, \mu_{k+1})} \, .
\]
Iteratively, this gives
\[
\| \partial_x^{\alpha'} u \|_{L^2(Q_\delta, \mu_1)} \, \lesssim \, \| u \|_{L^2(Q_1, \mu_1)}
\]
and
\[
\| \nabla u^{(k)} \|_{L^2(Q_\delta, \mu_{k+1})} \, \lesssim \, \| u \|_{L^2(Q_1, \mu_1)} \, .
\]
If $\alpha_n=0$ we are already done. Suppose now $\alpha_n \geq 1$. With Hardy's inequality applied $\alpha_n$ times to $\partial_{x_n}^{\alpha_n} (\psi u)$, where $\psi$ is a spatial cut-off function obeying the above estimate, we obtain
\begin{align*}
\| \partial_{x_n}^{\alpha_n} u \|_{L^2(B_\delta, \, \mu_1)} \,
&\leq \, \| \partial_{x_n}^{\alpha_n} (\psi u) \|_{L^2(H, \mu_1)} \, \lesssim \, \| \partial_{x_n}^{2 \alpha_n} (\psi u) \|_{L^2(H, \mu_{2\alpha_n+1})} \\
&\lesssim \, \sum_{0\leq\beta_n \leq 2\alpha_n} \| (\underbrace{\partial_{x_n}^{2 \alpha_n-\beta_n} \psi}_{|\cdot| \, \lesssim \, 1}) \, (\partial_{x_n}^{\beta_n} u) \|_{L^2(H, \mu_{\beta_n+1})} \, \lesssim \, \| u \|_{L^2(Q_1, \mu_1)}
\end{align*}
for a sufficiently small $\delta < 1$. To bound temporal derivatives we proceed as before. This finishes the proof.
\end{proof}

\begin{rem}
Note that the local energy estimate obtained in Remark \ref{local energy estimate} is independent of the position of the ball $B_R(x_0)$ and it contains Lemma \ref{local energy estimate in 1} and Lemma \ref{local energy estimate in 0} as special cases.
\end{rem}

The pointwise estimate in Proposition \ref{pointwise estimate} will be a consequence of the previous two lemmas combined with the following Morrey-type inequality: Let $\Omega \subseteq H$ open satisfy the cone condition. If $2 k > n$, then
\begin{equation}
\label{Morrey-type inequality}
\| u \|_{L^{\infty}(\Omega)} \, \leq \, c(n,k, \Omega) \, \| u \|_{W^{k,2}(\Omega)}
\end{equation}
for all $u \in W^{k,2}(\Omega)$.

\begin{proof}[Proof of Proposition \ref{pointwise estimate}]
Since the equation $\partial_t u + Lu = 0$ is invariant under translation in any direction except the $x_n$-direction it suffices to consider $t_0=0$ and $x_0 = (0, \dots, 0, x_{0,n})$. Moreover, we recall that $u \circ T_\lambda$ is an energy solution on $T_\lambda^{-1}(Q_R(0,x_0))$, where $T_\lambda$ is defined as in (\ref{linear scaling}), whenever $u$ is a solution on $Q_R(0, x_0)$. \\

First we assume that $R \leq C \sqrt{x_{0,n}}$ for some $C \gg 1$ and take $\lambda = x_{0,n}$ as scaling factor. In this case we can find an $\eps < 1$ such that $r = \frac{\eps R}{C \sqrt{x_{0,n}}} \ll 1$ is a legitimate radius in Lemma \ref{local energy estimate in 1}, and so we get
\begin{align*}
|\partial_{\hat{t}}^l \partial_{\hat{x}}^\alpha u(0, x_0)| \, 
&\lesssim \, x_{0,n}^{-2l-|\alpha|} \sum_{j + |\beta| \leq k} r^{4j+|\beta|-\frac{n+4}{2}} \, \| \partial_t^{l+j} \partial_x^{\alpha+\beta} (u\circ T_{x_{0,n}}) \|_{[L^2(Q_{\delta r}(0, e_n))} \\
&\lesssim \, x_{0,n}^{-2l-|\alpha|} r^{-4l-|\alpha|-\frac{n+4}{2}} \, \| u\circ T_{x_{0,n}} \|_{L^2(Q_r(0, e_n), \mu_1)} \\
&\leq \, x_{0,n}^{-2l-|\alpha|-\frac{n+3}{2}} r^{-4l-|\alpha|-\frac{n+4}{2}} \, \| u \|_{L^2(Q_R(0, x_0), \mu_1)} \, .
\end{align*}
In the first estimate we used inequality (\ref{Morrey-type inequality}) in $Q_{\delta r}(0, e_n)$ applied to the function $\partial_t^l \partial_x^\alpha (u \circ T_{x_{0,n}})$, and in the second one we used Lemma \ref{local energy estimate in 1} and the fact that $x_n \sim 1$ in $B_1(e_n)$. The last line follows from 
\[
| \det \nabla_{t,x} T_{x_{0,n}} |^{-1} \, d\mu_1(x) \, = \, x_{0,n}^{-n-3} \, d\mu_1(\hat{x})
\]
and the enclosure $T_{x_{0,n}}(Q_r(0, e_n)) \subset Q_R(0,x_0)$, see (\ref{ball topology}). Finally, the coefficient can be estimated to be
\[
x_{0,n}^{-2l-|\alpha|-\frac{n+3}{2}} r^{-4l-|\alpha|-\frac{n+4}{2}} \, \lesssim \, R^{-4l-|\alpha|} \sqrt{x_{0,n}}^{-|\alpha|} (R^{n+4} \sqrt{x_{0,n}}^{\,n+2})^{-\frac{1}{2}}
\]
which is bounded above (up to some constant) by $\delta_{l,\alpha}(R, x_0) |Q_R(0, x_0)|_1^{-\frac{1}{2}}$ as stated. \\

In order to investigate the situation at the boundary, we assume $R > C \sqrt{x_{0,n}}$. Now we claim that
\begin{equation}
\label{Morrey in 0}
\tag{$\ast$}
|\partial_{\hat{t}}^l \partial_{\hat{x}}^\alpha u(0, x_0)| \, \lesssim \, \lambda^{-2l-|\alpha|} \sum_{j+|\beta| \leq k} \| \partial_t^{l+j} \partial_x^{\alpha+\beta} (u \circ T_\lambda) \|_{L^2(Q_\delta(0,0))} 
\end{equation}
provided $2k > n+1$. As in the proof of Lemma \ref{local energy estimate in 0}, we apply Hardy's inequality to find this bounded by
\[
\lambda^{-2l-|\alpha|} \sum_{j+|\beta| \leq k} \| \partial_t^{l+j} \partial_x^{\alpha+\beta} (u \circ T_\lambda) \|_{L^2(Q_\delta(0,0), \mu_2)} + \| \nabla_{\!x} \, \partial_t^{l+j} \partial_x^{\alpha+\beta} (u \circ T_\lambda) \|_{L^2(Q_\delta(0,0), \mu_2)} \, .
\]
% Actually, delta on the lhs needs to be smaller than delta on the rhs. For simplicity, we use the same constant.
Since $x_n \lesssim 1$ in $B_\delta(0)$, we can replace $\mu_2$ by $\mu_1$. Thus by Lemma \ref{local energy estimate in 0} and the transformation formula we get
\[
|\partial_{\hat{t}}^l \partial_{\hat{x}}^\alpha u(0, x_0)| \, \lesssim \, \lambda^{-2l-|\alpha|-\frac{n+3}{2}} \, \| u \|_{L^2(T_\lambda(Q_1(0,0)), \mu_1)} \, .
\]
Taking $\lambda = \eps R^2$, we observe that $T_\lambda(Q_1(0,0)) \subset Q_R(0,x_0)$ if $\eps$ is chosen appropriately small. This combined with
\[
R^{-n-3} \, \lesssim \, \bigl( R^{n+4} (R + \sqrt{x_{0,n}})^{n+2} \bigr)^{-\frac{1}{2}} \, \sim \, |Q_R(0, x_0)|_1^{-\frac{1}{2}}
\]
gives the estimate the desired form. \\

To prove (\ref{Morrey in 0}) we invoke inequality (\ref{Morrey-type inequality}) once more, this time applied to $\partial_{\hat{t}}^l \partial_{\hat{x}}^\alpha (u \circ T_\lambda)$ in the cylinder $Q_r(0,0)$, where $r \ll R$ is so chosen that $T_\lambda^{-1}(Q_r(0,x_0)) \subset Q_\delta(0,0)$.
\end{proof}

\begin{proof}[Proof of Proposition \ref{estimate by rough initial data}]
Fix $x \in \hs$ and $t \in I$, with $t \leq 1$, and a constant $C$. Since either $l \neq 0$ or $\alpha \neq 0$, we conclude
\[
\partial_t^l \partial_x^\alpha \bigl( u(t,x) - C \bigr) \, = \, \partial_t^l \partial_x^\alpha u(t,x) \, .
\]
Proposition \ref{pointwise exponential estimate}, together with the fact that $u-C$ is again a solution with $(u-C)(0)=g-C$, then implies
\[
|\partial_t^l \partial_x^\alpha u(t,x)| \, \lesssim \,
\frac{\delta_{l, \alpha}(\sqrt[4]{t},x)}{|B_{\!\sqrt[4]{t}}(x)|_1^{\,\frac{1}{2}}} \; e^{c_n (c_L^{\,2} + c_L^{\,4}) t - \Psi(x)} \, \| e^{\Psi} (g-C) \|_{L^2(\mu_1)} \, .
\]
For $C=g(x)$, we have $|g(y)-C| \leq |x-y| \, \| \nabla g \|_{L^{\infty}(H)} \, = \, |x-y| \, \| g \|_{\dot{C}^{0,1}}$. Now for a fixed radius $R \in (0,1]$, we decompose $H$ into the annuli $A_i(x) = B_{iR}(x) \setminus B_{(i-1)R}(x)$, $i\in\N$. This gives the estimate
\[
\| e^{\Psi} (g-C) \|_{L^2(\mu_1)} \, \leq \, \Bigl( \ \sum_{i\in\N} \int_{A_i(x)} e^{2 \Psi(y)} \, |x-y|^2 \, d\mu_1(y) \Bigr)^\frac{1}{2} \, \| g \|_{\dot{C}^{\, 0,1}(H)} \, .
\]
We choose $\Psi$ so that $\Psi \sim - \frac{1}{R} \, d(x,\cdot)$. Consequently, $\Psi(x)=0$ and $c_L = \frac{1}{R} \geq 1$. Then we refer to the properties of the intrinsic metric (\ref{ball topology})--(\ref{ball measure}) and the doubling condition (\ref{doubling condition}) to find
\begin{align*}
\bigl( \, \sum_{i\in\N} \int_{A_i(x)} e^{2 \Psi(y)} \, |x-y|^2 \, d\mu_1(y) \bigr)^\frac{1}{2} \,
&\lesssim \, R (R + \sqrt{x_n}) \sum_{i \in \N} e^{-i+1} \, i^{2} \, |B_{iR}(x)|_1^{\,\frac{1}{2}} \\
&\lesssim \, R \, (R + \sqrt{x_n}) \, |B_{R}(x)|_1^{\,\frac{1}{2}} \sum_{i \in \N} e^{-i} \, i^{n+3} \, .
\end{align*}
Since $e^{-i}$ goes to zero as $i \to \infty$ faster than any polynomial, the sum is bounded above. Now, as usual, we set $R=\sqrt[4]{t}$ such that the exponential factor $e^{c_n c_L^{\,4} t - \Psi(x)} = e^{c_n}$ is a constant. Altogether, we obtain
\[
|\partial_t^l \partial_x^\alpha u(t,x)| \, \lesssim \, \sqrt[4]{t}^{1-4l-|\alpha|} (\sqrt[4]{t} + \sqrt{x_n})^{1- |\alpha|} \, \| g \|_{\dot{C}^{0,1}}
\]
for all $(t,x) \in (0, \min\{1,T\}] \times\hs$. \\

The general case follows from this estimate by scaling. Let  $(\hat{t}, \hat{x}) \in I \times \hs$ be arbitrary, but fixed. Then
\begin{align*}
|\partial_{\hat{t}}^l \partial_{\hat{x}}^\alpha u(\lambda^2, \hat{x})| \,
&= \, \lambda^{-2l-|\alpha|} \, |\partial_t^l \partial_x^\alpha (u \circ T_\lambda)(1,x)| \\
&\lesssim \, \lambda^{-2l-|\alpha|} (1 + \sqrt{x_n})^{1-|\alpha|} \, \| \nabla_{\!x} \, g(\lambda\,\cdot) \|_{L^\infty(H)} \\
&= \, \sqrt{\lambda}^{1-4l-|\alpha|} (\sqrt{\lambda} + \sqrt{\hat{x}_n})^{1-|\alpha|} \, \| \nabla_{\!\hat{x}} \, g \|_{L^\infty(H)}
\end{align*}
with $0 < \lambda \leq \sqrt{T}$, and the statement follows with $\lambda = \sqrt{\hat{t}}$.
\end{proof}

We are now ready to prove the crucial Gaussian estimate for the Green kernel.

\begin{proof}[Proof of Theorem \ref{Gaussian estimate}]
It suffices to consider the case $0 = s < t$ which we will assume in the following whenever it is convenient. Now let $u$ be an energy solution of the initial value problem with $u(0)=g$. Then, in view of Corollary \ref{pointwise estimate by initial data}, the functional that assigns to $u(0, \cdot)$ the evaluation $\partial_t^l \partial_x^\alpha u(t,x)$ is continuous. By the Riesz representation theorem, we can thus find a kernel $k_{l, \alpha}(t, x, 0; \cdot) \in L^2(\mu_1)$ such that
\[
\partial_t^l \partial_x^\alpha u(t,x) \, = \, \int_H k_{l, \alpha}(t, x, 0; y) u(0,y) \, d\mu_1(y) \, .
\]
Putting $G_{l, \alpha}(t, x, 0, y) = y_n \, k_{l, \alpha}(t, x, 0; y)$, this already proves the existence of $G=G_{0,0}$. Moreover, since
\[
\partial_t^l \partial_x^\alpha u(t,x) \, = \, \partial_t^l \partial_x^\alpha \int_H G(t,x,0,y) u(s,y) \, dy \, = \, \int_H \partial_t^l \partial_x^\alpha \, G(t,x,0,y) u(0,y) \, dy
\]
by Lebegue's dominated convergence theorem, we have $G_{l, \alpha} = \partial_t^l \partial_x^\alpha G$. \\

Now fix $t > 0$ and $x \in \hs$, and let $\Psi$ be Lipschitz satisfying the assumptions of Lemma \ref{exponential norm decrease}. We have
\begin{equation}
\label{duality for Linfty}
\tag{$\ast$}
\| e^{-\Psi} |B_{\!\sqrt[4]{t}}(\cdot)|_1^{\,\frac{1}{2}} k_{l, \alpha} \|_{L^{\infty}} \, = \, \sup_{\| g \|_{L^1( \mu_1)} \, \leq \, 1} |\int_H k_{l, \alpha} \, e^{-\Psi} |B_{\!\sqrt[4]{t}}(\cdot)|_1^{\,\frac{1}{2}} g \, d\mu_1|
\end{equation}
by definition. By the property that a solution can be written in terms of its corresponding kernel, this reads
\[
\sup_{\| g \|_{L^1( \mu_1)} \, \leq \, 1} \bigl\{ |\partial_t^l \partial_x^\alpha v(t,x)| \mid v \; \text{is solution with} \; v(s) = e^{-\Psi} |B_{\!\sqrt[4]{t-s}}(\cdot)|_1^{\,\frac{1}{2}} g \bigr\} \, .
\]
Hence
\[
e^{\Psi(x)} \, |B_{\!\sqrt[4]{t}}(x)|_1^{\,\frac{1}{2}} \, |\partial_t^l \partial_x^\alpha v(t,x)| \, \lesssim \, \delta_{l, \alpha}(\sqrt[4]{t}, x) \, e^{c_n (c_L^{\,2} +c_L^{\,4}) t} \, \| e^{\Psi} \, v\Bigl( \frac{t}{2} \Bigr) \|_{L^2(\mu_1)}
\]
which follows from Proposition \ref{pointwise exponential estimate} applied to $v$ in the points $s=\frac{t}{2} < t$. \\

To prepare the next step, we introduce the multiplication operator $M: L^2(\mu_1) \to L^2(\mu_1)$ that assigns to a function its multiplication by $|B_{\!\sqrt[4]{t}}(\cdot)|_1^{\,\frac{1}{2}}$ and, given an energy solution $v$, the modified solution operator $\widetilde{S}_s(t): L^2(\mu_1) \ni e^{-\Psi} v(s) \mapsto e^{-\Psi} v(t) \in L^2(\mu_1)$. In these notations, we apply Proposition \ref{pointwise exponential estimate} once more, but now in the points $s=0 < \frac{t}{2}$, for $l=\alpha=0$ and with $\Psi$ replaced by $-\Psi$, to find
\[
\| M \widetilde{S}_0\Bigl( \frac{t}{2} \Bigr) \, e^{-\Psi} v(0) \|_{L^{\infty}} \, \lesssim \, e^{c_n (c_L^{\,2} + c_L^{\,4}) t} \, \| e^{-\Psi} v(0) \|_{L^2(\mu_1)} \, .
\]
Thus $M \widetilde{S}_0$ is also an operator from $L^2(\mu_1)$ to $L^{\infty}$ with operator norm bounded by $c(n) \, e^{c_n (c_L^{\,2} + c_L^{\,4}) t}$. Its dual operator is
\[
(M \widetilde{S}_0)^*:  (L^{\infty})' \, \supset \, L^1(\mu_1) \ni \tilde{g} \mapsto {\widetilde{S}_0}^* M \, \tilde{g} \in L^2(\mu_1)
\]
and the operator norms coincide. Now choosing $\tilde{g} = e^\Psi |B_{\!\sqrt[4]{t}}(\cdot)|_1^{-\,\frac{1}{2}} v(0) \in L^1(\mu_1)$, all this amounts to
\[
\| e^{\Psi} v\Bigl( \frac{t}{2} \Bigr) \|_{L^2(\mu_1)} \, = \, \| \widetilde{S}_0^* \Bigl( \frac{t}{2} \Bigr) M \tilde{g} \|_{L^2(\mu_1)} \, \lesssim \, e^{c_n (c_L^{\,2} + c_L^{\,4}) t} \; \| \tilde{g} \|_{L^1(\mu_1)} \, .
\]
Consequently
\[
e^{\Psi(x)} \, |B_{\!\sqrt[4]{t}}(x)|_1^{\,\frac{1}{2}} \, |\partial_t^l \partial_x^\alpha v(t,x)| \, \lesssim \, \delta_{l, \alpha}(\sqrt[4]{t},x) \, e^{c_n (c_L^{\,2} + c_L^{\,4}) t} \, ,
\]
where we also used $v(0)=e^{-\Psi} |B_{\!\sqrt[4]{t}}(\cdot)|_1^{\,\frac{1}{2}} g$. Finally, we invoke the duality identity (\ref{duality for Linfty}) to reach the estimate
\[
|\partial_t^l \partial_x^\alpha G(t, x, 0, y)| \, \lesssim \, \frac{\delta_{l, \alpha}(\sqrt[4]{t},x) \, y_n}{|B_{\!\sqrt[4]{t}}(x)|_1^{\,\frac{1}{2}} \, |B_{\!\sqrt[4]{t}}(y)|_1^{\,\frac{1}{2}}} \; e^{c_n (c_L^{\,2} + c_L^{\,4}) t + \Psi(y) - \Psi(x)}
\]
valid for almost every $y \in \hs$. \\

Here we specify the choice of the Lipschitz function and set $\Psi(x) = c_L d(x,y)$. Assuming $c_L \geq 1$ and fixing all the other variables, the Gaussian function attains its minimum if $c_L= \bigl( \frac{d(x,y)}{4 c_n t} \bigr)^\frac{1}{3}$. Indeed,
\[
- \bigl( c_L d(x,y) - c_n c_L^{\,4} \, t \bigr) \, \geq \, - \, \frac{d(x,y)^\frac{4}{3}}{(c_n t)^\frac{1}{3}} \, \underbrace{(4^{-\frac{1}{3}} - 4^{-\frac{4}{3}})}_{> 0} \, = \, - \, c_{n}^{-1} \, \Bigl(\frac{d(x,y)^4}{t}\Bigr)^\frac{1}{3}
\]
and the theorem follows. \\

To rule out the quadratic term in the exponential factor, we have to assume that $c_L$, which has been calculated to be
\[
c_L \, = \, \Bigl( \frac{d(x,y)}{4 c_n t} \Bigr)^\frac{1}{3} \, ,
\]
is greater or equal than $1$. In the opposite situation when $c_L < 1$, we thus have $d(x,y) < 4 c_n t$. Scaling further reduces the time length scale to $t=1$ (cf.\ the proof of Proposition \ref{estimate by rough initial data}). Now following the same line of argument as above, but with Corollary \ref{pointwise estimate by initial data} instead of Proposition \ref{pointwise exponential estimate}, we reach
\[
|\partial_t^l \partial_x^\alpha G(t, x, 0, y)| \, \lesssim \, \frac{\delta_{l, \alpha}(\sqrt[4]{t},x) \, y_n}{|B_{\!\sqrt[4]{t}}(x)|_1^{\,\frac{1}{2}} \, |B_{\!\sqrt[4]{t}}(y)|_1^{\,\frac{1}{2}}} \; .
\]
However, this implies (\ref{ge}) since
\[
1 \, < \, e^{1 - \bigl( \frac{d(x,y)}{4 c_n} \bigr)^\frac{4}{3}}
\]
in this case and the proof is complete.
\end{proof}

We can use the exponential decay in $d$ to prove the kernel estimates in Proposition \ref{CZ-kernel estimate} which are necessary to apply the Calder\'on-Zygmund theory on $(I \times \hs, d_0, \mathcal{L} \times \mu_1)$.

\begin{proof}[Proof of Proposition \ref{CZ-kernel estimate}]
Let $s < t$, and $j,l, \alpha$ be as in the proposition. The Gaussian estimate (\ref{ge}) implies
\begin{equation}
\label{kernel estimate by volume}
\tag{$\ast$}
y_n^{-1} x_n^{\,j} \, |\partial_t^l \partial_x^\alpha G(t,x,s,y)| \, \lesssim \, d_0^{\,2j-4l-2|\alpha|} \, \bigl( |B_{d_0}(x)|_1 + |B_{d_0}(y)|_1 \bigr)^{-1} \, .
\end{equation}
This can be easily checked in two consecutive steps: Putting $R = \sqrt[4]{t-s} > 0 $, we first observe that
\[
x_n^{\,j} \, \delta_{l,\alpha}(R,x) \, \leq \, R^{\,2j-4l-2|\alpha|}
\]
if $2j - |\alpha| \leq 0$. Now, since $R^{-m} \bigl( R + d(x,y) \bigr)^m e^{- \eps \bigl( \frac{d(x,y)}{R} \bigr)^\frac{4}{3}} \leq c(m)$ for all $m\geq 0$ and all $\eps > 0$, this gives
\[
R^{2j-4l-2|\alpha|} \, e^{-\eps \bigl( \frac{d(x,y)}{R} \bigr)^\frac{4}{3}} \, \lesssim \, \bigl( R + d(x,y) \bigr)^{2j-4l-2|\alpha|} \, \leq \, d_0^{\,2j-4l-2|\alpha|} \, .
\]
Second, 
\[
|B_R(x)|_1^{-1} \, \lesssim \, \Bigl( 1 + \frac{d(x,y)}{R} \Bigr)^{2n+2} \, \bigl( |B_R(x)|_1 + |B_R(y)|_1 \bigr)^{-1}
\]
\[
\lesssim \, \Bigl( 1 + \frac{d(x,y)}{R} \Bigr)^{4n+4} \, \bigl( |B_{d_0}(x)|_1 + |B_{d_0}(y)|_1 \bigr)^{-1} \, ,
\]
just as in Remark \ref{exchange of centers in ge}. Again, the prefactor is absorbed by the exponential function which in turn is bounded by $1$. This proves (\ref{kernel estimate by volume}) and consequently, if $\frac{m}{2} = 2l+|\alpha|-j = 2$, we reach the first kernel estimate
\[
|K(t,x,s,y)| \, \lesssim \, d_0^{-4} \, \bigl( |B_{d_0}(x)|_1 + |B_{d_0}(y)|_1 \bigr)^{-1} \, \lesssim \, V(t,x,s,y)^{-1} \, .
\]
Also note that both conditions, $\frac{m}{2} = 2$ and $|\alpha| \geq 2j$, are satisfied if and only if $(j,l,\alpha) \in \mathcal{CZ}$.\\

For the second kernel condition we first observe that
\[
|K(t,x,s,y) - K(\bar{t},\bar{x},\bar{s},\bar{y})| \, \leq \, |K(t,x,s,y) - K(\bar{t},x,s,y)| + |K(\bar{t},x,s,y) - K(\bar{t},\bar{x},s,y)| \, + 
\]
\[
+ |K(\bar{t},\bar{x},s,y) - K(\bar{t},\bar{x},\bar{s},y)| + |K(\bar{t},\bar{x},\bar{s},y) - K(\bar{t},\bar{x},\bar{s},\bar{y})| \, = \, (I) + (II) + (III) + (IV) \, .
\]
We estimate term by term. We use the fundamental theorem of calculus and argue as for (\ref{kernel estimate by volume}) to find
\[
(I) \, \leq \, |t-\bar{t}| \, \sup_\tau |\partial_\tau K(\tau,x,s,y)| \, \lesssim \, |t-\bar{t}| \, \sup_\tau \, \bigl( |\tau-s| + d(x,y)^4 \bigr)^{-1} \, V(\tau,x,s,y)^{-1} \, ,
\]
if $(j,l,\alpha) \in \mathcal{CZ}$. The supremum is taken over all $\tau$ between $t$ and $\bar{t}$. By the assumption $D \leq \frac{1}{6}$, we have 
\[
|t-\bar{t}| \, \lesssim \, d_0^{\,3} \quad \text{and} \quad d_0 \, \sim \, d^{(t)}\bigl( (\bar{t},\bar{x}),(\bar{s},\bar{y}) \bigr) \, ,
\]
and hence $(I) \lesssim D V^{-1}$. In a similar way we get $(III) \lesssim D V^{-1}$, where we also need Lemma \ref{Gaussian estimate involving s- and y-derivatives} to control one derivative in the $s$-variable.\\

Now suppose $\gamma: [a,b] \to \hs$ is the geodesic between $x$ and $\bar{x}$, that is, the length of $\gamma$ is $d(x,\bar{x})$. Then
\[
\bigl| \int_a^b \nabla_{\!\gamma(\tau)\,} K\bigl( \bar{t},\gamma(\tau),s,y \bigr) \, \gamma'(\tau) \, d\tau \bigr| \, \leq \, d(x,\bar{x}) \, \sup_{z\in(\gamma)} \sqrt{z_n} \, |\nabla_{\!z} K(\bar{t},z,s,y)| \, ,
\]
again by the fundamental theorem of calculus. Employing the assumption $D \leq \frac{1}{6}$ once more, we see that
\[
d(x,y) \, \lesssim \, \sqrt[4]{|\bar{t}-s|+d(z,y)^4} \qquad \forall \; z \in \bar{B}_{d(x,\bar{x})}(x)
\]
such that
\[
e^{- \eps \bigl( \frac{d(z,y)^4}{|\bar{t}-s|} \bigr)^\frac{1}{3}} \, \lesssim \, e^{- \tilde{\eps} \bigl( \frac{d(x,y)^4}{|\bar{t}-s|} \bigr)^\frac{1}{3}} \qquad (0<\tilde{\eps} \leq \eps) \, .
\]
Moreover, we have $d_0 \lesssim \sqrt[4]{|\bar{t}-s|+d(x,y)^4}$ provided that $D \leq \frac{1}{6}$. Clearly, $(\gamma) \subset \bar{B}_{d(x,\bar{x})}(x)$ such that
\[
(II) \, \lesssim \, d(x,\bar{x}) \, V(\bar{t},x,s,y)^{-1} \, \sup_{z\in(\gamma)} \sqrt{z_n} \, \delta_{0,1}(\sqrt{|\bar{t}-s|},z) \, e^{- \frac{\tilde{\eps}}{2} \bigl( \frac{d(x,y)^4}{|\bar{t}-s|} \bigr)^\frac{1}{3}}
\]
by virtue of (\ref{kernel estimate by volume}). Now using the exponential decay, this implies $(II) \lesssim D V^{-1}$. For the final estimate we argue similarly, but now with Lemma \ref{Gaussian estimate involving s- and y-derivatives} instead of Theorem \ref{Gaussian estimate}. The proposition is proved.
\end{proof}

The proof of the linear estimate in Proposition \ref{X-norm vs Y-norm} consist of two parts, an on-diagonal estimate (Proposition \ref{weighted Lp-estimate}) and an off-diagonal estimate. The following lemma deals with the latter.

\begin{lemma}
\label{pointwise estimate against f with support f outside of a cylinder}
Let $I=(0,1)$, $x_0 \in \hs$, $p \geq 1$ and $f \in Y_p$ with $spt \, f \subseteq \bigl( [0,1] \times \hs \bigr) \setminus \bigl( (\frac{1}{4},1] \times B_2(x_0) \bigr)$. We further suppose that $j \geq 0$, $l \in \N_0$, $\alpha$ is any multi-index satisfying $|\alpha| \geq 2j$. If $u$ is an energy solution of the inhomogeneous equation on $[0,1) \times \hs$ with initial condition $u(0)=0$, then the estimate
\[
x_n^{\,j} |\partial_t^l \partial_x^\alpha u(t,x)| \, \leq \, c(n,j,l,\alpha) \, (1+\sqrt{x_{0,n}})^{2j+1-|\alpha|} \, \| f \|_{Y_p}
\]
holds for all $(t,x) \in (\frac{1}{2},1] \times B_1(x_0) \subseteq (0,1] \times \hs$.
\end{lemma}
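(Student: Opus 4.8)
The plan is to combine Duhamel's representation $u(t,x)=\int_0^t\!\!\int_H G(t,x,s,y)f(s,y)\,dy\,ds$ with the off-diagonal bound of Lemma~\ref{Gaussian estimate outside of a cylinder} and a dyadic decomposition of space-time into the parabolic cylinders on which the $Y_p$-norm is built. The support hypothesis means $f\equiv0$ on $(\tfrac14,1]\times B_2(x_0)$; hence for the fixed $(t,x)\in(\tfrac12,1]\times B_1(x_0)$ and any $(s,y)\in spt\,f$ with $s\le t$ one has $(s,y)\notin(\tfrac14,t]\times B_1(x)$, because if $s>\tfrac14$ then necessarily $y\notin B_2(x_0)$, so $d(x,y)\ge d(x_0,y)-d(x_0,x)>2-1=1$. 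Thus Lemma~\ref{Gaussian estimate outside of a cylinder} is applicable and bounds the ($x_n^{\,j}$-weighted, $x$-differentiated) kernel at every point of $spt\,f$ by $c\,(1+\sqrt{y_n})^{2j-|\alpha|}|B_1(y)|^{-1}e^{-d(x,y)/(4c_n)}$, so that $x_n^{\,j}|\partial_t^l\partial_x^\alpha u(t,x)|$ is dominated by the integral of this expression against $|f|$.

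Next I would write $(0,1]\times H=\bigsqcup_{k\ge0}(2^{-k-1},2^{-k}]\times H$, put $R_k=2^{-k/4}$ so that $(2^{-k-1},2^{-k}]=(\tfrac{R_k^4}{2},R_k^4]$ is exactly the time-slab of $Q_{R_k}(\cdot)$, and for each $k$ fix a bounded-overlap covering $\{B_{R_k}(y_i^{(k)})\}_i$ of $\overline H$, which gives a bounded-overlap covering $\{Q_{R_k}(y_i^{(k)})\}_{k,i}$ of $(0,1]\times\overline H$. On a cylinder meeting $spt\,f$ the kernel is essentially constant --- using $1+\sqrt{y_n}\sim1+\sqrt{\eta_{i,k}}$, $|B_1(y)|\sim|B_1(y_i^{(k)})|$ and $d(x,y)\ge d(x,y_i^{(k)})-R_k$, with $\eta_{i,k}=(y_i^{(k)})_n$ --- while H\"older's inequality together with the definition of $\|\cdot\|_{Y_p}$ yields $\|f\|_{L^1(Q_{R_k}(y_i^{(k)}))}\le|Q_{R_k}(y_i^{(k)})|\,R_k^{-3}(R_k+\sqrt{\eta_{i,k}})\,\|f\|_{Y_p}$. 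Inserting $|Q_{R_k}(y_i^{(k)})|\sim R_k^{n+4}(R_k+\sqrt{\eta_{i,k}})^n$ and $|B_1(y_i^{(k)})|\sim(1+\sqrt{\eta_{i,k}})^n$, the whole bound is controlled by
\[
\|f\|_{Y_p}\sum_{k\ge0}\sum_i (1+\sqrt{\eta_{i,k}})^{2j-|\alpha|-n}\,R_k^{n+1}\,(R_k+\sqrt{\eta_{i,k}})^{n+1}\,e^{-d(x,y_i^{(k)})/(4c_n)} .
\]
For fixed $k$ the inner sum is comparable, by the bounded overlap and $|B_{R_k}(y)|\sim R_k^n(R_k+\sqrt{y_n})^n$, to $R_k\int_H(1+\sqrt{y_n})^{2j-|\alpha|-n+1}e^{-d(x,y)/(4c_n)}\,d\mathcal{L}^n(y)$, after cancelling the powers of $R_k+\sqrt{y_n}$ and using $R_k\le1$.

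The decisive ingredient is then the weighted Gaussian integral
\[
\int_H(1+\sqrt{y_n})^{-n-\beta}\,e^{-d(x,y)/(4c_n)}\,d\mathcal{L}^n(y)\ \lesssim\ (1+\sqrt{x_n})^{-\beta}\ \sim\ (1+\sqrt{x_{0,n}})^{2j+1-|\alpha|},\qquad \beta:=|\alpha|-2j-1\ge-1,
\]
which I would prove by splitting $H$ into the intrinsic annuli $A_m=\{m\le d(x,y)<m+1\}$, using $|A_m|\lesssim m^n(m+\sqrt{x_n})^n$ and the Lipschitz estimate $|\sqrt{x_n}-\sqrt{y_n}|\le\tfrac12 d(x,y)$ along the connecting geodesic (cf.\ Remark~\ref{Lipschitz condition}): on $A_m$ with $m\ll\sqrt{x_n}$ one has $\sqrt{y_n}\sim\sqrt{x_n}$, so that annulus contributes $\sim m^n(1+\sqrt{x_n})^{-\beta}$, while annuli that reach the boundary occur only for $m\gtrsim\sqrt{x_n}$ and are absorbed by $e^{-m/(4c_n)}$; summing the resulting convergent series in $m$ gives the claim, and summing the trivial geometric series $\sum_k R_k<\infty$ finishes the proof. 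The main obstacle is precisely this last step: organising the interlocking volume, derivative and weight factors so that only the harmless factor $R_k^{+1}$ of the small scale survives the $y$-integration and the exponent of $(1+\sqrt{x_{0,n}})$ comes out exactly $2j+1-|\alpha|$; this rests on a careful use of the near-boundary geometry of the Carnot--Carath\'eodory balls and of the equivalence $|B_R(y)|\sim R^n(R+\sqrt{y_n})^n$.
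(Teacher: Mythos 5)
Your argument is correct and uses the same two key inputs as the paper — the off-diagonal Gaussian bound of Lemma~\ref{Gaussian estimate outside of a cylinder} and H\"older's inequality against the $Y_p$-norm on the scale-adapted cylinders — but it organises the summation quite differently. The paper's proof first extracts the factor $(1+\sqrt{x_{0,n}})^{2j+1-|\alpha|}$ by sacrificing a bit of the exponential decay to swap $y_n$ for $x_n$, then covers $\hs$ by unit $d$-balls $B_1(y_0)$, factors out $\sum_{y_0}e^{-d(x,y_0)/8c_n}$ as a convergent lattice sum, and finally controls the remaining local quantity $\sup_{y_0}\int_0^1\int_{B_1(y_0)}(1+\sqrt{y_n})^{-1}|B_1(y)|^{-1}|f|$ by a Vitali covering of $(0,1]\times B_1(y_0)$ by cylinders $Q_{R_m}(z_i)$, treating $y_0$ near and away from $\partial H$ separately. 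You instead work at a single scale per dyadic time slab: cover $H$ directly by balls of radius $R_k=2^{-k/4}$, bound $\|f\|_{L^1(Q_{R_k}(y_i^{(k)}))}$ by H\"older and the $Y_p$-definition, and then compare the resulting Riemann-type sum to the weighted Gaussian integral $\int_H(1+\sqrt{y_n})^{2j-|\alpha|-n+1}e^{-d(x,y)/4c_n}\,dy$, which you evaluate via intrinsic annuli using $|B_R(y)|\sim R^n(R+\sqrt{y_n})^n$ and the $\tfrac12$-Lipschitz property of $y\mapsto\sqrt{y_n}$ in the metric $d$. This is arguably more transparent: the factor $(1+\sqrt{x_{0,n}})^{2j+1-|\alpha|}$ (with $x_n\sim x_{0,n}$ on $B_1(x_0)$) drops out cleanly at the end from the single Gaussian integral, rather than being inserted by hand at the start; the cost is that you must be slightly careful that the bounded-overlap comparison between $\sum_i$ and $\int_H\,dy/|B_{R_k}(y)|$ is uniform in $k$ and that $R_k\le1$ is used with the correct sign to upgrade $(R_k+\sqrt{y_n})$ to $(1+\sqrt{y_n})$. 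Both routes rest equally on the Carnot--Carath\'eodory geometry near $\partial H$, so neither is more general, but yours removes the two-layer covering and the near/far case split that the paper needs.

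One small point worth making explicit in a final write-up: with $l\ge1$ the representation $\partial_t^l\partial_x^\alpha u=\int_0^t\!\int_H\partial_t^l\partial_x^\alpha G\,f$ produces boundary terms from the $\partial_t$ hitting the upper limit; these vanish here because $G(t,x,t,\cdot)$ concentrates at $y=x\in B_1(x_0)$ while $f(t,\cdot)$ is supported outside $B_2(x_0)$ for $t>\tfrac14$, but this deserves a sentence.
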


\begin{proof}
Let $Q(x_0) = (\frac{1}{4},t] \times B_2(x_0)$ for which $Q(x_0) \cap spt \, f = \emptyset$. Since $B_1(x) \subset B_2(x_0)$ for $x \in B_1(x_0)$,
\[
\bigl( (0,t] \times H \bigr) \setminus Q(x_0) \, \subset \, \bigl( (0,t] \times H \bigr) \setminus \bigl( (\frac{1}{4},t] \times B_1(x) \bigr)
\]
such that
\[
x_n^{\,j} |\partial_t^l \partial_x^\alpha u(t,x)| \, \lesssim \, \int_{(0,1) \times H} (1+\sqrt{y_n})^{2j-|\alpha|} \, |B_1(y)|^{-1} \, e^{- \frac{d(x,y)}{4 c_n}} \, |f(s,y)| \, dy ds
\]
for all $(t,x) \in (\frac{1}{2}, 1] \times B_1(x_0)$ by Lemma \ref{Gaussian estimate outside of a cylinder}. Thanks to the exponential decay in $d(x,y)$ we may replace $y_n$ by $x_n$ in the first factor, regardless of the sign of the exponent. Thus we obtain the bound
\[
(1+\sqrt{x_{0,n}})^{2j+1-|\alpha|} \int_{(0,1) \times H} (1+\sqrt{y_n})^{-1} \, |B_1(y)|^{-1} \, e^{- \frac{d(x,y)}{8 \, c_n}} \, |f(s,y)| \, dy ds
\]
since also
\[
(1+\sqrt{x_n})^{\gamma} \, \lesssim \, (1+d(x,x_0))^{2 |\gamma|} \, (1+\sqrt{x_{0,n}})^{\gamma} \, < \, 2^{2 |\gamma|} \, (1+\sqrt{x_{0,n}})^{\gamma}
\]
for all $x \in B_1(x_0)$, and hence it remains to estimate the integral. To this end, we first cover the upper half space by countably many balls $B_1(y_0)$. Then the above integral is (up to a constant) bounded by
\[
\sum_{y_0} \int_0^1 \int_{B_1(y_0)} (1+\sqrt{y_n})^{-1} \, |B_1(y)|^{-1} \, |f(s,y)| \, e^{-\frac{d(x,y)}{8 c_n}} \, dy ds
\]
\[
\lesssim \, \Bigl( \sup_{y_0} \int_0^1 \int_{B_1(y_0)} (1+\sqrt{y_n})^{-1} \, |B_1(y)|^{-1} \, |f(s,y)| \, dy ds \, \Bigr) \, \sum_{y_0} \, e^{- \frac{d(x,y_0)}{8 c_n}} \, .
\]
The series is uniformly convergent in $x$ because
\[
\sum_{y_0} e^{- \frac{d(x,y_0)}{8 c_n}} \, \leq \, \sum_{k \in \N} \, \sum_{y_0 \in B_k(x)} e^{- \frac{k-1}{8 c_n}} \, = \, e^\frac{1}{8 c_n} \, \sum_{k \in \N} e^{-\frac{k}{8 c_n}} \, \# \{y_0 \mid y_0 \in B_k(x)\} \, ,
\]
and the number of lattice points\footnote{This is related to the \emph{Gauss circle problem} that asks how many lattice points are inside a given ball of radius $k$. In the $2$-dimensional Euclidean setting there are about $N(k) = \pi k^2 + \mathcal{O}(k^{0.5+\eps})$, with $0<\eps\leq 0.1298\dots$, integer lattice points in $B_{k}(0)$. The lower limit $0$ was obtained independently by Hardy and Landau in 1915, and the upper bound by Huxley \cite{H03}.} in a ball grows at most polynomially in $k$. As for the time interval $(0,1]$, we choose the cover $(\frac{1}{2} \, R_m^{\,4}, R_m^{\,4}]$, $m \in N_0$, where $R_m = 2^{-\frac{m}{4}}$. Moreover, for any $y_0\in \hs$ and any $m \in \N_0$, there exists a disjoint collection of balls $\{ B_{\!\frac{1}{3} R_m}(z_i) \}_{i=1}^N$ in $B_1(y_0)$ such that
\[
\bigcup_{i=1}^N B_{R_m}(z_i) \, \supset \, B_1(y_0) \, .
\]
This covering result is due to Vitali (see e.g.\ \cite{K04}). Hence, invoking also the doubling property (\ref{doubling condition}), we find
\begin{equation}
\label{Vitali's consequence}
\tag{$\ast$}
\sum_{i=1}^N \, |B_{R_m}(z_i)| \, \leq \, b \sum_{i=1}^N |B_{\!\frac{1}{3} R_m}(z_i)| \, = \, b \, |\bigcup_{i=1}^N B_{\!\frac{1}{3} R_m}(z_i)| \, \leq \, b \, |B_1(y_0)| \, .
\end{equation}
Note that the cylinders $Q_{R_m}(z_i)$, $i=1, \dots, N$, cover $(1,0] \times B_1(y_0)$ and we thus investigate the expression
\[
(I) \, = \, \sup_{y_0} \sum_{m\in\N_0} \, \sum_{i=1}^N \int_{Q_{R_m}(z_i)} (1+\sqrt{y_n})^{-1} \, |B_1(y)|^{-1} \, |f(s,y)| \, dy ds \, .
\]
The idea now is to consider the boundary-case and the situation away from the boundary separately. Write 
\[
\{ y_0 \} \, = \, \{\sqrt{y_{0,n}} \ll 1\} \cup \{1 \ll \sqrt{y_{0,n}}\} \, = \, A \, \cup \, A' \, .
\]
We start by discussing the latter case $y_0 \in A'$. If $y_{0,n}$ is sufficiently large in comparison with $1$, we already know that $y_{0,n} \sim y_n$ for all $y \in B_1(y_0)$ and hence, in particular, for $y=z_i$. Thus by transitivity,
\[
(I) \, \lesssim \,
\sum_{m\in\N_0} \, \sum_{i=1}^N |B_1(y_0)|^{-1} \, (R_m + \sqrt{z_{i,n}})^{-1} \int_{Q_{R_m}(z_i)} |f(s,y)| \, dy ds \, .
\]
Finally, using H\"older's inequality and the estimate obtained in (\ref{Vitali's consequence}), we see that this is bounded above by
\[
|B_1(y_0)|^{-1} \sum_{m\in\N_0} \, \sum_{i=1}^N (R_m + \sqrt{z_{i,n}})^{-1} \, |Q_{R_m}(z_i)|^{\frac{p-1}{p}} \, \| f \|_{L^p(Q_{R_m}(z_i))}
\]
\[
\leq \, |B_1(y_0)|^{-1} \sum_{m\in\N_0} R_m \sum_{i=1}^N |B_{R_m}(z_i)| \, \| f \|_{Y_p} \, \lesssim \, \sum_{m\in\N_0} R_m \, \| f \|_{Y_p}
\]
for all $p \geq 1$. \\

Turning to the case $y_0 \in A$, we apply the rather rough estimate $(1+\sqrt{y_n})^{-1} \, |B_1(y)|^{-1} \lesssim 1$ to get
\[
(I) \, \lesssim \, \sum_{m\in\N_0} \, \sum_{i=1}^{N} R_m \, (R_m + \sqrt{z_{i,n}}) \, |B_{R_m}(z_i)| \, \| f \|_{Y_p} \,\overset{\text{(\ref{Vitali's consequence})}}{\lesssim} \, |B_1(y_0)| \, \sum_{m\in\N_0} R_m \, \| f \|_{Y_p} \, .
\]
Now, near the boundary we can always find a ball centered on the boundary that contains $B_1(y_0)$ such that $|B_1(y_0)|$ is bounded by some finite number which does not depend on the location of $y_0$. Summation over $m \in \N_0$ is possible in any of the cases and so, for any $(t,x) \in (\frac{1}{2},1] \times B_1(x_0)$, we reach
\[
x_n^{\,j} |\partial_t^l \partial_x^\alpha u(t,x)| \, \lesssim \, (1+\sqrt{x_{0,n}})^{2j+1-|\alpha|} \, \| f \|_{Y_p} \sum_{m\in\N_0} R_m \, .
\]
This amounts to the assertion of the lemma.
\end{proof}

This implies the linear estimate $\| u \|_{X_p} \lesssim \| f \|_{Y_p} + \| g \|_{\dot{C}^{0,1}}$ which is an essential ingredient in the proof of our main result for the perturbation equation obtained in Theorem \ref{well-posedness of the nonlinear problem}.

\begin{proof}[Proof of Proposition \ref{X-norm vs Y-norm}]
We first consider the case that $spt \, f \subseteq [0,1] \times \hs$. For an arbitrary $x_0 \in \hs$, we write 
\[
f \, = \, \chi_{Q(x_0)} f + (1-\chi_{Q(x_0)}) f \, = \, f_1 + f_2 \, ,
\]
where $Q(x_0) = (\frac{1}{4},1] \times B_2(x_0)$. This splits the equation into an on- and an off-diagonal part. \\

Now suppose $u$ is an energy solution of $\partial_t u + Lu=f_1$ with $u(0)=0$. For $p \in (1,\infty)$, we then have
\[
\| \partial_t^l \partial_x^\alpha u \|_{L^p(Q_1(x_0), \mu_{jp})} \, \lesssim \, \| f_1 \|_{L^p((0,1) \times H)} \, \lesssim \, (1+\sqrt{x_{0,n}}) \, | Q_1(x_0)|^\frac{1}{p} \, \| f \|_{Y_p}
\]
by Proposition \ref{weighted Lp-estimate} with $\sigma=0$. For the first estimate we require $j,l$ and $\alpha$ to be admissible, that is such that $(j,l,\alpha) \in \mathcal{CZ}$. For the second one we cover $Q(x_0)$ by cylinders of the form $Q_{r_k}(x_i)$ to get
\[
\| f \|_{L^p(Q(x_0))} \, \leq \, \sum_{i=1}^{c(n)} \, \sum_{k=1}^2 r_k^{-3} \, (r_k+\sqrt{x_{i,n}}) \, |Q_{r_k}(x_i)|^\frac{1}{p} \, \| f \|_{Y_p} \, ,
\]
with $r_k \in \{2^{-\frac{1}{4}},1\}$ and $x_i \in B_2(x_0)$ for $1\leq i \leq c(n)$. Again there are different treatments near and far away from the boundary: Let $\sqrt{x_{0,n}} \lesssim 1$ and $|\alpha| \geq 2j$. Then, through the obvious inequality
\[
1 + \sqrt{x_{0,n}} \, \lesssim \, (1+\sqrt{x_{0,n}})^{2j+1-|\alpha|} \, ,
\]
the above estimate takes on the desired form. \\
For $1\lesssim \sqrt{x_{0,n}}$, it requires a different ansatz to close the gap between the factor at hand, $1 + \sqrt{x_{0,n}}$, and the one to the power $2j+1-|\alpha|$. As mentioned previously, $x_n \sim x_{0,n}$ for all $x\in B_2(x_0)$, and thus
\[
(1+\sqrt{x_{0,n}})^{|\alpha|-2j} \, \| \partial_x^\alpha u \|_{L^p(Q_1(x_0),\mu_{jp})} \, \lesssim \, \| x_n^{\,\frac{|\alpha|}{2}} \, \partial_x^\alpha u \|_{L^p((0,1) \times H)} \, \lesssim \, \| f \|_{L^p(Q(x_0))}
\]
if $|\alpha|<4$ by virtue of Lemma \ref{Lp boundedness for Schur operators}. (For $|\alpha|=0,1,4$, there is nothing to prove.) It remains to show that $\nabla u$ has a pointwise bound. Using that $\nabla u$ can be written in terms of the Green function we get
\[
|\nabla u(1,x_0)| \, \leq \, \| \nabla_{\!x} G(1,x_0,\cdot,\cdot) \|_{L^\frac{p}{p-1}((0,1) \times H)} \, \| f \|_{L^p(Q(x_0))}
\]
by H\"older's inequality. To bound the norm containing the Green function, we apply Lemma \ref{certain derivatives of Phi are in Lq} to find
\[
\| \nabla_{\!x} G(1,x_0,\cdot,\cdot) \|_{L^\frac{p}{p-1}((0,1) \times H)} \, \lesssim \, 2^{-\frac{1}{p}} \, (1+\sqrt{x_{0,n}})^{-1} \, |Q_1(x_0)|^{-\frac{1}{p}} \, .
\]
This is possible if $\frac{p}{p-1} < \frac{n+2}{n+1}\,$, that is for $p > n+2$, and the desired bound follows. \\

Now let $u$ be a solution of $\partial_t u + Lu=f_2$ with vanishing initial value. We apply Lemma \ref{pointwise estimate against f with support f outside of a cylinder} to obtain
\[
\| \partial_t^l \partial_x^\alpha u \|_{L^p(Q_{1}(x_0),\mu_{jq})} \, \lesssim \, (1+\sqrt{x_{0,n}})^{2j+1-|\alpha|} \, |Q_1(x_0)|^\frac{1}{q} \, \| f \|_{Y_p} \, ,
\]
if $|\alpha| \geq 2j$. Given $(j,l,\alpha) \in \mathcal{CZ}$, this last condition is always true. In addition, with $|\alpha| = 1$ and $j=l=0$, Lemma \ref{pointwise estimate against f with support f outside of a cylinder} yields
\[
|\nabla u(t,x)| \, \lesssim \, \| f \|_{Y_p}
\]
for all $(t,x) \in Q_{1}(x_0) = (\frac{1}{2},1] \times B_{1}(x_0)$ and hence, in particular, for $(t,x)=(1,x_0)$. \\

Altogether we have seen that for all $p>n+2$ and $(j,l, \alpha) \in \mathcal{CZ}$ the following two estimates hold:
\begin{equation}
\label{X vs Y for T=1}
\tag{$\ast$}
\| \partial_t^l \partial_x^\alpha u \|_{L^p(Q_{1}(x_0), \mu_{jp})} \, \lesssim \, (1+\sqrt{x_{0,n}})^{2j+1-|\alpha|} \, |Q_1(x_0)|^{\frac{1}{p}} \, \| f \|_{Y_p}
\end{equation}
and
\begin{equation}
\label{gradient u(1) vs Y}
\tag{$\ast\ast$}
|\nabla u(1,x_0)| \, \lesssim \,\| f \|_{Y_p} \, ,
\end{equation}
whenever $f$ is supported in $[0,1] \times \hs$. The general case follows by scaling under the coordinate transformation
\[
T_\lambda: [0,1] \times \hs \ni (t,x) \mapsto (\hat{t},\hat{x}) = (\lambda^2 t,\lambda x) \in [0,T] \times \hs
\]
for $0< \lambda \leq \sqrt{T}$. In this situation $u \circ T_\lambda$ is an energy solution on $[0,\lambda^{-2} T) \times \hs$ to $\lambda^2 (f\circ T_\lambda)$, see (\ref{linear scaling}). Then a calculation, using the geometric properties that have been discussed in section \ref{preliminaries}, implies
\[
\| \partial_{\hat{t}}^l \partial_{\hat{x}}^\alpha u \|_{L^p(Q_{\sqrt{\lambda}}(\hat{x}_0),\mu_{jp})} \, \lesssim \,
\lambda^{\frac{n+2}{p}+j-2l-|\alpha|} \, \| \partial_t^l \partial_x^\alpha (u \circ T_\lambda) \|_{L^p(Q_1(x_0), \, \mu_{jp})} \, .
\]
Applying (\ref{X vs Y for T=1}), combined with the fact that $|Q_1(x_0)| \sim \lambda^{n-2} \, |Q_{\!\sqrt{\lambda}}(\hat{x}_0)|$, the right hand side has the bound
\[
\lambda^{2l+\frac{|\alpha|}{2}-\frac{1}{2}} \, (\sqrt{\lambda} + \sqrt{\hat{x}_{0,n}})^{|\alpha|-2j-1} \, |Q_{\!\sqrt{\lambda}}(\hat{x}_0)|^{-\frac{1}{p}} \, \| \partial_{\hat{t}}^l \partial_{\hat{x}}^\alpha u \|_{L^p(Q_{\!\sqrt{\lambda}}(\hat{x}_0),\mu_{jp})}
\]
which is, due to (\ref{scaling of the Y-norm}), less than a constant times $ \| f \|_{Y_p}$. Now choosing $\lambda = r^2$ and taking the supremum over $r$ and $\hat{x}_0$ gives $\| u \|_{X_p^1} \lesssim \| f \|_{Y_p}$. \\
Likewise,
\[
|\nabla_{\!\hat{x}} u(\lambda^2,\lambda x_0)| \, = \, \lambda^{-1} \, |\nabla_{\!x} (u \circ T_\lambda)(1,x_0)| \, \overset{\text{(\ref{gradient u(1) vs Y})}}{\lesssim} \, \lambda \, \| f \circ T_\lambda \|_{Y_p} \, \lesssim \, \| f \|_{Y_p}
\]
for any $0<\lambda\leq\sqrt{T}$ and for almost every $x_0 \in \hs$. This proves the assertion for $g=0$. \\

Finally, if $u$ is a solution of $\partial_t u + Lu=f$ with $u(0)=g$, we write $u=u_1+u_2$, where $u_1$ solves the same equation with $u(0)=0$ and $u_2$ is a solution of the homogeneous initial value problem. Then 
\[
\| u \|_{X_p} \, \leq \, \| u_1 \|_{X_p} + \| u_2 \|_{X_p} \, \lesssim \, \| f \|_{Y_p} + \| g \|_{\dot{C}^{0,1}}
\]
which follows from the above estimate applied to $u_1$ and Corollary \ref{Lp estimates in the cylinder} applied to $u_2$. This yields the complete statement.
\end{proof}

It remains to provide the proof of the nonlinear estimate.

\begin{proof}[Proof of Lemma \ref{Y-norm vs X-norm}]
The proof requires a careful examination of the inhomogeneity coupled with the definition of the considered function spaces $Y_p$ and $X_p$. \\

First notice that
\begin{equation}
\label{Y-factor boundedness}
\tag{$\ast$}
R^3 \, (R+\sqrt{x_n})^{-1} \, \leq \, R^{\,4l+|\alpha|-1} \, (R+\sqrt{x_n})^{|\alpha|-2j-1}
\end{equation}
for any $(j, l, \alpha) \in \mathcal{CZ}$ such that the factor in the $Y_p$-norm can be bounded by each of the factors appearing in the $X_p^1$-norm. Now from Lemma \ref{nonlinearity} we know that the inhomogeneity can be written in the form
\[
f[u] \, = \, f_{0}[u] + x_n \, f_1[u] + x_n^{\,2} \, f_2[u]
\]
with
\[
\begin{split}
f_{0}[u] \, &= \, f_0^1(\nabla u) \star \nabla u \star D_{\!x}^2 u \, , \\
f_{1}[u] \, &= \, f_1^1(\nabla u) \star \nabla u \star D_{\!x}^3 u + f_1^2(\nabla u) \star P_2(D_{\!x}^2 u) \quad \text{and} \\
f_{2}[u] \, &= \, f_2^1(\nabla u) \star \nabla u \star D_{\!x}^4 u + f_2^2(\nabla u) \star D_{\!x}^2 u \star D_{\!x}^3 u + f_2^3(\nabla u) \star P_3(D_{\!x}^2 u) \, .
\end{split}
\]
By Remark \ref{decomposition of f_i^k} and the assumption $u \in \bar{B}_\eps^X$ with $0< \eps < \frac{1}{2}$, we have $|1+\partial_{x_n} u|^{-m} \leq (1- \eps)^{-m} < 2^m$ and hence
\begin{equation}
\label{f_i^k-boundedness}
\tag{$\ast\ast$}
\| f_i^k(\nabla u) \|_{L^{\infty}(I \times H)} \, < \, c(\eps)
\end{equation}
for every $i=0,1,2$ and $1\leq k\leq i+1$. \\

Moreover, since polynomial maps are analytic functions, we can expand each of the $f_i^k(\nabla u)$ into a power series that converges in $Y_p$ for every $u \in \bar{B}_\eps^X$. \\

We now consider each of the summands in the expression of $f[u]$ separately. For $f_i^1$ we observe that
\[
\| f_i^1(\nabla u) \nabla u D_{\!x}^{|\alpha|} u \|_{L^p(Q_{R}(x),\mu_{jp})} \, \leq \, \| f_i^k(\nabla u) \|_{L^{\infty}(I \times H)} \, \| \nabla u \|_{L^{\infty}(I \times H)} \, \| D_{\!x}^{|\alpha|} u \|_{L^p(Q_{R}(x),\mu_{jp})} \, ,
\]
where $j=0,1,2$ and $|\alpha|=j+2$. Using the inequalities obtained in (\ref{Y-factor boundedness}) and (\ref{f_i^k-boundedness}), we arrive at the estimate
\[
|Q_R(x)|^{-\frac{1}{p}} \, R^3 \, (R+\sqrt{x_n})^{-1} \, \| x_n^{\,j} \, f_i^1(\nabla u) \nabla u \, D_{\!x}^{|\alpha|} u \|_{L^p(Q_{R}(x))} \, \lesssim \, \| u \|_{X_p}^2 \, .
\]
The other parts of $f[u]$ can be estimated by means of H\"older's inequality and a localized version of the weighted Gagliardo-Nirenberg interpolation inequality: Let $j, k$ be any integers satisfying $0 \leq \theta = \frac{j}{k} < 1$, and $r,p,q, \sigma$ be any positive numbers with $\sigma > 0$ and $1\leq r, q, p \leq \infty$, respectively. Then
\[
\| D_{\!x}^j u \|_{L^r(\mu_\sigma)} \, \leq \, c(n,r) \, \| u \|_{L^q(\mu_\sigma)}^{1-\theta} \, \| D_{\!x}^{k} u \|_{L^p(\mu_\sigma)}^\theta \, ,
\]
where $\frac{1}{r} = \frac{1-\theta}{q} + \frac{\theta}{p}$. \\

Next we address the second part of the lemma, namely the one that includes the estimate for the difference $f[u_1]-f[u_2]$ for $u_1, u_2 \in \bar{B}_\eps^X$ with $0< \eps < \frac{1}{2}$. Expressed as a telescoping sum, this reads
\[
f_0[u_1] - f_0[u_2] \, = \,
\bigl( f_0^1(\nabla u_1) - f_0^1(\nabla u_2) \bigr) \star \nabla u_1 \star D_{\!x}^2 u_1 \, +
\] 
\[
+ \, f_0^1(\nabla u_2) \star \nabla(u_1 - u_2 ) \star D_{\!x}^2 u_1 + f_0^1(\nabla u_2) \star \nabla u_2 \star D_{\!x}^2(u_1 - u_2) \, ,
\]
and similarly we rewrite all the other terms except the last one. Here we have the following decomposition
\[
f_2^3(\nabla u_1) \star P_3(D_{\!x}^2 u_1) - f_{2}^3(\nabla u_2) \star P_3(D_{\!x}^2 u_2)
\]
\[ 
= \, \bigl( f_2^3(\nabla u_1) - f_2^3(\nabla u_2) \bigr) \star P_3(D_{\!x}^2 u_1) + f_2^3(\nabla u_2) \star \sum_{i=0}^2 P_{2-i}(D_{\!x}^2 u_1) \star D_{\!x}^2(u_1 - u_2) \star P_i(D_{\!x}^2 u_2) \, .
\]
But now we can apply the first part of the proof to each of these summands combined with the identity
\[
(1+\partial_{x_n} u_1)^{-m} - ( 1 + \partial_{x_n} u_2)^{-m}
\]
\[
= \, \frac{(\partial_{x_n} u_2 - \partial_{x_n} u_1)}{( 1 + \partial_{x_n} u_1)^m \, ( 1 + \partial_{x_n} u_2)^m} \; \sum_{i=1}^m \, \sum_{k=0}^{i-1} \binom{m}{i} \, (\partial_{x_n} u_1)^{i-1-k} \, (\partial_{x_n} u_2)^k
\]
to finish the proof of the lemma.
\end{proof}

%%%%%%%%%%%%%%%%%%%%%%%%%%%%%%%%%%%%%%%%%%%%%%%%%%%%%%%%%%%%

\bigskip

\addsubsection*{Acknowledgments} The present work is based on the author's PhD thesis. He wishes to thank his doctoral advisor Herbert Koch for the introduction to the different theories that fit together so beautifully in this field, and for all the guidance and patience. This paper also benefited from helpful conversations with Clemens Kienzler and Felix Otto who provided valuable input to the writing. The author acknowledges support from the Bonn International Graduate School in Mathematics (BIGS), and the DFG through the Collaborative Research Center (SFB) 611 \emph{Singular Phenomena and Scaling in Mathematical Models}.


\begin{thebibliography}{}

\bibitem{A90} S.\ Angenent, \emph{Nonlinear analytic semiflows}, Proceedings of the Royal Society of Edinburgh: Section A Mathematics, 115(1-2):\ pp.\ 91--107, 1990.

\bibitem{B96a} F.\ Bernis, \emph{Finite speed of propagation and continuity of the interface for thin viscous flows},  Advances in Differential Equations 1(3):\ pp.\ 337--368, 1996.

\bibitem{B96b} F.\ Bernis, \emph{Finite speed of propagation for thin viscous flows when $2\leq n \leq 3$.\emph} Comptes Rendus de l'Acad\'emie des Sciences. S\'erie 1, Math\'ematique 322(12):\ pp.\ 1169--1174, 1996.

\bibitem{BF90} F.\ Bernis and A.\ Friedman, \emph{Higher order nonlinear degenerate parabolic equations}, Journal of Differential Equations, 83(1):\ pp.\ 179--206, 1990.

\bibitem{BP96} A.\ Bertozzi and M.\ Pugh, \emph{The lubrication approximation for thin viscous films:\ regularity and long-time behavior of weak solutions}, Communications on Pure and Applied Mathematics, 49(2):\ pp.\ 85--123, 1996.

\bibitem{BDGG98} M.\ Bertsch, R.\ Dal Passo, H.\ Garcke and G.\ Gr{\"u}n, \emph{The thin viscous flow equation in higher space dimensions}, Advances in Differential Equations, 3(3):\ pp.\ 417--440, 1998.

\bibitem{BGK05} M.\ Bertsch, L.\ Giacomelli and G.\ Karali, \emph{Thin-film equations with ``partial wetting'' energy:\ existence of weak solutions}, Physica D:\ Nonlinear Phenomena, 209(1):\ pp.\ 17--27, 2005.

\bibitem{DGG98} R.\ Dal Passo, H.\ Garcke and G.\ Gr{\"u}n, \emph{On a fourth-order degenerate parabolic equation:\ global entropy estimates, existence, and qualitative behavior of solutions}, SIAM Journal on Mathematical Analysis, 29(2):\ pp.\ 321--342, 1998.

\bibitem{DH98} P.\ Daskalopoulos and R.\ Hamilton, \emph{Regularity of the free boundary for the porous medium equation}, Journal of the American Mathematical Society, 11(4):\ pp.\ 899--966, 1998.

\bibitem{FS86} E.\ Fabes and D.\ Stroock, \emph{A new proof of MoserÕs parabolic Harnack inequality using the old ideas of Nash}, Archive for Rational Mechanics and Analysis, 96(4):\ pp.\ 327--338, 1986.

\bibitem{F84} G.\ Folland, \emph{Real analysis:\ modern techniques and their applications}, John Wiley \& Sons, NY, 1984.

\bibitem{GKO08} L.\ Giacomelli, H.\ Kn{\"u}pfer and F.\ Otto, \emph{Smooth zero-contact-angle solutions to a thin-film equation around the steady state}, Journal of Differential Equations, 245(6):\ pp.\ 1454--1506, 2008.

\bibitem{GGO12} L.\ Giacomelli, M.\ Gnann and F.\ Otto. \emph{Regularity of source-type solutions to the thin-film equation with zero contact angle and mobility exponent between 3/2 and 3}, European Journal of Applied Mathematics:\ pp.\ 1--26, 2012.

\bibitem{G05} G.\ Gr{\"u}n, \emph{Droplet spreading under weak slippage -- existence for the Cauchy problem}, Communications in Partial Differential Equations, 29(11--12):\ pp.\ 1697--1744, 2005.

\bibitem{H03} M.\ Huxley, \emph{Exponential sums and lattice points III}, Proceedings of the London Mathematical Society, 87(3):\ pp.\ 591--609, 2003.

\bibitem{K07} H.\ Kn{\"u}pfer, \emph{Well-posedness for the Navier slip thin-film equation in the case of partial wetting}, Communications on Pure and Applied Mathematics, 64(9):\ pp.\ 1263--1296, 2011.

\bibitem{KM10} H.\ Kn{\"u}pfer and N.\ Masmoudi, \emph{Darcy flow on a plate with prescribed contact angle -- well-posedness and lubrication approximation}, Preprint, 2010.

\bibitem{K99} H.\ Koch, \emph{Non-Euclidean singular integrals and the porous medium equation}, Habilitation thesis, Universit{\"a}t Heidelberg, 1999.

\bibitem{K04} H.\ Koch, \emph{Partial differential equations and singular integrals}, Dispersive Nonlinear Problems in Mathematical Physics, 15:\ pp.\ 59--122, 2004.

\bibitem{KL12} H.\ Koch and T.\ Lamm, \emph{Geometric flows with rough initial data}, Asian Journal of Mathematics, 16(2):\ pp.\ 209--235, 2012.

%\bibitem{KT01} H.\ Koch and D.\ Tataru, \emph{Well-posedness for the NavierÐStokes equations}, Advances in Mathematics, 157(1):\ pp.\ 22--35, 2001.

\bibitem{K85} A.\ Kufner, \emph{Weighted Sobolev spaces}, John Wiley \& Sons, NY, 1985.

\bibitem{ODB97} A.\ Oron, S.\ H.\ Davis and S.\ G.\ Bankoff, \emph{Long-scale evolution of thin liquid films}, Reviews of Modern Physics, 69(3):\ pp.\ 931--980, 1997.

\bibitem{O98} F.\ Otto, \emph{Lubrication approximation with prescribed nonzero contact angle}, Communications in Partial Differential Equations, 23(11--12):\ pp.\ 63--103, 1998.

\bibitem{O01} F.\ Otto, \emph{The geometry of dissipative evolution equations: the porous medium equation}, Communications in Partial Differential Equations, 26(1--2), pp.\ 101--174, 2001.

\end{thebibliography}
\end{document}